\numberwithin{equation}{section}
\newtheorem{thm}[equation]{Theorem}
\newtheorem{lemma}[equation]{Lemma}
\newtheorem{prop}[equation]{Proposition}
\newtheorem{cor}[equation]{Corollary}
\newtheorem{proposition}[equation]{Proposition}
\newcommand{\FLEX}{\relax}
\newcommand{\flex}[1]{\renewcommand{\FLEX}{#1}}
\newtheorem{flexthm}[equation]{\FLEX}
\newenvironment{dremark}[1]{\refstepcounter{equation}
\vskip 5pt \par\noindent {\bf #1\ \thethm .}}{\vskip 5pt \par}
\newenvironment{dremark*}[1]{\vskip 5pt \par\noindent {\bf #1.}}{\vskip 5pt \par}
\theoremstyle{definition}
\newtheorem{definition}[equation]{Definition}
\theoremstyle{remark}
\newtheorem{remark}[equation]{Remark}
\newtheorem{example}[equation]{Example}
\newcommand{\dstext}[1]{\quad\text{#1}\quad}
\newcommand{\innerprod}[1]{\left\langle #1\right\rangle}
\newcommand{\aut}{\operatorname{Aut}}
\newcommand{\coact}{\nu}
\def\go{G^{(0)}}
\newcommand{\unit}[1]{#1^{(0)}}
\newcommand{\norm}[1]{\left\|{#1}\right\|}
\def\cs{C^{\ast}}
\DeclareMathOperator{\supp}{supp}
\DeclareMathOperator{\spn}{span}
\DeclareMathOperator{\ran}{ran}
\DeclareMathOperator{\dom}{dom}
\DeclareMathOperator{\id}{id}
\DeclareMathOperator{\gr}{gr}
\newcommand{\N}{\mathbb{N}}
\newcommand{\Z}{\mathbb{Z}}
\newcommand{\inv}{^{-1}}
\newcommand{\C}{\mathbb{C}}
\newcommand{\T}{\mathbb{T}}
\newcommand{\Rho}{\mathcal{P}}
\newcommand{\ol}{\overline}
\newcommand{\sE}{\mathcal{E}}
\newcommand{\sG}{\mathcal{G}}
\newcommand{\sH}{\mathcal{H}}
\newcommand{\fS}{\mathfrak{S}}
\newcommand{\fs}{\mathfrak{s}}
\newcommand{\fr}{\mathfrak{r}}
\newcommand{\bbC}{{\mathbb{C}}}
\newcommand{\bbN}{{\mathbb{N}}}
\newcommand{\bbT}{{\mathbb{T}}}
\newcommand{\eps}{\varepsilon}
\newcommand{\<}{\langle}
\renewcommand{\>}{\rangle}
\newcommand{\pair}{(\Sigma; G)}
\newcommand{\cpair}{C^*_r(\Sigma; G)}
\newcommand{\ccpair}{C_c(\Sigma; G)}
\newcommand{\gc}{$\Gamma$-Cartan} 
\newcommand{\cspn}{\ol{\spn}}
\newcommand{\dual}[1]{#1^\#}
\newcommand{\cstar}{\hbox{$C^*$}}
\newcommand{\cstaralg}{$C^*$-algebra}
\title[Graded $\cs$-algebras and twists]{Graded $\cs$-algebras and twisted groupoid $\cs$-algebras}
\author[J.H. Brown]{Jonathan H. Brown}
\address[J.H. Brown]{
Department of Mathematics\\
University of Dayton\\
300 College Park Dayton\\
OH 45469-2316 U.S.A.} \email{jonathan.henry.brown@gmail.com}
\author[A.H. Fuller]{Adam H. Fuller}
\address[A.H. Fuller]{
Department of Mathematics\\
Ohio University\\
Athens\\
OH 45701 U.S.A.}
\email{fullera@ohio.edu}
\author[D.R. Pitts]{David R. Pitts}
 \address[D.R. Pitts]{
  Department of Mathematics\\
  University of Nebraska-Lincoln\\
  Lincoln\\
  NE 68588-0130 U.S.A.}  \email{dpitts2@math.unl.edu}
\author[S.A.  Reznikoff]{Sarah A. Reznikoff}
\address[S. A. Reznikoff]{
Department of Mathematics\\
Kansas State University\\
138 Cardwell Hall\\
Manhattan, KS, U.S.A. }
\email{sarahrez@math.ksu.edu}
\begin{document}

\begin{abstract}	
  Let $\cs$-algebra that is acted upon by a compact abelian
  group.  We show that if the fixed-point algebra of the action
  contains a Cartan subalgebra $D$ satisfying an appropriate
  regularity condition, then $A$ is the reduced $\cs$-algebra of a
  groupoid twist.  We further show that the embedding
  $D \hookrightarrow A$ is uniquely determined by the twist.  These
  results generalize Renault's results on Cartan subalgebras of
  $\cs$-algebras.
\end{abstract}
\maketitle

\section{Introduction}

Abelian operator algebras are well understood: abelian $C^*$-algebras
are all isomorphic to spaces of continuous functions on a locally compact
Hausdorff space; abelian von Neumann algebras are all isomorphic to
$L^\infty$-spaces.  The study of non-abelian operator algebras is
often aided by the presence of appropriate abelian subalgebras.  This idea was
exemplified by Feldman and Moore's characterization of von Neumann
algebras containing Cartan subalgebras in 1977 \cite{FM77}.  Cartan
embeddings arise naturally in many examples, including finite
dimensional von Neumann algebras and von Neumann algebras constructed
from free actions of discrete groups on abelian von Neumann algebras.
Feldman and Moore \cite{FM77} gave a complete classification of Cartan
subalgebras in terms of measured equivalence relations.

To transfer Feldman and Moore's theory to the topological setting,
Renault \cite{Ren08} defined Cartan subalgebras for $C^*$-algebras.

\begin{definition}\cite[Definition~5.1]{Ren08} Let $A$ be a
$C^*$-algebra.  A maximal abelian $C^*$-algebra $D \subseteq A$ is a
\emph{Cartan subalgebra} of $A$ if
\begin{enumerate}
\item there exists a faithful conditional expectation $E \colon A
\rightarrow D$;
\item $D$ contains an approximate unit for $A$;
\item the set of normalizers of $D$, i.e.~the $n\in A$ such that
$nDn^*\in D$ and $n^*Dn \in D$, generate $A$ as a $C^*$-algebra.
\end{enumerate} When $D$ is a Cartan subalgebra of $A$, we call
$(A,D)$ a \emph{Cartan pair}.
\end{definition} Renault \cite{Ren08}, building on work by Kumjian
\cite{Kum86}, showed that there is a one-to-one correspondence between
Cartan pairs of separable $C^*$-algebras and $C^*$-algebras of second countable twisted groupoids; that is, between
Cartan pairs and the reduced $C^*$-algebra generated by an extension
of groupoids \[\T \times \go  \rightarrow \Sigma \rightarrow
G.\] In Renault's result, $G$ must be topologically principal: Renault refers to $G$ as the Weyl groupoid of the Cartan pair.  It is
reasonable to seek a larger class of inclusions $D\subseteq
A$ with $D$ abelian that can be used to construct twists.

This idea has recently successfully been pursued by several authors,
and larger classes of inclusions have been shown to arise as \cstaralg
s of twists.  In particular, motivated by shift spaces and the work by
Matsumoto and Matui \cite{MM14, Mat12, Mat13}, Brownlowe, Carlsen, and
Whittaker \cite{BCW17} were able to construct a Weyl type groupoid
from a general graph $C^*$-algebra and its canonical diagonal and use
this construction to show that diagonal-preserving isomorphisms of
these inclusions come precisely from isomorphisms of Weyl type
groupoids.  This led to work proving similar results for Leavitt path
algebras \cite{BCH} and Steinberg algebras \cite{ABHS16}.

The paper \cite{ABHS16} in particular inspired this work.  Steinberg
algebras are algebraic analogues of groupoid $C^*$-algebras
\cite{Steinberg, CFST}.  In \cite{ABHS16}, the authors consider
Steinberg algebras associated to groupoids $G$ equipped with a
homomorphism $c: G\to \Gamma$ where $\Gamma$ is an abelian group and
$c\inv(0)$ is topologically principal.  The Steinberg algebra is then
naturally graded by $\Gamma$; the authors use this grading to
reconstruct $G$.  It is well known that algebras graded by an abelian
group $\Gamma$ correspond in the $C^*$-algebraic theory to
$C^*$-algebras endowed with a $\hat\Gamma$ action (for example see
\cite{Tom07}, \cite{Rae18}).

In this paper, we construct groupoids
from inclusions of an abelian $C^*$-algebra $D$ into a $C^*$-algebra
$A$ endowed with the action of a compact abelian group.  
In particular, the aim of our work is to generalize Renault's
characterization of Cartan pairs by reduced $C^*$-algebras of twisted
groupoids.  Our results apply to examples appearing naturally in the
study of higher-rank graph and twisted higher-rank graph
$C^*$-algebras (See Example~\ref{THRG} below).

We start with a $C^*$-algebra $A$ and a discrete abelian countable
group $\Gamma$ such that the dual group $\hat{\Gamma}$ acts
continuously on $A$ by automorphisms.  
Let $A^{\hat{\Gamma}}$ be the points in $A$ fixed by the action of
$\hat{\Gamma}$: this is a subalgebra of $A$  called the fixed point
algebra.  Assume $A^{\hat{\Gamma}}$ contains a Cartan subalgebra $D$.
If in addition the normalizers of $D$
in $A$ densely span $A$ we call $(A,D)$ a \emph{\gc\ pair}.
We note that the normalizers of $D$ in $A^{\hat{\Gamma}}$ are homogeneous of
degree $0$.  In particular, if the action by $\hat{\Gamma}$ is
trivial, then $(A,D)$ is a Cartan pair.

If $(A,D)$ is a \gc\ pair, then following Kumjian's construction, we
show how to 
create a twisted groupoid $\pair$ that is graded by $\Gamma$.
This yields the following commutative diagram
\begin{equation}\label{introdiag} \xymatrix{ \T\times \go\ar[r]^{\iota} &\Sigma
\ar[dr]_{c_{_\Sigma}} \ar[r]^q & G \ar[d]^{c_{_G}} \\
& & \Gamma& }
\end{equation}
where $c_\Sigma$ and $c_G$ are homomorphisms.  We
prove in Theorem~\ref{mainthm1} that there is a natural isomorphism
between $(A,D)$ and the reduced crossed product
$(C_r^*(\Sigma;G),C_0(G^{(0)}))$.

Next, if $\Sigma\to G$ is a twist satisfying the  commutative
diagram~\eqref{introdiag}, we show
that the inclusion
$C_0(\go)\hookrightarrow C^*_r(\Sigma;G)$ satisfies our hypotheses, so
Theorem~\ref{mainthm1} allows us to construct a new twist from this
inclusion.  The natural question is: does our construction in
Theorem~\ref{mainthm1} recover $\Sigma\to G$?  We answer this
affirmatively in Theorem~\ref{mainthm2}.  This second question is the
main focus of \cite{BCW17}, \cite{BCH} \cite{ABHS16}, and \cite{CRST}
in the case that the twist is trivial.

The paper~\cite{CRST} by Carlsen, Ruiz, Sims and Tomforde is similar
in scope to our present work.  While~\cite{CRST} is also concerned
with translating the results of \cite{ABHS16} to a $C^*$-algebraic
framework, their work avoids twists altogether, instead focusing on
showing rigidity results along the lines of our
Theorem~\ref{mainthm2}.  The results of~\cite{CRST} apply to
$C^*$-algebras already known to arise from groupoids, however it does
contain some remarkable innovations which allows the authors to address
$C^*$-algebras endowed with co-actions of a possibly nonabelian group. Furthermore,
Carlsen, Ruiz, Sims and Tomforde relax the requirement that the
abelian subalgebra $D$ must be Cartan in the fixed point algebra.

Whether or not a $\cs$-algebra satisfies the Universal Coefficient Theorem (UCT) remains the main stumbling block in the classification program for simple nuclear $\cs$-algebras. 
Indeed, Tikuisis, White and Winter \cite{TWW} have shown that all separable, unital, simple, nuclear $\cs$-algebras with finite nuclear dimension that satisfying the UCT are classifiable.  Recent results of Barlak and Li \cite{BarlakLi} show that if $A$ is a nuclear $\cs$-algebra containing a Cartan subalgebra, then $A$ satisfies the UCT.
We discuss in Example~\ref{ex: BL} how their results also apply to our setting.


This paper is organized as follows.  We begin with preliminaries
on twists (Section~\ref{prelim}). In Section~\ref{gamma Cartan} we define $\Gamma$-Cartan pairs and review the relationship between topological grading and strong group actions.

In Section~\ref{RI} we prove our
main theorem, which shows that a large class of $C^*$-algebras are
isomorphic to the reduced $C^*$-algebra of a twist. In Section~\ref{CoT} we then provide a
few basic results concerning a natural \gc\ pair that arises in the
presence of a twist.  In Section~\ref{RD} we
prove our rigidity result, Theorem~\ref{mainthm2}, which shows that if
the inclusion in the previous section comes from a twist then our
construction recovers the twist.

Section~\ref{ex} gives some examples to which our theorems
apply. Notably, in Example~\ref{THRG} we show that the twisted
higher-rank graph $\cs$-algebras introduced in \cite{KPS12} and
\cite{KPS15} give examples of \gc\ pairs.  Moreover, the groupoid
description of twisted higher-rank graph $\cs$-algebras given in
\cite{KPS15} yields groupoids isomorphic to ours.

Finally, in an appendix, we describe how we can obtain the results of Section~\ref{RI} by using a coaction of a non-abelian group (instead of an action of an abelian group); note that in this case the grading on the $C^*$-algebra is by the group itself, rather than by its dual.
The authors thank John Quigg for pointing out this alternative construction.

\subsection*{Acknowledgments} Whilst conducting this research JHB and
AHF made use of meeting space made available by the Columbus
Metropolitan Library.  We would like to thank CML for their important
work in the community.  AHF would like to thank Christopher Schafhauser for patiently
answering his questions on \cite{BarlakLi}.

This work was partially supported by grants from the Simons Foundation (\#316952 to David Pitts and  \#360563 to Sarah Reznikoff) and by the American Institute of Mathematics SQuaREs Program.

\section{Preliminaries}\label{prelim}

\subsection{\'Etale groupoids} A groupoid $G$ is a small category in which every morphism has an inverse.  The unit space $\go$ of $G$ is
the set of identity morphisms.  The maps $s,r:G\to \go$, given by $s(\gamma)=
\gamma^{-1}\gamma$ and $r(\gamma)= \gamma\gamma^{-1}$, are the
\textit{source} and \textit{range} maps.   For $S,T\subseteq G$ we denote
\[ST:=\{\gamma\eta: \gamma\in S, \eta\in T, r(\eta)=s(\gamma)\}.\] If
either $S$ or $T$ is the singleton set $\{\gamma\}$ we remove the set
brackets from the notation and write $S\gamma$ or $\gamma T$.

A topological groupoid is a groupoid $G$ endowed with a topology such
that inversion and composition are continuous.  An open set $B\subseteq
G$ is a {\it bisection} if $r(B)$ and $s(B)$ are open and $r|_B$ and
$s|_B$ are homeomorphisms onto their images.  The groupoid $G$ is
\textit{\'etale} if there is a basis for the topology on $G$
consisting of bisections.  When $G$ is \'etale then $\go$ is
open and closed in $G$.

For $x\in \unit{G}$, the \textit{isotropy group at $x$} is
$xGx:=\{\gamma\in G: r(\gamma)=s(\gamma)=x\}$ and the
\textit{isotropy subgroupoid} is the set $G'=\{\gamma\in G:
r(\gamma)=s(\gamma)\}$.  A topological groupoid $G$ is {\it
topologically principal} if $\{x\in \unit{G}: xGx=\{x\}\}$ is dense
in $\unit{G}$; it is {\em effective} if the interior of $G'$ is $\go$.   If $G$ is second countable these notions coincide \cite[Lemma~3.1]{BCFS}, but in the general (not necessarily second countable) case, effective is the more useful notion.

 Unless explicitly stated otherwise, for the remainder of this paper,  we make the following assumptions.
\begin{dremark*}{Standing Assumptions on Groupoids}
  \textit{Throughout, all groupoids are:\begin{enumerate}
    \item locally compact and \item
      Hausdorff. 
\end{enumerate}
}

\end{dremark*}
\subsection{Twists} The main focus of this paper is on twists and
their $\cs$-algebras.  We provide a brief account of the necessary
background here.  Much of this background can also be found in
\cite{Ren08}.   We also encourage the reader to consult the recent
expository article by Sims~\cite{SimsHaEtGrThC*Al}.
We now expand on a few details that are
particularly relevant to our context.

A twist is the analog of a central extension of a discrete group by
the circle $\bbT$.  Here is the formal definition.

\begin{definition}[see~{\cite[Definition~5.1.1]{SimsHaEtGrThC*Al}}]\label{twistdef} Let
  $\Sigma$ and $G$ be topological groupoids with $G$ \'etale, and let
  $\bbT\times \unit{G}$ be the product groupoid. That is,
  $(z_1,x_1)(z_2,x_2)$ is defined if and only if $x_1=x_2$, in which
  case the product is given by  $(z_1,x_1)(z_2,x_2)=(z_1z_2,x_1)$;
  inversion is $(z,x)^{-1}=(z^{-1},x)$, and the
  topology is the product topology.  The unit space of
  $\bbT\times\unit{G}$ is $\{1\}\times \unit{G}$.
  
  The pair
  $(\Sigma,G)$ is a \textit{twist} if there is an exact sequence
  \[\bbT\times\unit{G}\overset{\iota}\rightarrow
    \Sigma\overset{q}\rightarrow G\] where
  \begin{enumerate}
    \item $\iota$ and $q$ are continuous groupoid homomorphisms with
      $\iota$ one-to-one and $q$ onto;
      \item $\iota|_{\{1\}\times \unit{G}}$ and $q|_{\unit{\Sigma}}$
        are homeomorphisms onto $\unit{\Sigma}$ and $\unit{G}$,
        respectively (identify $\unit{\Sigma}$ and $\unit{G}$ using $q$);
      \item  $q^{-1}(\unit{G})=\iota(\bbT\times
        \unit{G})$;
       \item  for every $\gamma\in \Sigma$ and $z\in\bbT$, 
         $\iota(z,r(\gamma))\gamma =\gamma \iota(z,s(\gamma))$; and
         \item   for every $g\in G$ there is an open bisection $U$
           with $g\in U$ and a
           continuous 
           function $\phi_U: U\rightarrow \Sigma$ such that $q\circ
           \phi_U=\id|_U$ and the map $\bbT\times U\ni (z\times h)\mapsto
           \iota(z,r(h))\, \phi_U(h)$ is a homeomorphism of $\bbT\times U$
           onto $q^{-1}(U)$.
       \end{enumerate}
       (Conditions (1--3) say the sequence is an extension, (4) says
       the extension is central, and (5) says $G$ is \'etale and
       the extension is locally trivial.)
       A twist is often denoted simply by $\Sigma \rightarrow G$.

       For $z\in\bbT$ and $\gamma\in\Sigma$ we will write
       \[z\cdot \gamma:=\iota(z,r(\gamma))\gamma\dstext{and} \gamma
         \cdot z:=\gamma\iota(z,s(\gamma))\] for the action of $\bbT$
       on $\Sigma$ arising from the embedding of $\bbT\times \unit{G}$
       into $\Sigma$. Notice that this action of $\bbT$ on $\Sigma$ is free.

Also, for $\gamma\in \Sigma$, we will often denote $q(\gamma)$
by $\dot \gamma$; indeed, we use the name $\dot\gamma$ for an
arbitrary element of $G$.
       
     \end{definition}

\begin{remark}\label{qmap}
  By~\cite[Exercise~9K(3)]{WillardGeTo} the map
  $q:\Sigma\rightarrow G$ is a quotient map.
\end{remark}


The $C^*$-algebra of the twist is constructed from the completion of
an appropriate function algebra $C_c(\Sigma;G)$.  This algebra can be
constructed in two different ways and both will be used in this note.

\subsubsection*{First description of $C_c(\Sigma;G)$: Sections of a
  line bundle}
The first way to construct $C_c(\Sigma;G)$ is by considering sections
of a complex line bundle $L$ over $G$.  Define $L$ to be the quotient
of $\bbC\times \Sigma$ by the 
equivalence relation on $\bbC\times \Sigma$ given by 
$(\lambda,\gamma)\sim (\lambda_1,\gamma_1)$ if and only if there
exists $z\in\bbT$ such that
$(\lambda_1,\gamma_1)=(\overline{z}\lambda, z\cdot \gamma)$.   We
sometimes write $L=(\bbC\times \Sigma)/\bbT$.  Use
$[\lambda, \gamma]$ to denote the equivalence class of $(\lambda,
\gamma)$.  Observe that for any $z\in\bbT$,
\begin{equation}\label{Tact}
  [\lambda, z\cdot \gamma]=[z\lambda, \gamma].
\end{equation}
With the quotient
topology, $L$ is Hausdorff.  The (continuous) surjection $P:L\rightarrow G$ is given
by
\[ P:[\lambda, \gamma] \mapsto \dot \gamma.
\]
For  $\dot\gamma\in G$ and
$\gamma_0\in q^{-1}(\dot\gamma)$, the map $\bbC\ni\lambda\mapsto
[\lambda, \gamma_0]\in P^{-1}(\dot\gamma)$ is a homeomorphism, so $L$ is a complex line bundle over $G$.  In
general, there is no canonical choice of $\gamma_0$.  However, when
$\dot\gamma\in \unit{G}$, $\unit{\Sigma}\cap q^{-1}(\dot\gamma)$ is a
singleton set,  so there is a canonical choice:
take $\gamma_0$ to be the unique element of 
$\unit{\Sigma}\cap q^{-1}(\dot\gamma)$.  Thus, recalling that
$\unit{\Sigma}$ and $\unit{G}$ have been previously identified (using
$q|_{\unit{\Sigma}}$), when $x\in\unit{G}=\unit{\Sigma}$, we
sometimes identify $P^{-1}(x)$ with $\bbC$ via the map $\lambda\mapsto
[\lambda, x]=\lambda\cdot [1,x]$.

Finally, there is
a  continuous map $\varpi: L\rightarrow [0,\infty)$ given by \[\varpi([\lambda,
\gamma]):=|\lambda|.\] 
When $f: G\rightarrow L$ is a section and $\dot\gamma\in G$, we will sometimes write
$|f(\dot\gamma)|$ instead of $\varpi(f(\dot\gamma))$.

Since $\Sigma$ is locally trivial, $L$ is locally trivial as well.
Indeed, given $\ell\in L$, let $B$ be an open bisection of $G$
containing $P(\ell)$.  Let $\phi_B:B\rightarrow \Sigma$ be
a continuous function satisfying the conditions of 
Definition~\ref{twistdef}(5).  Then for every element  $\ell_1\in
P^{-1}(B)$, there exist unique  $\lambda\in\bbC$ and $\dot\gamma\in B$
so that  $\ell_1=[\lambda, \phi_B(\dot\gamma)]$.  It follows that  the map
$[\lambda,\phi_B(\dot\gamma)]\mapsto (\lambda, \dot\gamma)$ is a
homeomorphism of $P^{-1}(B)$ onto $\bbC\times B$, so $L$ is locally trivial.

There is a partially defined  multiplication on $L$, given by
\[
[\lambda, \gamma][\lambda',
\gamma']=[\lambda\lambda',\gamma\gamma'],\] whenever $\gamma$ and
$\gamma'$ are composable in $\Sigma$.  When $[\lambda,\gamma],
[\lambda',\gamma']\in L$ satisfy $\dot\gamma=\dot\gamma'$, let
\begin{equation}\label{adds}
  [\lambda,\gamma]+[\lambda',\gamma']:=[\lambda +z\lambda',\gamma],
\end{equation}
where $z$ is the unique element of $\bbT$ so that $\gamma'=z\cdot\gamma$.  There is also an involution on
$L$ given by
\begin{equation}\label{invsg}
  \overline{[\lambda,\gamma]}=[\overline{\lambda},\gamma\inv] .
\end{equation}

We use the symbol $C_c(\Sigma;G)$ to denote the set of ``compactly
supported'' continuous sections of $L$, that is, 
\begin{equation}\label{ccsg1} C_c(\Sigma;G):=\{f:G\rightarrow L \, |\,  f \text{ is continuous, $P\circ
  f=\id|_G$, and 
  $\varpi\circ f$ has compact support}\}.
\end{equation}

\begin{dremark}{Notation} For $f\in C_c(\Sigma; G)$,
we denote the support of $\varpi\circ f$ by $\supp(f)$; we denote
its open support by $\supp'(f)$. Further, let $C(\Sigma;G)$ and
$C_0(\Sigma;G)$ be, respectively, the set of continuous sections and
continuous sections vanishing at infinity of the bundle $L$.  
\end{dremark}

We endow $C_c(\Sigma;G)$ with a $*$-algebra structure where addition is
pointwise (using~\eqref{adds}), multiplication is given by convolution:
\begin{equation}\label{secmul}
  f*g(\dot \gamma)=\sum_{\dot\eta_1\dot\eta_2=\dot\gamma}f(\dot\eta_1)g(\dot\eta_2)=\sum_{r(\dot\eta)=r(\dot\gamma)}
f(\dot\eta)g(\dot\eta\inv \dot \gamma),
\end{equation} and the involution is from~\eqref{invsg}:
\[f^*(\dot \gamma)=\overline{f(\dot\gamma^{-1})}.\]
Note that if $f,g$ are supported on bisections $B_1,B_2$ and $\dot \eta_i\in B_i$ then $f*g(\dot\eta_1\dot\eta_2)=f(\dot\eta_1)g(\dot\eta_2)$.
We can identify $C_0(\go)$ with a subalgebra of continuous sections of
the line bundle $L$ by
\[ C_0(\go)\to C_0(\Sigma;G)\quad\text{by}\quad \phi\mapsto
\left(\dot\gamma\mapsto \begin{cases}
[\phi(\dot\gamma),\iota(1,\dot\gamma)] & \dot\gamma\in \go\\ 0
&\text{otherwise}\end{cases}\right).\] Note that this identification takes
pointwise multiplication on $C_0(\go)$ to the convolution on
$C_c(\Sigma;G)$.

\subsubsection*{Second description of $C_c(\Sigma;G)$: Covariant
  functions}  A function $f$ on $\Sigma$ is  \textit{covariant} if for every $z\in
\bbT$ and $\gamma\in \Sigma$,
\[f(z\cdot\gamma)=\overline{z}\, f(\gamma).\]
The second way to describe $C_c(\Sigma;G)$ is as the set of
compactly supported continuous covariant functions on $\Sigma$, that
is, 
\begin{equation}\label{ccsg2}
 C_c(\Sigma;G):=\{f\in C_c(\Sigma): \forall \gamma \in \Sigma \, \,
\, \forall z\in \T\ \, f(z\cdot\gamma)=\overline{z}f(\gamma)\}.
\end{equation}
Addition is pointwise, the involution is
$f^*(\gamma)=\overline{f(\gamma^{-1})}$, and the  convolution
multiplication is given by
\begin{equation}\label{covmul}
  f*g(\gamma)=\sum_{\substack{\dot\eta \in G \\
r(\dot\eta)=r(\dot\gamma)}} f(\eta)g(\eta\inv \gamma),
\end{equation} where for each $\dot\eta$ with
$r(\dot\eta)=r(\dot\gamma)$, only one representative $\eta$ of
$\dot\eta$ is chosen.  It is easy to verify that this is
well-defined. 

\subsubsection*{Equivalence of the descriptions}
To proceed, we need to be more explicit on how these two descriptions
of $C_c(\Sigma;G)$ are the same.  Take $f\in C_c(\Sigma)$ such that
$f(z\cdot \gamma)=\overline{z} f(\gamma)$ for all $\gamma \in \Sigma$ and
$z \in \T$.  Let $\tilde{f}$ be the section of the line bundle given
by
\[ \tilde{f}(\dot\gamma)=[f(\gamma),\gamma].
\] Note that by the definition of the line bundle, this is
well-defined.

On the other hand, consider a compactly supported continuous section
$\tilde{f}:G\rightarrow L$.  For $\gamma\in\Sigma$, the fact that
$P\circ\tilde f=\id|_G$ yields
$P\left([1,\gamma]^{-1}\tilde f(\dot\gamma)\right)=s(\dot\gamma)$.
Hence there exists $\lambda_\gamma\in\bbC$ such that
$[1,\gamma]^{-1}\tilde f(\dot\gamma)=\lambda_\gamma\cdot
[1,s(\gamma)]$, that is,
\[\tilde f(\dot\gamma)= \lambda_\gamma\cdot [1,
  \gamma]=[\lambda_\gamma,\gamma].\] 
Define $f:\Sigma\rightarrow \bbC$ by 
\[f(\gamma)=\lambda_\gamma.\] Then $f$ is continuous and
compactly supported since $\tilde{f}$ is and  satisfies
\begin{equation}\label{cov}
  f(z\cdot \gamma)=\overline{z}f(\gamma).
\end{equation}
We have thus described a linear
isomorphism between the spaces defining 
   $C_c(\Sigma;G)$ given in \eqref{ccsg1} and \eqref{ccsg2}.   It is
  a routine matter to show this linear map is a $*$-algebra
  isomorphism, so that the two descriptions coincide.  Notice that $\gamma\in \supp(f)$ if and only if $\dot\gamma\in \supp (\tilde{f})$.

\begin{remark}\label{supp covariant} Technically, the support of a function $f:\Sigma\rightarrow \bbC$
 satisfying the covariance condition \eqref{cov} is a subset of $\Sigma$, but~\eqref{cov}
 allows us to regard both $\supp(f)$ and $\supp'(f)$ as subsets of
 $G$.  We shall do this.  Thus the notions of support are the same
 whether $f$ is viewed as a covariant function or as
 a section of the line bundle.
\end{remark}

To define the reduced groupoid $C^*$-algebra, we need to define
regular representations.  For $x \in G^{(0)}$, let $\mathcal{H}_x=\ell^2(\Sigma
x,Gx)$ be the set of square summable sections of the line bundle
$L|_{Gx}$; that is, \[\mathcal{H}_x=\{\chi: Gx\rightarrow  P^{-1}(Gx) \, |\,
 \text{ for $\dot\gamma\in Gx$, } P(\chi(\dot\gamma))=\dot\gamma, \text{
   and } \varpi\circ \chi\in \ell^2(Gx)\}.\]
Given $\chi_1, \chi_2\in \mathcal{H}_x$ and $\dot\gamma
\in Gx$,  $P\left(\overline{\chi_2(\dot\gamma)}\chi_1(\dot\gamma)\right)=x\in
\unit{G}$, so that we obtain a unique
$\lambda_{\dot\gamma}\in\bbC$ so that
\[\overline{\chi_2(\dot\gamma)}\chi_1(\dot\gamma))=\lambda_{\dot\gamma}\cdot
  [1,x].\]  We may therefore define an inner product on $\sH_x$:
$\innerprod{\chi_1,\chi_2}$ is the unique element of $\bbC$ such that
\begin{equation}\label{ipdef}
  \sum_{\dot\gamma\in
    Gx}\overline{\chi_2(\dot\gamma)}\chi_1(\dot\gamma)=\innerprod{\chi_1,\chi_2}\cdot
  [1,x]=[\innerprod{\chi_1,\chi_2},x].
\end{equation}
  
The regular representation of $\ccpair$ on $\mathcal{H}_x$ is then
defined as follows.   For $f\in C_c(\Sigma,G)$ and $\chi\in \sH_x$,
\[
  \left(\pi_x(f)\chi\right)(\dot\gamma)= \sum_{\stackrel{\dot\eta\in
      G, \dot\zeta \in Gx}{\scriptscriptstyle
      \dot\eta\dot\zeta=\dot\gamma}} f(\dot\eta)\chi(\dot\zeta)=
  \sum_{\stackrel{\dot\eta\in G}{\scriptscriptstyle r(\dot\eta)=r(\dot\gamma)}}
  f(\dot\eta)\chi(\dot\eta\inv\dot\gamma) \qquad (\dot\gamma\in Gx).
\]

The reduced \cstaralg\ of $(\Sigma;G)$, denoted $C_r^*(\Sigma;G)$, is the
completion of $\ccpair$ under the norm
$\|f\|_r=\sup_{x\in \go} \|\pi_x(f)\|.$

\begin{remark}\label{altpix}   
 Viewing  $C_c(\Sigma,G)$ as  the space of compactly supported sections of the line bundle affords us an alternative way to describe the regular representations, as follows. Given $x \in G^{(0)}$, define a linear functional $\epsilon_x$   on  $C_c(\Sigma,G)$   by defining $\epsilon_x(f)$ to be the unique scalar  such that $f(x)=[\epsilon_x(f), x]$. Note that for $f \in C_c(\Sigma,G)$, \[\epsilon_x(f^*f)=\sum_{s(\dot{\gamma})=x} \varpi(f(\dot \gamma))^2 \geq 0, \] so $\epsilon_x$ is positive.  Morever, if also $g \in C_c(\Sigma, G)$, then \cite[Proposition~3.10]{Exel08} shows
  there exist a finite number of open bisections $U_1, \dots, U_n$ for
  $G$ such that $\supp(g)\subseteq \bigcup_{j=1}^n U_j$, from which it
  follows that
  $(\eps_x(f^*g^*g f))^{1/2}\leq n\norm{g}_\infty \eps_x(f^*f)^{1/2}$.
  Thus the GNS construction may be applied to $\eps_x$ to produce a
  representation $(\pi_{\eps_x}, \sH_{\eps_x})$ of $C_c(\Sigma;G)$. Letting $L_{\eps_x}$ be the left kernel of $\eps_x$, the map
  $C_c(\Sigma;G)/L_{\eps_x} \ni g+L_{\eps_x}\mapsto g|_{Gx}$ is
  isometric and so determines an isometry
  $W:\sH_{\eps_x}\rightarrow \sH_x$.  As $G$ is \'etale, for
  $\dot\gamma\in Gx$, there is an open bisection $U$ for $G$ with
  $\dot\gamma\in U$, and hence we may find $g\in C_c(\Sigma;G)$
  supported in $U$ with $g(\dot\gamma)\neq 0$.   Thus, if
  $h\in \ell^2(\Sigma x,Gx)$ has finite support, there exists
  $f\in C_c(\Sigma,G)$ with $f|_{Gx}=h$.  This implies that $W$ is
  onto, and a calculation shows that $W\pi_{\eps_x}=\pi_x W$.  This
  shows $\pi_x$ and $\pi_{\eps_x}$ are unitarily equivalent
  representations of $C_c(\Sigma;G)$.  Of course, the same applies
  when $C_c(\Sigma;G)$ is viewed as compactly supported continuous
  covariant functions on $\Sigma$: in this case $\eps_x(f)=f(x)$.
\end{remark}

    For $x\in \unit{G}$, it will be useful to have a fixed orthonormal basis for
      $\sH_x$.   For $\dot\eta\in Gx$,  we select $\delta_{\dot\eta} \in \mathcal{H}_x$ such that

 \[(\varpi\circ\delta_{\dot
  \eta})(\dot\gamma)=
\begin{cases}1&\text{if $\dot\gamma=\dot\eta$}\\ 0&
  \text{otherwise.}\end{cases}\] 
  and insist in particular that $\delta_x(x)=[1,x]$.   Then \[ \{\delta_{\dot \eta}:
\dot\eta\in Gx\}\] is an orthonormal basis for $\mathcal{H}_x$.
In the sequel, we will have occasion to consider the element
$\delta_{\dot\eta}(\dot\eta)\in L$.  By choosing (and fixing) $\eta\in
q^{-1}(\dot\eta)$ there exists a unique $\lambda_{\dot\eta}\in \bbT$
such that 
\begin{equation}\label{onb1}
  \delta_{\dot\eta}(\dot\eta)=[\lambda_{\dot\eta}, \eta].
\end{equation}

It is sometimes useful to informally regard
$\innerprod{\pi_x(f)\delta_{\dot\eta},\delta_{\dot\zeta}}$ as 
a
product of elements of $L$, and we now give a formula which provides
this description.
 For $f\in
C_c(\Sigma;G)$, $x\in \unit{G}$ and $\dot\eta, \dot\zeta\in Gx$, the
definition of $\pi_x(f)$ and the inner product on $\sH_x$ yield
\begin{equation}\label{evalformula0}
\left[\innerprod{\pi_x(f)\delta_{\dot\eta},\delta_{\dot\zeta}},
                              x\right]
=\overline{\delta_{\dot\zeta}(\dot\zeta)}f(\dot\zeta\dot\eta^{-1})
\delta_{\dot\eta}(\dot\eta).
\end{equation} 
In particular,
 $\left[\innerprod{\pi_x(f)\delta_x,\delta_{\dot\zeta}},x\right]
 =\overline{\delta_{\dot\zeta}(\dot\zeta)} f(\dot\zeta).$  Therefore,
 \begin{equation}\label{evalformula}f(\dot\zeta)=\innerprod{\pi_x(f)\delta_x,\delta_{\dot\zeta}}\cdot\delta_{\dot\zeta}(\dot\zeta)\dstext{and}   \left|\innerprod{\pi_x(f)\delta_x,\delta_{\dot\zeta}}\right|=\varpi(f(\dot\zeta)).
 \end{equation}

\begin{example} \label{ctsco} Suppose that $\sigma$ is a normalized continuous
$2$-cocycle on the \'etale groupoid $G$.  This is a continuous
function from the set of composable pairs $G^{(2)}$ into $\bbT$ such
that $\sigma(\gamma, s(\gamma))=1=\sigma(r(\gamma),\gamma)$ and for
all composable triples, $(\gamma_1,\gamma_2,\gamma_3)$,
\[\sigma(\gamma_2,\gamma_3)\overline{\sigma(\gamma_1\gamma_2,\gamma_3)}\sigma(\gamma_1,\gamma_2\gamma_3)\overline{\sigma(\gamma_1,\gamma_2)}=1.\]
Define $\Sigma:=\T\times_\sigma G$, where
$\T\times_\sigma G$ is the Cartesian product of $\T$ and $G$ with the
product topology and multiplication defined by
$(z_1,\gamma_1)(z_2,\gamma_2)=(z_1z_2
\sigma(\gamma_1,\gamma_2),\gamma_1\gamma_2)$.  In this case, $L$ may be
identified with $\C\times G$ by $\phi:[\lambda,(z,\dot\gamma)]\mapsto
(\lambda z,\dot\gamma)$ and we identify sections of $L$
with functions on $G$ by 
\[\tilde f=p_1\circ\phi\circ f \quad \text{where } f\in C_c(G;\Sigma)\] where $p_1$ is the projection onto the first factor.  Now for
compactly supported sections $f,g$ of $L$,
\begin{align*} f*g(\dot\gamma)&=\sum
f(\dot\eta)g(\dot\eta\inv\dot\gamma)=\sum (\tilde f (\dot\eta),\dot\eta)(\tilde g(\dot\eta\inv\dot\gamma),\dot\eta\inv\dot\gamma)\\
&=\sum
[\tilde f (\dot\eta),(1,\dot\eta)][\tilde g(\dot\eta\inv\dot\gamma),(1,\dot\eta\inv\dot\gamma)]\\
&=\sum [\tilde f (\dot\eta)\tilde g(\dot\eta\inv\dot\gamma),
(\sigma(\dot\eta,\dot\eta\inv\dot\gamma),\dot\gamma)]\\ &=\sum
[\tilde f (\dot\eta)\tilde g(\dot\eta\inv\dot\gamma)\,
\sigma(\dot\eta,\dot\eta\inv\dot\gamma),(1,\dot\gamma)]\\
&=\left(\sum\tilde f(\dot\eta)\tilde g(\dot\eta\inv\dot\gamma)\,
\sigma(\dot\eta,\dot\eta\inv\dot\gamma),\dot\gamma)\right).
\end{align*}This last sum is the convolution formula for $\tilde
f,\tilde g$ in $\ccpair$ used by Renault in \cite{ren80}.  In
particular, if $\sigma$ is trivial then we get the usual convolution
formula for \'etale groupoid $C^*$-algebras.
\end{example}

We will use the following proposition to find useful subalgebras of the twisted groupoid $C^*$-algebra.

\begin{lemma}\label{smalltwist} Let $ \bbT\times\unit{G}\overset{\iota}\rightarrow
    \Sigma\overset{q}\rightarrow G$ be a twist and $H$ be an open subgroupoid of $G$.
    Define $\Sigma_H:=q\inv(H)$.  Then
    $$ \bbT\times\unit{H}\overset{\iota|_{\unit{H}}}\rightarrow \Sigma_H\overset{q|_{\Sigma_H}}\rightarrow H$$
    is a twist. 
 Moreover the map $\kappa: C_c(\Sigma_H;H)\hookrightarrow
\ccpair$ defined by extending functions by zero extends to an
inclusion of $C_r^*(\Sigma_H;H)$ into $\cpair$. \end{lemma}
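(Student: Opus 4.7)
My plan has three parts, corresponding to the three claims implicit in the statement: that $\Sigma_H \to H$ is a twist, that $\kappa$ is a well-defined $*$-homomorphism on $C_c$, and that $\kappa$ extends isometrically to the reduced $C^*$-algebras.

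\textbf{Part 1: the twist structure on $\Sigma_H \to H$.} Since $H$ is an open subgroupoid of $G$, $\unit{H}$ is open in $\unit{G}$, and $\Sigma_H = q\inv(H)$ is open in $\Sigma$, hence locally compact and Hausdorff. The restrictions $\iota|_{\bbT \times \unit{H}}$ and $q|_{\Sigma_H}$ are continuous groupoid homomorphisms; injectivity of the first and surjectivity of the second onto $H$ are immediate from the analogous properties for $G$. The identification of unit spaces, the central extension condition, and the equality $(q|_{\Sigma_H})^{-1}(\unit{H}) = \iota(\bbT \times \unit{H})$ all follow from the corresponding properties for $\Sigma \to G$. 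For local triviality, I use that every open bisection of $H$ is also an open bisection of $G$, so the local sections $\phi_U$ from Definition~\ref{twistdef}(5) restrict appropriately.

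\textbf{Part 2: $\kappa$ is a $*$-algebra homomorphism.} Extension by zero respects addition and involution trivially. For convolution, I need to check that for $f,g \in C_c(\Sigma_H;H)$ and $\dot\gamma \in G$, the value $\kappa(f) * \kappa(g)(\dot\gamma)$ computed in $\ccpair$ agrees with the extension of $f*g$. When $\dot\gamma \in H$, the only nonzero terms in $\sum_{\dot\eta} \kappa(f)(\dot\eta)\kappa(g)(\dot\eta\inv\dot\gamma)$ require $\dot\eta \in H$ (and then automatically $\dot\eta\inv\dot\gamma \in H$), reproducing the convolution in $C_c(\Sigma_H;H)$. When $\dot\gamma \notin H$, any nonzero term would force both $\dot\eta$ and $\dot\eta\inv\dot\gamma$ into $H$, whence $\dot\gamma \in H$, a contradiction; so the sum vanishes.

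\textbf{Part 3: isometric extension via orbit decomposition.} This is where I expect the real work. The key structural observation is that for each $x \in \unit{G}$, the set $Gx$ decomposes into $H$-orbits under the left action $\dot\eta\cdot\dot\gamma = \dot\eta\dot\gamma$ (defined when $s(\dot\eta) = r(\dot\gamma)$ and $\dot\eta \in H$), and each orbit gives a $\pi_x^G(\kappa(f))$-invariant subspace of $\sH_x^G$. If the orbit of some $\dot\gamma$ has $r(\dot\gamma) \eqqcolon z \in \unit{H}$, then the orbit is $(Hz)\dot\gamma$, and the map $\delta_{\dot\eta\dot\gamma} \mapsto \delta_{\dot\eta}$ (up to a fixed phase coming from the choice of representatives in~\eqref{onb1}) identifies the restriction of $\pi_x^G(\kappa(f))$ with $\pi_z^H(f)$ on $\sH_z^H$; if $z \notin \unit{H}$, the orbit is $\{\dot\gamma\}$ and $\pi_x^G(\kappa(f))$ acts as zero there since no $\dot\mu \in H$ has $s(\dot\mu) = z$. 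Summing over orbits yields $\|\pi_x^G(\kappa(f))\| \leq \sup_{z \in \unit{H}}\|\pi_z^H(f)\| = \|f\|_r$, so $\|\kappa(f)\|_r \leq \|f\|_r$. Conversely, for $x \in \unit{H}$, the natural embedding $\sH_x^H \hookrightarrow \sH_x^G$ (extension by zero on sections) is isometric and $\pi_x^G(\kappa(f))$ restricts to $\pi_x^H(f)$ on this subspace, giving $\|\kappa(f)\|_r \geq \|f\|_r$.

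\textbf{Expected main obstacle.} The delicate point is verifying carefully that the orbit-wise identification respects the line bundle: the chosen orthonormal basis $\{\delta_{\dot\eta\dot\gamma}\}$ versus $\{\delta_{\dot\eta}\}$ involves the phase ambiguity in~\eqref{onb1}, and I must make sure the matrix coefficients of $\pi_x^G(\kappa(f))$ and $\pi_z^H(f)$ match exactly via~\eqref{evalformula0}. Once this bookkeeping is in place, the isometry of $\kappa$ on $C_c$ is immediate, and $\kappa$ extends to an injective $*$-homomorphism $C^*_r(\Sigma_H;H) \hookrightarrow \cpair$.
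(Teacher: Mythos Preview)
Your proof is correct but takes a genuinely different route in Part~3 from the paper's.  The paper establishes only the inequality $\|h\|_{C^*_r(\Sigma_H;H)} \leq \|\kappa(h)\|_{C^*_r(\Sigma;G)}$ (your easy direction, via the isometry $W_x:\sH_x^H \hookrightarrow \sH_x^G$ coming from the GNS construction for $\eps_x$ and $\eps_x^H:=\eps_x\circ\kappa$).  This inequality shows that $\kappa(h)\mapsto h$ extends to a $*$-epimorphism $\Theta$ from $B:=\overline{\kappa(C_c(\Sigma_H;H))}\subseteq C^*_r(\Sigma;G)$ onto $C^*_r(\Sigma_H;H)$; the paper then proves $\Theta$ is injective (hence isometric) by invoking the faithful conditional expectations $\Delta: C^*_r(\Sigma;G)\to C_0(\unit{G})$ and $\Delta^H: C^*_r(\Sigma_H;H)\to C_0(\unit{H})$ given by restriction to the unit space, observing that $\Delta(b)=\Delta^H(\Theta(b))$ for $b\in B$.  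Your orbit-decomposition argument instead proves the reverse inequality $\|\kappa(f)\|_r \leq \|f\|_r$ directly, decomposing $\pi_x^G(\kappa(f))$ into blocks each unitarily equivalent to some $\pi_z^H(f)$ (or zero).  Your approach is more hands-on and yields finer structural information about how the regular representations of $(\Sigma_H;H)$ sit inside those of $(\Sigma;G)$, at the cost of the line-bundle phase bookkeeping you correctly identify; the paper's approach is shorter and sidesteps that bookkeeping entirely, but relies on the existence of the faithful expectation $\Delta$ as a black box.
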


\begin{proof} That
  $ \bbT\times\unit{H}\overset{\iota|_{\unit{H}}}\rightarrow
  \Sigma_H\overset{q|_{\Sigma_H}}\rightarrow H$ is a twist comes from
  the facts that $q(\iota(\lambda,x))=x$ and $\gamma\in \Sigma_H$ if
  and only if $\dot\gamma\in H$.

View elements of $C_c(\Sigma;G)$ and $C_c(\Sigma_H;H)$ as sections of
line bundles.
  By definition, the respective line
  bundles are $L_{\Sigma_H}=(\bbC\times \Sigma_H)/\T$ and
  $L_\Sigma=(\bbC\times \Sigma)/\T$.  Therefore, 
$L_{\Sigma_H}=L_\Sigma |_H$.  Since $H$ and
  ${\Sigma_H}$ are open, we may define
  $\kappa: C_c({\Sigma_H};H)\hookrightarrow \ccpair$ by extending
  functions by zero.

For each $x\in X$, let $\eps_x$ be defined as in Remark~\ref{altpix}
and let $\eps_x^H:=\eps_x\circ \kappa$.  Then $\eps_x$ and $\eps_x^H$
extend to states on $C_r^*(\Sigma;G)$ and $C_r^*(\Sigma_H;H)$.  Let
$(\pi_x,\sH_x)$ and $(\pi_x^H,\sH_x^H)$ be their associated GNS
representations and let $L_x\subseteq C^*_r(\Sigma;G)$ and
$L_x^H\subseteq C^*_r(\Sigma_H;H)$ be the left kernels of $\eps_x$ and
$\eps_x^H$ respectively.  For $h\in C_c(\Sigma_H;H)$,
$\eps_x^H(h^**h)=\eps_x(\kappa(h)^* *\kappa(h))$, so the map on
$C_c(\Sigma_H;H)$ defined by $(h+L_x^H)\mapsto (\kappa(h)+L_x)$ extends
to an isometry $W_x:\sH_x^H\rightarrow \sH_x$.  A calculation shows
that for $h\in C_c(\Sigma_H;H)$,
$W_x\pi_x^H(h)=\pi_x(\kappa(h)) W_x$ so that 
\[ W_x\pi_x^H(h)W_x^*=\pi_x(\kappa(h)) (W_xW_x^*).\]  Thus, for $h\in C_c(\Sigma_H;H)$, 
\begin{equation}\label{toobig}
  \norm{h}_{C_r^*(\Sigma_H;H)}=\sup_{x\in
    X}\norm{\pi_x^H(h)}=\sup_{x\in
    X}\norm{W_x\pi_x(\kappa(h))W^*_x}\leq
  \sup_x\norm{\pi_x(\kappa(h))}=\norm{\kappa(h)}_{C^*_r(\Sigma;G)}.
\end{equation}

Let $B=\overline{\kappa(C_c(\Sigma_H;H))}$, so $B$ is a
$C^*$-subalgebra of $C_r^*(\Sigma;G)$.  By~\eqref{toobig}, the map
$\kappa(h)\mapsto h$ extends to a
 $*$-epimorphism $\Theta:B\twoheadrightarrow
 C_r^*(\Sigma_H;H)$.

 Now let $\Delta: C_r^*(\Sigma;G)\rightarrow C_0(X)$ be the faithful
 conditional expectation
 determined by $C_c(\Sigma;G)\ni f\mapsto f|_X$; likewise let 
 $\Delta^H: C_r^*(\Sigma_H;H)\rightarrow C_0(X)$ be determined by
 $C_0(\Sigma_H;H)\ni h\mapsto h|_X$.  For $h\in C_0(\Sigma_H;H)$,
 $\Delta(\kappa(h))= \Delta^H(h)$. Therefore, for $b\in
 B$, $\Delta(b)=\Delta^H(\eta(b))$.     So for $b\in B$,
 $\Theta(b^*b)=0$ implies $b=0$ by the faithfulness of $\Delta$.   It
 follows that $\Theta$ is a $*$-isomorphism of $B$ onto
 $C^*_r(\Sigma_H;H)$.  Therefore,
 $\Theta^{-1}$ is a $*$-isomorphism of $C_r^*(\Sigma_H;H)$ onto
 $\overline{\kappa(C_0(\Sigma_H;H))}$, which is what we needed to show. 
\end{proof}

The following proposition allows us to view elements of
$C^*_r(\Sigma;G)$ as functions in $C_0(\Sigma;G)$.  This proposition
was originally proved in the case of Example~\ref{ctsco} above by
Renault in \cite[Proposition II.4.2]{ren80}.  Renault uses it without
proof in the full generality of twists in \cite{Ren08}.  As we know of
no proof of \cite[Proposition II.4.2]{ren80} for twists, we provide a proof here
at the level of generality we will require.
Note that $C_0(\Sigma,G)$ can be made into a Banach space with
$\norm{f}=\sup_{\dot\gamma\in G}\varpi(f(\dot\gamma))$.

\begin{prop}\label{r4.2} Let $(\Sigma; G)$ be a twist with $G$
\'etale.  Then the inclusion map $j: \ccpair \to C_0(\Sigma;G)$ extends to a
norm-decreasing injective linear map of  $C^*_r(\Sigma;G)$ into 
$C_0(\Sigma;G)$.
Moreover, the algebraic operations of adjoint and convolution on
$C_c(\Sigma;G)$ extend to  corresponding operations on
$j(C^*_r(\Sigma;G))$, that is, for every $a, b\in C^*_r(\Sigma;G)$ and
$\dot\gamma\in G$,
\begin{equation}\label{hprop}
  j(a^*)(\dot\gamma)=\overline{j(a)(\dot\gamma^{-1})}\dstext{and}
  j(ab)(\dot\gamma)=\sum_{r(\dot\eta)=r(\dot\gamma)}
j(a)(\dot\eta)\, j(b)(\dot\eta\inv\dot\gamma).
\end{equation}
\end{prop}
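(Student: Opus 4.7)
The plan is to bootstrap from equation~\eqref{evalformula}, which for $f\in\ccpair$ and $\dot\zeta\in G$ with $x=s(\dot\zeta)$ gives the pointwise bound
$$\varpi(f(\dot\zeta))=\left|\innerprod{\pi_x(f)\delta_x,\delta_{\dot\zeta}}\right|\leq \|\pi_x(f)\|\leq \|f\|_r.$$
So $j$ is a contraction from $(\ccpair,\|\cdot\|_r)$ into the Banach space $(C_0(\Sigma;G),\|\cdot\|_\infty)$, where $\|h\|_\infty:=\sup_{\dot\gamma}\varpi(h(\dot\gamma))$, and so it extends uniquely to a contractive linear $j:\cpair\to C(\Sigma;G)$. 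I would then verify that the image lies in $C_0(\Sigma;G)$: if $\|f_n-j(a)\|_\infty<\eps$ with $f_n\in\ccpair$, then $\{\varpi(j(a))\geq\eps\}\subseteq\supp(f_n)$ is compact. Fiberwise Cauchy convergence of $(f_n(\dot\gamma))$ in $P^{-1}(\dot\gamma)$ ensures that $j(a)$ remains a genuine section of $L$.

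For injectivity, suppose $j(a)=0$ and approximate $a$ by $f_n\in\ccpair$ in $\|\cdot\|_r$. Then $\pi_x(f_n)\to\pi_x(a)$ in operator norm for each $x\in\unit{G}$, so by \eqref{evalformula0},
$$|\innerprod{\pi_x(a)\delta_{\dot\eta},\delta_{\dot\zeta}}|=\lim_n\varpi(f_n(\dot\zeta\dot\eta^{-1}))=\varpi(j(a)(\dot\zeta\dot\eta^{-1}))=0$$
for all $\dot\eta,\dot\zeta\in Gx$. Because $\{\delta_{\dot\eta}:\dot\eta\in Gx\}$ is an orthonormal basis for $\sH_x$, this forces $\pi_x(a)=0$ for every $x$, so $a=0$ in $\cpair$. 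The adjoint identity $j(a^*)(\dot\gamma)=\overline{j(a)(\dot\gamma^{-1})}$ then comes for free: the assignment $f\mapsto f^*$ is an isometry in $\|\cdot\|_r$, and $h\mapsto\overline{h(\cdot^{-1})}$ is an isometry in $\|\cdot\|_\infty$, so the pointwise formula on $\ccpair$ passes to the limit.

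The convolution formula is the main obstacle: for a general $a\in\cpair$, $j(a)$ need not have compact support, so the sum in~\eqref{hprop} is genuinely infinite and requires an independent convergence argument. My key estimate is a Parseval-type $\ell^2$-bound extracted from~\eqref{evalformula}: for each $x\in\unit{G}$,
$$\sum_{\dot\zeta\in Gx}\varpi(j(a)(\dot\zeta))^2=\sum_{\dot\zeta\in Gx}|\innerprod{\pi_x(a)\delta_x,\delta_{\dot\zeta}}|^2=\|\pi_x(a)\delta_x\|^2\leq\|a\|_r^2.$$
Applying this to $a^*$ and using the adjoint identity yields the dual bound $\sum_{r(\dot\eta)=y}\varpi(j(a)(\dot\eta))^2\leq\|a\|_r^2$. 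Cauchy--Schwarz then gives
$$\sum_{r(\dot\eta)=r(\dot\gamma)}\varpi(j(a)(\dot\eta))\,\varpi(j(b)(\dot\eta^{-1}\dot\gamma))\leq\|a\|_r\|b\|_r,$$
which establishes absolute convergence of the right-hand side of~\eqref{hprop}. To finish, I would pick $f_n\to a$ and $g_n\to b$ in $\|\cdot\|_r$, use continuity of multiplication to get $f_n*g_n\to ab$ and hence $j(f_n*g_n)(\dot\gamma)\to j(ab)(\dot\gamma)$, and apply the same Cauchy--Schwarz estimate to the error decomposition
$$\sum_{\dot\eta}\bigl(j(a)(\dot\eta)-f_n(\dot\eta)\bigr)j(b)(\dot\eta^{-1}\dot\gamma)\;+\;\sum_{\dot\eta}f_n(\dot\eta)\bigl(j(b)(\dot\eta^{-1}\dot\gamma)-g_n(\dot\eta^{-1}\dot\gamma)\bigr)$$
to pass to the limit inside the sum, yielding the desired identity.
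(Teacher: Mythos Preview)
Your proposal is correct and follows essentially the same route as the paper's proof: the contractivity of $j$ via~\eqref{evalformula}, injectivity via the matrix coefficients $\innerprod{\pi_x(a)\delta_{\dot\eta},\delta_{\dot\zeta}}$, and the convolution formula via the Parseval-type bound $\sum_{\dot\zeta\in Gx}\varpi(j(a)(\dot\zeta))^2=\|\pi_x(a)\delta_x\|^2$ (which the paper packages as $\|a\|_{2,x}^2$) together with Cauchy--Schwarz and the same error decomposition. Your explicit verification that the image lands in $C_0(\Sigma;G)$ is a detail the paper leaves implicit, but otherwise the arguments coincide.
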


\begin{proof} The algebra $C_c(\Sigma;G)$ may be regarded as a
  subalgebra of $C^*_r(\Sigma;G)$ or as its image under $j$ in
  $C_0(\Sigma;G)$.    First we show that for $f\in
  \ccpair$ we have $\|f\|_r\geq
  \|f\|_\infty$.  To see this, for $\dot\gamma\in
  G$ consider $\delta_{s(\dot\gamma)}$.  We have
 
\begin{align}\label{jnorm}
\begin{split} \|f\|_r\geq\|\pi_{s(\dot\gamma)}(f)\|\geq
\|\pi_{s(\dot\gamma)}(f)\delta_{s(\dot\gamma)}\|&=\<\pi_{s(\dot\gamma)}(f)(\delta_{s(\dot\gamma)}),\pi_{s(\dot\gamma)}(f)(\delta_{s(\dot\gamma)})\>^{1/2}\\
&=\sqrt{\sum_{s(\dot\eta)=s(\dot\gamma)}|f(\eta)|^2}\geq
|f(\dot\gamma)|.
\end{split}
\end{align} Thus $j$ extends to a norm decreasing linear map
$j:C^*_r(\Sigma;G)\to C_0(\Sigma;G)$.

We turn to showing that $j$ is injective.  
Since $j$ is norm-decreasing, the equalities in~\eqref{evalformula}
extend to every element of $C_r^*(\Sigma;G)$.  Therefore, for any
$\dot\gamma\in Gx$, and $a\in C^*_r(\Sigma;G)$,
\[\norm{\pi_x(a)\delta_{\dot\gamma}}^2=\sum_{\dot u\in
    Gx}|\innerprod{\pi_x(a)\delta_{\dot\gamma}, \delta_{\dot
      u}}|^2=\sum_{\dot u\in Gx}|\pi_x(a)\delta_{\dot\gamma}(\dot
  u)|^2=|\pi_x(a)\delta_{\dot\gamma}(\dot\gamma)|^2=|j(a)(\dot\gamma\dot\gamma^{-1})|^2.\]
So if $j(a)=0$, then $\pi_x(a)=0$ for every
$x\in\unit{G}$.  Thus $a=0$, so $j$ is injective.

To verify the first equality in~\eqref{hprop}, observe that it holds
for $a\in C_c(\Sigma;G)$.  For general $a\in C^*_r(\Sigma;G)$, observe
that for any $f\in C_c(\Sigma;G)$, the fact that $j$ is contractive yields
\[\varpi(j(a^*(\dot \eta))-\overline{j(a)(\dot\eta)})\leq
  \varpi(j(a^*-f^*)(\dot\eta))+\varpi(\overline{j(f-a)(\dot\eta^{-1})})\leq 2\norm{a-f}_r.\]
As the right-most term in this inequality can be made as small as
desired by choosing $f$ appropriately, we obtain the first equality.

Before establishing the second, for $a\in C^*_r(\Sigma;G)$ and $x\in G^{(0)}$, define 
\[\norm{a}_{2,x}:=\norm{\pi_x(a)\delta_x}.\]  Then
$\max\{\norm{a}_{2,x}, \norm{a^*}_{2,x}\}\leq \norm{a}_r$ and 
\begin{align*}
  \norm{a}_{2,x}^2&=\sum_{\dot\eta\in
    Gx}|\innerprod{\pi_x(a)\delta_x,\delta_{\dot\eta}}|^2=\sum_{\dot\eta\in
                    Gx} |j(a)(\dot\eta)|^2\\
  \intertext{and, using the first equality in~\eqref{hprop},}
  \norm{a^*}_{2,x}^2&=\sum_{\dot\eta\in
                    xG} |j(a)(\dot\eta)|^2.
\end{align*}

To establish the second equality in~\eqref{hprop}, first note it holds when
$a,b\in \ccpair$.  Now let $a,b\in C^*_r(\Sigma;G)$ be arbitrary.
Suppose $(f_i), (g_i)$ are nets in $\ccpair$ such that
$\norm{f_i-a}_r\rightarrow 0$ and $\norm{g_i-b}_r\rightarrow 0$.
Then
\begin{align*}&\varpi\left(\sum_{r(\dot\eta)=r(\dot\gamma)} j(f_i)(\dot\eta) \,
    j(g_i)(\dot\eta^{-1}\dot\gamma) - \sum_{r(\dot\eta)=r(\dot\gamma)}
    j(a)(\dot\eta) \,
                j(b)(\dot\eta^{-1}\dot\gamma)\right)\\
              &= \varpi\left(\sum_{r(\dot\eta)=r(\dot\gamma)} j(f_i)(\dot\eta)
                \,j(g_i-b)(\dot\eta^{-1}\dot\gamma)
                +\sum_{r(\dot\eta)=r(\dot\gamma)}j(f_i-a)(\dot\eta)\,
                j(b)(\dot\eta^{-1}\dot\gamma)\right)\\
              &\leq
                \norm{f_i^*}_{2,r(\dot\gamma)}\norm{g_i-b}_{2,s(\dot\gamma)}
                +\norm{f_i^*-a^*}_{2,r(\dot\gamma)}\norm{b}_{2,s(\dot\gamma)}\\
              &\leq \norm{f_i}_r\norm{g_i-g}_r+\norm{f_i-a}_r\norm{b}_r,
\end{align*}
from which it follows that
\[    \lim_{i\rightarrow\infty}\sum_{r(\dot\eta)=r(\dot\gamma)} j(f_i)(\dot\eta) \,
    j(g_i)(\dot\eta^{-1}\dot\gamma) =\sum_{r(\dot\eta)=r(\dot\gamma)}
    j(a)(\dot\eta) \,
                j(b)(\dot\eta^{-1}\dot\gamma).\]
Therefore,
for every $\dot\gamma\in G$,
\begin{align*}j(ab)(\dot\gamma)
  &=\innerprod{\pi_{s(\dot\gamma)}(ab)\delta_{s(\dot\gamma)},
    \delta_{\dot\gamma}}\delta_{\dot \gamma}(\dot\gamma)=\lim  j(f_ig_i)(\dot\gamma)\\ &=\lim \sum_{r(\dot\eta)=r(\dot\gamma)}
f_i(\dot\eta)g_i(\dot\eta\inv\dot\gamma) =\sum_{r(\dot\eta)=r(\dot\gamma)}
a(\dot\eta)b(\dot\eta\inv\dot\gamma),
\end{align*}
as desired. 
\end{proof}

\begin{definition} Let $G$ be an \'etale groupoid and $\Gamma$ a
  discrete abelian group.   A \textit{twist graded by $\Gamma$} 
  is a twist
 $\bbT\times \unit{G}\hookrightarrow \Sigma\twoheadrightarrow
  G$ over $G$ together with continuous groupoid
  homomorphisms $c_\Sigma: \Sigma\rightarrow \Gamma$ and $c_G:
  G\rightarrow \Gamma$ such that the diagram, 
\begin{equation}\label{graddef0}\xymatrix{ \T\times \go\ar[r]& \Sigma
\ar[dr]_{c_{_\Sigma}} \ar[r] & G \ar[d]^{c_{_G}} \\ & &
\Gamma &}
\end{equation} commutes.
We will sometimes abbreviate~\eqref{graddef0} and simply say $\Sigma\rightarrow G$ is a \textit{$\Gamma$-graded
  twist}.  
\end{definition}

For $\omega\in \hat\Gamma$ and $t\in \Gamma$ we denote the natural
pairing $\omega(t)$ by $\langle\omega,t\rangle$.  We will use additive notation for the group $\Gamma$ and multiplicative notation for the group $\hat\Gamma$. We now show that the grading maps $c_\Sigma$ and $c_G$ induce an
action of $\hat\Gamma$ on $C^*_r(\Sigma;G)$.   This fact is well known to
experts but we include a proof for completeness.

\begin{lemma}\label{hatgammaact} Suppose $\Sigma\rightarrow G$ is a
  $\Gamma$-graded twist.  There exists a continuous action of
$\hat{\Gamma}$ on $C_r^*(\Sigma;G)$ characterized by
\[(\omega\cdot f)(\dot\gamma)=\<\omega,c_G(\dot\gamma)\>f(\dot\gamma)\]
where $\omega\in \hat{\Gamma}$ and $f\in \ccpair$.
\end{lemma}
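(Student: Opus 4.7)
The plan is to define the action on the dense subalgebra $\ccpair$ by the given formula, verify it is a $\ast$-algebra automorphism there, extend to $C^*_r(\Sigma;G)$ by showing each $\alpha_\omega$ is isometric via a unitary implementation on the regular representations, and finally check point-norm continuity by an $\eps/3$ argument that exploits discreteness of $\Gamma$.

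First, for $\omega\in\hat\Gamma$ and $f\in\ccpair$, set $(\alpha_\omega f)(\dot\gamma)=\<\omega,c_G(\dot\gamma)\>\,f(\dot\gamma)$. Since this only rescales fibers, it produces another element of $\ccpair$. That $\alpha_\omega(f^*)=\alpha_\omega(f)^*$ and $\alpha_\omega(f\ast g)=\alpha_\omega f\ast\alpha_\omega g$ follow immediately from $c_G$ being a homomorphism: the character factor pulls out of the convolution sum because $\<\omega,c_G(\dot\eta)\>\<\omega,c_G(\dot\eta^{-1}\dot\gamma)\>=\<\omega,c_G(\dot\gamma)\>$, and for the involution, $\overline{\<\omega,c_G(\dot\gamma^{-1})\>}=\<\omega,c_G(\dot\gamma)\>$. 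Clearly $\alpha_1=\id$ and $\alpha_{\omega_1\omega_2}=\alpha_{\omega_1}\alpha_{\omega_2}$.

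Next, I extend each $\alpha_\omega$ to $C^*_r(\Sigma;G)$. For $x\in\go$, define a unitary $U^x_\omega$ on $\sH_x$ by $U^x_\omega\delta_{\dot\gamma}=\<\omega,c_G(\dot\gamma)\>\,\delta_{\dot\gamma}$; this is manifestly unitary on the orthonormal basis $\{\delta_{\dot\gamma}:\dot\gamma\in Gx\}$. Using the matrix-coefficient formula \eqref{evalformula0}, a direct computation shows $U^x_\omega\pi_x(f)(U^x_\omega)^*=\pi_x(\alpha_\omega f)$ for $f\in\ccpair$. Consequently $\|\pi_x(\alpha_\omega f)\|=\|\pi_x(f)\|$ for every $x$, so $\|\alpha_\omega f\|_r=\|f\|_r$. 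Thus $\alpha_\omega$ extends to an isometric $\ast$-automorphism of $C^*_r(\Sigma;G)$, and the homomorphism property $\alpha_{\omega_1\omega_2}=\alpha_{\omega_1}\alpha_{\omega_2}$ passes to the completion by density.

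Finally, I address continuity $\hat\Gamma\times C^*_r(\Sigma;G)\to C^*_r(\Sigma;G)$. By a standard $\eps/3$-argument using that each $\alpha_\omega$ is isometric, it suffices to show $\omega\mapsto\alpha_\omega f$ is continuous at $\omega=1$ for each $f\in\ccpair$. Let $K=\supp(f)$. Because $c_G$ is continuous and $\Gamma$ is discrete, $c_G(K)$ is a compact subset of a discrete space, hence finite. Since $K$ is compact in the \'etale groupoid $G$, it is covered by finitely many open bisections, so for some integer $N$ the cardinalities $|Gx\cap K|$ and $|xG\cap K|$ are at most $N$ for every $x\in\go$. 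The computation in \eqref{jnorm} (and its analogue with source replaced by range) yields $\|g\|_r\leq N\|g\|_\infty$ for any $g\in\ccpair$ supported in $K$. Applied to $g=\alpha_\omega f-f$, which is supported in $K$, we obtain
\[\|\alpha_\omega f-f\|_r\le N\sup_{\dot\gamma\in K}\bigl|\<\omega,c_G(\dot\gamma)\>-1\bigr|\,\|f\|_\infty=N\|f\|_\infty\max_{t\in c_G(K)}|\<\omega,t\>-1|,\]
which tends to $0$ as $\omega\to 1$ because $c_G(K)$ is finite. The main point to be careful about is this last continuity step: controlling the reduced norm by a uniform multiple of the sup-norm on functions of fixed compact support, together with the crucial observation that $c_G(K)$ is finite, which reduces the character estimate to a finite max.
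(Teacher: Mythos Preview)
Your proof is correct and follows essentially the same strategy as the paper: verify the $*$-algebra relations on $\ccpair$, implement each $\alpha_\omega$ unitarily on the regular representations to get isometry in the reduced norm, then check strong continuity. One small slip: you cite \eqref{jnorm} for the inequality $\|g\|_r\le N\|g\|_\infty$, but \eqref{jnorm} gives the reverse bound $\|f\|_r\ge\|f\|_\infty$; the inequality you need is the standard $I$-norm estimate (or the partition-of-unity argument writing $g$ as a sum of $N$ functions supported on bisections), which is correct but not what \eqref{jnorm} says.

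Your continuity argument differs from the paper's. The paper works directly with arbitrary $a\in C^*_r(\Sigma;G)$ and estimates $\|\pi_x(\omega_i\cdot a-\omega\cdot a)\chi\|$ via the same unitary trick used for isometry. You instead reduce to $f\in\ccpair$ by $\eps/3$, then exploit discreteness of $\Gamma$ to conclude $c_G(\supp f)$ is finite and combine this with the reduced-norm bound on functions of fixed compact support. Your route is more explicit about where discreteness of $\Gamma$ enters and avoids the somewhat delicate pointwise manipulation in the paper's estimate; the paper's route is slightly more direct once one accepts the unitary implementation. Both are valid.
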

\begin{proof} First we check that the action is multiplicative.  For
this we compute
\begin{align*}
(\omega\cdot f)*(\omega\cdot g)(\dot\gamma)&=\sum_{r(\dot\eta)=r(\dot\gamma)}(\omega\cdot f)(\dot\eta)(\omega\cdot g)(\dot\eta\inv\dot\gamma)\\
&=\sum_{r(\dot\eta)=r(\dot\gamma)}\<\omega,c(\dot\eta)\>f(\dot\eta)\<\omega,c(\dot\eta\inv\dot\gamma)\>g(\dot\eta\inv\dot\gamma)\\
&=\<\omega,c(\dot\gamma)\>\sum_{r(\dot\eta)=r(\dot\gamma)}f(\dot\eta)g(\dot\eta\inv\dot\gamma)=(\omega\cdot (f*g))(\dot\gamma).
\end{align*}

Now let $L$ be the line bundle over $G$ associated to $\Sigma$ and for
$x\in \go$ let $L_x:=L|_{Gx}$.  Consider the regular representation
$\pi_x$ of $C^*_r(\Sigma;G)$
associated to $x\in \go$.

For $\chi\in \mathcal{H}_x$ define $\chi_\omega\in \sH_x$ by $\chi_\omega(\dot\gamma):=
\overline{\<\omega,c(\dot\gamma)\>}\chi(\dot\gamma)$.   Then 
$\|\chi_\omega\|^2
=\|\chi\|^2$, so the mapping $\chi\mapsto \chi_\omega$ is a unitary
$W_\omega\in \mathcal B(\sH_x)$.

So for $f\in C_c(\Sigma;G)$,
\begin{align*}
\pi_x(\omega\cdot f)\chi(\dot\gamma)&=\sum_{r(\dot\eta)=r(\dot\gamma)}\<\omega,c(\dot\eta)\>f(\dot\eta)\chi(\dot\eta\inv
\dot\gamma)\\
&=\sum_{r(\dot\eta)=r(\dot\gamma)}\<\omega,c(\dot\gamma)\>\overline{\<\omega,c(\dot\gamma)\>
\<\omega,c(\dot\eta\inv)\>}f(\dot\eta)\chi(\dot\eta\inv \dot\gamma)\\
&=\<\omega,c(\dot\gamma)\>\sum_{r(\dot\eta)=r(\dot\gamma)}f(\dot\eta)\chi_\omega(\dot\eta\inv
\dot\gamma)=\<\omega,c(\dot\gamma)\>\pi_x(f)\chi_\omega(\dot\gamma).
\end{align*} This then implies that
$\|\pi_x(\omega\cdot f)\chi\|=\|\pi_x(f)\chi_\omega\|$.  So now
\[\|\pi_x(\omega\cdot f)\|=\sup_{\|\chi\|=1}\|\pi_x(\omega\cdot f)\chi\|=\sup_{\|\chi\|=1}\|\pi_x(f)\chi_\omega\|=\sup_{\|\chi\|=1}\|\pi_x(f)\chi\|=\|\pi_x(f)\|
\] and since this holds for all $x$ we get
\[\|\omega\cdot f\|_r=\|f\|_r\] as desired.

Now suppose that we have nets $\omega_i\to \omega$ and $a_i\to a\in C^*_r(G;\Sigma)$.  Consider
$\omega_i\cdot a_i-\omega\cdot a=\omega_i\cdot a_i-\omega_i\cdot a+\omega_i\cdot a-\omega\cdot a.$
Since $\|\omega\cdot a\|_r=\|a\|_r$, to show $\omega_i\cdot a_i\to \omega\cdot a$ it
suffices to show $\omega_i\cdot a\to\omega\cdot a$. For $i$ sufficiently large
we can assume $\norm{a}_r\sup|\<\omega_i-\omega\cdot c(\dot\eta)\>|<\epsilon$. Now

\begin{align*}
\|\pi_x(\omega_i\cdot a-\omega\cdot a)\chi\|&=\|\sum_{r(\dot\eta)=r(\dot\gamma)}\<\omega_i\omega\inv,c(\dot\eta)\>a(\dot\eta)\chi(\dot\eta\inv
\dot\gamma)\|\\
&=\|\<\omega_i\omega\inv,c(\dot\gamma)\>\sum_{r(\dot\eta)=r(\dot\gamma)}a(\dot\eta)\chi_{\omega_i\omega\inv}(\dot\eta\inv
\dot\gamma)\|\\ &\leq
|\<\omega_i\omega\inv,c(\dot\gamma)\>|\|a\|_r<\epsilon.
\end{align*} Since this holds for all $x\in \go$ we get the result.
\end{proof}

\begin{remark} \label{hatgammaact1}  When elements of $C_c(\Sigma;G)$
  are viewed as in~\eqref{ccsg2},  the action of $\hat\Gamma$ on
  $C_r^*(\Sigma;G)$ is characterized by
  \[(\omega\cdot f)(\gamma)=\innerprod{\omega, c_\Sigma(\gamma)}
    f(\gamma),\] where $\omega\in \hat\Gamma$ and $f\in C_c(\Sigma)$
  is covariant. 
\end{remark}


\section{$\Gamma$-Cartan pairs and abelian group actions} \label{gamma Cartan}
In this section we define the main objects of our study, \gc\ pairs, and explore the relationship between \gc\ pairs and strongly continuous actions of compact abelian groups on $C^*$-algebras. We first give some preliminary results on topologically graded $\cs$-algebras.

\begin{definition} A $\cs$-algebra $A$ is \emph{topologically graded} by a
  (discrete abelian) group $\Gamma$ if there exist a family of linearly
independent closed linear subspaces $\{A_t\}_{t\in\Gamma}$ of $A$ such that 
  \begin{itemize}
  \item $A_tA_s\subseteq A_{t+s}$,
  \item $A_t^*=A_{-t}$,
   \item  $A$  is densely spanned by  $\{A_t\}_{t\in\Gamma}$; and
\item there is
  a faithful conditional expectation from $A$ onto $A_0$.
\end{itemize}
\end{definition}

\begin{definition} Let $A$ be a $\cs$-algebra topologically graded by a group $\Gamma$. We call an element $a\in A$ \textit{homogeneous} if
$a\in A_t$ for some $t$.  Let $D \subseteq A_0$ be an
  abelian subalgebra.  We denote the set of normalizers of $D$ in $A$
  by $N(A,D)$ or simply $N$. Also, $n$ is a
  \textit{homogeneous normalizer} if it is both a normalizer and
  homogeneous: that is, $n$ is a normalizer and $n \in A_t$ for some
  $t \in \Gamma$.  We denote the set of homogeneous normalizers by
  $N_h(A,D)$ or simply $N_h$.
Notice that for $n\in N_h$ and $d\in D$ we have $nd, dn\in N_h$.  
\end{definition}

The term topologically graded
was introduced by Exel \cite{Exel97}; see also \cite{ExelBook}.  

An action of a compact abelian group on a \cstaralg\ produces a
topological grading, which we now describe in some detail.

Let $\Gamma$ be a
discrete abelian group and $A$  a \cstaralg.  As is customary, we
say \textit{$\hat \Gamma$ acts strongly on $A$} if there is a strongly
continuous group of automorphisms on the \cstaralg\ $A$ indexed by
$\hat{\Gamma}$. That is, there is a map
$\hat\Gamma\times A\rightarrow A$, written
$(\omega, a)\mapsto \omega\cdot a$ such that:
\begin{enumerate}
  \item for every $\omega$, $a\mapsto \omega\cdot a$ is an
    automorphism $\beta_\omega$ of $A$; \item the map $\omega\mapsto
    \beta_\omega$ is a homomorphism of $\hat \Gamma$ into $\aut(A)$; and 
    \item for each $a\in A$, the map $\omega\mapsto \omega\cdot a$ is
      norm continuous.
    \end{enumerate}  

Let $A^{\hat\Gamma}$ be the fixed point algebra
under this action.  For $t$ in $\Gamma$ and $a\in A$ define
\begin{equation}\label{phit} \Phi_t(a):=\int_{\hat{\Gamma}}
(\omega\cdot a) \<\omega\inv,t\> d\omega,
\end{equation}
and let 
\[ A_t=\Phi_t(A)
\] be the range of $\Phi_t$.
Then for each $t\in\Gamma$, $\Phi_t$ is a completely contractive and
idempotent linear map.  The following simple fact is worth noting.
\begin{lemma}\label{Phi0faith} The map  $\Phi_0: A\to A^{\hat{\Gamma}}=A_0$ is a faithful conditional
  expectation.
\end{lemma}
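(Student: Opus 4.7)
The plan is to verify the three properties in sequence: that $\Phi_0$ maps into $A^{\hat\Gamma}$, that it is a conditional expectation, and that it is faithful.

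First I would check that $\Phi_0(A)\subseteq A^{\hat\Gamma}$. Fix $\omega_0\in\hat\Gamma$ and $a\in A$. By translation invariance of normalized Haar measure on the compact group $\hat\Gamma$,
\[
\omega_0\cdot\Phi_0(a)=\int_{\hat\Gamma}(\omega_0\omega)\cdot a\,d\omega=\int_{\hat\Gamma}\omega\cdot a\,d\omega=\Phi_0(a),
\]
so $\Phi_0(a)\in A^{\hat\Gamma}$. Conversely, for $a\in A^{\hat\Gamma}$, $\omega\cdot a=a$ for every $\omega$, and hence $\Phi_0(a)=a$. Thus $\Phi_0$ is an idempotent linear map with range exactly $A^{\hat\Gamma}=A_0$.

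Next, the map is completely contractive: as an average of the isometric automorphisms $\beta_\omega$, it is bounded by $1$ in norm (and likewise in each matrix level since each $\beta_\omega$ extends to an isometry on $M_n(A)$). A contractive idempotent from a $\cs$-algebra onto a $\cs$-subalgebra is automatically a conditional expectation by Tomiyama's theorem, so $\Phi_0\colon A\to A_0$ is a conditional expectation.

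Finally, for faithfulness, suppose $a\in A$ satisfies $\Phi_0(a^*a)=0$. Set $b=a^*a\geq 0$; then $\omega\cdot b\geq 0$ for every $\omega\in\hat\Gamma$, and the map $\omega\mapsto\omega\cdot b$ is norm-continuous by strong continuity of the action. For any state $\phi$ on $A$, the function $\omega\mapsto\phi(\omega\cdot b)$ is continuous and non-negative, and
\[
\int_{\hat\Gamma}\phi(\omega\cdot b)\,d\omega=\phi(\Phi_0(b))=0.
\]
Since a non-negative continuous function with zero integral against Haar measure on $\hat\Gamma$ must vanish identically, $\phi(\omega\cdot b)=0$ for all $\omega$; in particular, taking $\omega$ to be the identity gives $\phi(b)=0$. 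As this holds for every state $\phi$, we conclude $b=0$ and hence $a=0$. The only subtle point is this last faithfulness step, but it reduces to the standard observation that a continuous non-negative function on a compact group with vanishing Haar integral is zero.
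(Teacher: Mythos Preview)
Your argument is correct and follows essentially the same approach as the paper: the paper's sketch declares the conditional expectation property ``clear'' and then proves faithfulness via exactly the state-and-Haar-integral argument you give. Your additional detail---verifying the range and idempotence directly and invoking Tomiyama's theorem---is a reasonable way to fill in what the paper omits.
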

\begin{proof}[Sketch of Proof] That $\Phi_0$ is a conditional
  expectation is clear, so it remains to show $\Phi_0$ is faithful.
  If $\Phi_0(a^*a)=0$, then for every state $\rho$ on
$A$, $\int_{\hat\Gamma} \rho(\omega\cdot (a^*a))\, d\omega=0$.  Thus
$\rho(\omega\cdot (a^*a))=0$ for every state $\rho$ and every
$\omega\in\hat\Gamma$.  Taking $\omega$ to be the unit element gives
$\rho(a^*a)=0$ for every state, so $a^*a=0$.
\end{proof}

We now characterize the homogeneous elements
of $A$.  The following lemma is a generalization of  \cite[Lemma 5.2.10]{AASMBook}. where it is proved for $\Gamma=\mathbb{Z}$.

\begin{lemma}\label{At prop}Suppose $\hat\Gamma$ acts strongly on $A$. 
The
  following statements hold for all $t \in \Gamma, a,b \in A$.
\begin{enumerate}
\item\label{it1}$a\in A_t$ iff for every $\sigma\in
\hat\Gamma$, $\omega\cdot a=\<\omega, t\>a$.
\item \label{it2}$a\in A_t$ iff $a^*\in A_{-t}$.
\item\label{it3} If $a\in A_t$, $b\in A_s$ then $ab\in A_{t+s}$.
\item \label{it4} If $a\in A_t$ and $s\in \Gamma$, then
  $\Phi_s(a)=\begin{cases} a& \text{if $s=t$;}\\ 0&
    \text{otherwise.}\end{cases}$
\end{enumerate}
\end{lemma}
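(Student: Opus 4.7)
The plan is to derive everything from part (\ref{it1}), which identifies $A_t$ with the spectral subspace $\{a\in A: \omega\cdot a=\innerprod{\omega,t}a \text{ for all } \omega\in\hat\Gamma\}$. For the forward direction of (\ref{it1}), I would take an arbitrary $b\in A$ and compute $\omega\cdot \Phi_t(b)$ by pulling $\omega$ inside the integral defining $\Phi_t$ in~\eqref{phit}, then make the change of variables $\sigma\mapsto\omega^{-1}\sigma$ using the translation invariance of Haar measure on $\hat\Gamma$; the factor $\innerprod{\omega^{-1}\sigma^{-1},t}$ splits as $\innerprod{\omega^{-1},t}\innerprod{\sigma^{-1},t}$, producing $\omega\cdot\Phi_t(b)=\innerprod{\omega,t}\Phi_t(b)$. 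For the reverse direction, if $\omega\cdot a=\innerprod{\omega,t}a$ for every $\omega$, substituting into~\eqref{phit} gives $\Phi_t(a)=a\int_{\hat\Gamma}\innerprod{\omega,t}\innerprod{\omega^{-1},t}\,d\omega=a$, so $a=\Phi_t(a)\in A_t$.

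Parts (\ref{it2}) and (\ref{it3}) are then immediate formal consequences of (\ref{it1}). For (\ref{it2}), I would apply $*$ to $\omega\cdot a=\innerprod{\omega,t}a$; since each $\beta_\omega$ is a $*$-automorphism and $\overline{\innerprod{\omega,t}}=\innerprod{\omega,-t}$, one obtains $\omega\cdot a^*=\innerprod{\omega,-t}a^*$, which by (\ref{it1}) means $a^*\in A_{-t}$. For (\ref{it3}), because $\beta_\omega$ is multiplicative, $\omega\cdot(ab)=(\omega\cdot a)(\omega\cdot b)=\innerprod{\omega,t+s}ab$, and (\ref{it1}) again delivers $ab\in A_{t+s}$.

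For (\ref{it4}), assuming $a\in A_t$, I would substitute $\omega\cdot a=\innerprod{\omega,t}a$ into the definition of $\Phi_s(a)$ to obtain
\[
\Phi_s(a)=a\int_{\hat\Gamma}\innerprod{\omega,t}\innerprod{\omega^{-1},s}\,d\omega
=a\int_{\hat\Gamma}\innerprod{\omega,t-s}\,d\omega,
\]
and then invoke the orthogonality of characters on the compact abelian group $\hat\Gamma$ (Pontryagin duality): the integral equals $1$ when $t=s$ and $0$ otherwise.

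There is no serious obstacle here; the entire lemma is a standard Fourier-analytic fact for strongly continuous actions of compact abelian groups, and the argument given for $\Gamma=\Z$ in \cite[Lemma~5.2.10]{AASMBook} adapts verbatim once one replaces integrals over $\bbT$ with integrals over $\hat\Gamma$ and uses character orthogonality in place of the corresponding computation for $\bbT$. The only point requiring any care is justifying the interchange of $\omega\cdot(-)$ with the Bochner integral defining $\Phi_t$, which is standard because $\beta_\omega$ is a bounded linear operator on $A$.
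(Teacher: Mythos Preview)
Your proposal is correct and follows essentially the same argument as the paper's proof: both establish (\ref{it1}) by a change of variables in the Haar integral defining $\Phi_t$, then read off (\ref{it2})--(\ref{it4}) from the spectral-subspace characterization together with character orthogonality. One small slip: after the substitution $\sigma\mapsto\omega^{-1}\sigma$ the factor becomes $\innerprod{\sigma^{-1}\omega,t}=\innerprod{\omega,t}\innerprod{\sigma^{-1},t}$, not $\innerprod{\omega^{-1}\sigma^{-1},t}$; your stated conclusion $\omega\cdot\Phi_t(b)=\innerprod{\omega,t}\Phi_t(b)$ is nonetheless the right one.
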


\begin{proof} Let $a\in A_t$ and $\sigma\in \hat\Gamma$.  Then
\begin{align*} \sigma\cdot a&=\sigma\cdot
\Phi_t(a)=\int_{\hat\Gamma}((\sigma\omega)\cdot a)
\innerprod{\omega\inv,t}d\omega=\int_{\hat\Gamma} (\omega\cdot a) \innerprod{\omega\inv,t
\sigma}d\omega\\ &=\<\sigma, t\>\Phi_t(a)=\<\sigma,t\>a.
\end{align*} Conversely if $\sigma\cdot a= \<\sigma, t\>a$ for every
$\sigma\in \hat\Gamma$, then
 \[ \Phi_t(a)=\int_{\hat\Gamma} (\omega\cdot a) \<\omega\inv,t\>
d\omega=\int_{\hat\Gamma} a \<\omega,t\>\<\omega\inv,t\> d\omega=a.
 \]
 
Items \eqref{it2} and \eqref{it3} follow immediately since
$\sigma\cdot (a^*)=(\sigma\cdot
a)^*=\overline{\innerprod{\sigma,t}}a^*=\<\sigma,-t\>a^*$ and $\sigma\cdot
(ab)=(\sigma\cdot a)(\sigma\cdot
b)=\innerprod{\sigma,t}a\<\sigma,s\>b=\<\sigma, t+s\>ab$.

Lastly for \eqref{it4}, \[\Phi_s(a)=\int_{\hat\Gamma} (\omega\cdot a)
\<\omega\inv,s\>d\omega=a\int_{\hat\Gamma}
\<\omega,t\>\<\omega\inv,s\>d\omega=\delta_{s,t}a.\qedhere\]
\end{proof}

The following lemma and its corollary show   the homogeneous spaces
$\{A_t\}_{t\in\Gamma}$ together densely span in $A$.  We thank  Ruy Exel for
showing us the simple proof.
\begin{lemma}\label{AtDense} Suppose the compact abelian group $\hat\Gamma$ acts strongly on the
  \cstar-algebra $A$, and $a\in A$.  Then $a\in\cspn\{\Phi_t(a): t\in \Gamma\}$.
  
\end{lemma}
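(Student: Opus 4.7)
The plan is to show that $a$ can be approximated arbitrarily well by finite linear combinations of the $\Phi_t(a)$ by a standard Fejér-style argument on the compact abelian group $\hat\Gamma$. The strong continuity of the action together with the density of trigonometric polynomials in $C(\hat\Gamma)$ (via Stone--Weierstrass) will do all the work.

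First, fix $\varepsilon > 0$. Using the strong continuity of the action, I would choose a neighborhood $U$ of the identity $e \in \hat\Gamma$ such that $\|\omega \cdot a - a\| < \varepsilon/2$ for all $\omega \in U$. Next, pick a non-negative continuous function $\phi \in C(\hat\Gamma)$ with $\operatorname{supp}(\phi) \subseteq U$ and $\int_{\hat\Gamma} \phi(\omega)\, d\omega = 1$. Then the vector-valued integral $b := \int_{\hat\Gamma} \phi(\omega)(\omega \cdot a)\, d\omega$ satisfies
\[
\|b - a\| = \Bigl\|\int_{\hat\Gamma} \phi(\omega)(\omega \cdot a - a)\, d\omega\Bigr\| \leq \varepsilon/2,
\]
since $\phi$ is supported where $\|\omega \cdot a - a\| < \varepsilon/2$.

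Second, I would invoke Stone--Weierstrass to approximate $\phi$ uniformly by a trigonometric polynomial. The functions $\{\omega \mapsto \langle \omega, t\rangle : t \in \Gamma\}$ form a unital, conjugation-closed (since $\overline{\langle \omega, t\rangle} = \langle \omega, -t\rangle$), point-separating subalgebra of $C(\hat\Gamma)$, hence are uniformly dense. So there exist scalars $c_1, \dots, c_n$ and elements $t_1, \dots, t_n \in \Gamma$ with
\[
\Bigl\|\phi - \sum_{i=1}^n c_i \langle \cdot, t_i\rangle\Bigr\|_\infty < \frac{\varepsilon}{2(\|a\|+1)}.
\]
Writing $p(\omega) := \sum_i c_i \langle \omega, t_i\rangle$, the integral $\int p(\omega)(\omega \cdot a)\, d\omega$ is within $\varepsilon/2$ of $b$, and directly computing
\[
\int_{\hat\Gamma} p(\omega)(\omega \cdot a)\, d\omega = \sum_i c_i \int_{\hat\Gamma} \langle \omega, t_i\rangle (\omega \cdot a)\, d\omega = \sum_i c_i \, \Phi_{-t_i}(a)
\]
shows this is an element of $\operatorname{span}\{\Phi_t(a) : t \in \Gamma\}$. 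Combining with the estimate from the first step gives an element of the span within $\varepsilon$ of $a$.

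The argument involves no real obstacle; the only points to be careful about are the vector-valued integration (which is legitimate because $\omega \mapsto \omega \cdot a$ is norm continuous on the compact group $\hat\Gamma$) and correctly tracking the sign so that $\int \langle \omega, t\rangle (\omega \cdot a)\, d\omega$ lands in $A_{-t}$ rather than $A_t$, in agreement with the definition of $\Phi_t$ in \eqref{phit}. Everything else is a routine $\varepsilon/2$ splitting.
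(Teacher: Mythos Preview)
Your argument is correct: the Fej\'er-style averaging combined with Stone--Weierstrass gives a direct, constructive approximation of $a$ by finite linear combinations of the $\Phi_t(a)$, and you have tracked the sign in the definition of $\Phi_t$ accurately.

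The paper takes a genuinely different route. Rather than building an explicit approximant, it argues by duality: if $\rho\in A^*$ annihilates every $\Phi_t(a)$, then the continuous function $g_a(\omega):=\rho(\omega\cdot a)$ on $\hat\Gamma$ has vanishing Fourier transform $\hat g_a(t)=\rho(\Phi_t(a))=0$ for all $t\in\Gamma$, hence $g_a\equiv 0$; evaluating at the identity gives $\rho(a)=0$, and Hahn--Banach finishes. Your approach is more hands-on and yields an explicit approximation scheme (useful if one ever wants quantitative control), while the paper's proof is shorter and sidesteps the choice of bump function and the Stone--Weierstrass step by packaging everything into injectivity of the Fourier transform on $L^1(\hat\Gamma)$. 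Both ultimately rest on the same harmonic-analytic fact that characters are dense, but the paper invokes it via Fourier uniqueness rather than uniform approximation.
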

\begin{proof}

Let $B:=\overline{\spn}\{\Phi_t(a): t\in \Gamma\}$.
Suppose $\rho$ is a bounded linear functional on $A$ which 
annihilates $B$.  Define $g_a:\hat\Gamma\rightarrow \bbC$
by $g_a(\omega)=\rho(\omega\cdot a)$.
Compute the Fourier transform of $g_a$: for $t\in \Gamma$,
\begin{align*}\hat g_a(t)&=\int_{\hat\Gamma}
                           g_a(\omega)\overline{\innerprod{\omega,t}}\,
                           d\beta\\
  &=\rho\left(\int_{\hat\Gamma} (\omega\cdot a)
                       \overline{\innerprod{\omega,t}}\, d\omega\right) \\
                     &=\rho(\Phi_t(a))=0.
\end{align*}  Since the Fourier transform is one-to-one, 
$g_a=0$.  Taking $\omega=1$, we get $\rho(a)=0$.  As this does not
depend on the choice of $\rho$,  by the Hahn-Banach
theorem, $a\in B$
\end{proof}

As an immediate corollary we get that $\{A_t\}_{t\in\Gamma}$ have dense span in $A$.  
\begin{cor}Suppose the compact abelian group $\hat\Gamma$ acts on the
  \cstar-algebra $A$.   For $t\in \Gamma$, let $A_t:=\{a\in A: \beta\cdot
  a=\innerprod{\beta,t}a \text{ for every } \beta\in \hat\Gamma\}$.  Then
  $A=\overline{\spn}\{A_t: t\in \Gamma\}$. 
  \end{cor}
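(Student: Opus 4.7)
The corollary is essentially a repackaging of the preceding lemma, so my plan is a short one. The only subtlety is matching the two presentations of $A_t$: the definition given in the corollary (the common eigenspace of the automorphisms $\{\beta\cdot\,\}_{\beta \in \hat\Gamma}$ corresponding to the character $t$) versus the definition used earlier in the section (the range of the projection $\Phi_t$ from~\eqref{phit}). These agree by Lemma~\ref{At prop}\eqref{it1}, so there is no ambiguity and I will silently use either description.

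For the inclusion $A \subseteq \overline{\spn}\{A_t : t \in \Gamma\}$, I would fix $a \in A$ and apply Lemma~\ref{AtDense} directly to obtain
\[
a \in \overline{\spn}\{\Phi_t(a) : t \in \Gamma\}.
\]
Since $\Phi_t(a) \in \Phi_t(A) = A_t$ for each $t$, the right-hand side is contained in $\overline{\spn}\{A_t : t \in \Gamma\}$, finishing this direction. The reverse inclusion $\overline{\spn}\{A_t : t \in \Gamma\} \subseteq A$ is immediate, since each $A_t$ is by definition a subset of the Banach space $A$, whose closed linear span sits inside $A$.

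There is no real obstacle here: the Hahn--Banach/Fourier argument absorbing all of the analytic content was already carried out in Lemma~\ref{AtDense}. The corollary just records that, thanks to Lemma~\ref{At prop}\eqref{it4} identifying $\Phi_t(A)$ with the spectral subspace $A_t$, the approximating elements in that lemma lie in the homogeneous subspaces themselves.
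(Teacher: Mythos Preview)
Your proposal is correct and matches the paper's approach: the paper states this as an immediate corollary of Lemma~\ref{AtDense} without further proof, and your argument spells out exactly those details, using Lemma~\ref{At prop}\eqref{it1} to reconcile the two descriptions of $A_t$ and then invoking Lemma~\ref{AtDense} for the nontrivial inclusion.
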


\begin{remark}\label{tgchar}
Lemmas~\ref{At prop} and~\ref{AtDense} show
that if $\hat\Gamma$ acts strongly on $A$,
then $A$ is topologically graded by $\Gamma$.  In particular, when
$\Sigma\rightarrow G$ is a $\Gamma$-graded twist,
Lemma~\ref{hatgammaact} shows that $C^*_r(\Sigma;G)$ is topologically
graded by $\Gamma$.   In
\cite[Theorem~3]{Rae18} the converse to Lemma~\ref{At prop} is proved: it is
shown that if $A$ is topologically graded by $\Gamma$, then there is a
strongly continuous action of $\hat\Gamma$ on $A$ such that $a\in A_t$
if and only if
$$ a = \int_{\hat\Gamma} (\omega \cdot a) \<\omega^{-1},t\>\ d\omega. $$
\end{remark}

We now observe that the proof of Lemma~\ref{AtDense} can be used to
show that if $\cspn \, {N(A,D)}=A$ then $\cspn \,{N_h(A,D)}=A$. Here
are the details.  
\begin{proposition} \label{NhDense} Suppose $\hat\Gamma$ acts on $A$ and that $D$ is
  a MASA in $A_0$.  If $n\in N$, then for every $t\in\Gamma$,
  $\Phi_t(n)\in N_h$ and $n\in
  \overline{\spn}\{\Phi_t(n): t\in \Gamma\}$.
  \end{proposition}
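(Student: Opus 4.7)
The proof has three things to verify: (i) $\Phi_t(n)\in A_t$, (ii) $\Phi_t(n)\in N(A,D)$, and (iii) $n\in\overline{\spn}\{\Phi_t(n):t\in\Gamma\}$. The plan is that (i) is immediate from Lemma~\ref{At prop}\eqref{it1}, since the integral defining $\Phi_t$ makes $\omega\cdot\Phi_t(n)=\langle\omega,t\rangle\Phi_t(n)$ for every $\omega\in\hat\Gamma$. For (iii), I would simply cite Lemma~\ref{AtDense} applied to $a=n$. All the work is in (ii).

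To show (ii), fix $d\in D$ and prove $\Phi_t(n)\,d\,\Phi_t(n)^*\in D$; the symmetric statement for $\Phi_t(n)^*\,d\,\Phi_t(n)$ follows by the same argument. By the grading, the element $\Phi_t(n)\,d\,\Phi_t(n)^*$ lies in $A_t\cdot A_0\cdot A_{-t}\subseteq A_0$, so since $D$ is a MASA in $A_0$ it suffices to verify that it commutes with every element of $D$. The key computation is to expand $\Phi_t(n)$ and $\Phi_t(n)^*$ as integrals over $\hat\Gamma$, perform the substitution $\sigma=\omega\tau^{-1}$ in the resulting double integral, and use that $d\in A_0$ is $\hat\Gamma$-fixed, so that $(\omega\cdot n)\,d\,(\omega\tau^{-1}\cdot n^*)=\omega\cdot[n\,d\,(\tau^{-1}\cdot n^*)]$. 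Carrying out the inner $\omega$-integration collapses to $\Phi_0$, yielding
\[
\Phi_t(n)\,d\,\Phi_t(n)^*\ =\ \int_{\hat\Gamma}\Phi_0\bigl[n\,d\,(\tau\cdot n^*)\bigr]\,\langle\tau,t\rangle\,d\tau.
\]

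Now observe that $\tau\cdot n$ is again a normalizer of $D$, because $\hat\Gamma$ acts by automorphisms fixing $D$ pointwise; consequently $M_\tau:=n\,d\,(\tau\cdot n^*)$ is a product of normalizers (with a $D$-element interspersed) and therefore lies in $N$. So the integrand is $\Phi_0$ of a normalizer. The main obstacle is the auxiliary claim that $\Phi_0(M)\in D$ for every $M\in N$; granting it, the integrand is a continuous $D$-valued function on $\hat\Gamma$ and, since $D$ is norm-closed, the integral lies in $D$.

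To establish the auxiliary claim, I plan to use that $M\,d'\,M^*\in D$ for every $d'\in D$ (the defining property of a normalizer). Writing $M=\Phi_0(M)+M'$ with $M'=\sum_{s\neq0}\Phi_s(M)$ and comparing the degree-$0$ component of both sides of $M\,d'\,M^*\in D\subseteq A_0$ gives the identity $M\,d'\,M^*=\Phi_0(M)\,d'\,\Phi_0(M)^*+\sum_{s\ne 0}\Phi_s(M)\,d'\,\Phi_s(M)^*$. Bracketing with an arbitrary $d''\in D$ and extracting degree-$0$ contributions shows that $\Phi_0(M)\,d'\,\Phi_0(M)^*$ commutes with $d''$, and a parallel computation starting from $M^*\,d'\,M\in D$ yields the same for $\Phi_0(M)^*\,d'\,\Phi_0(M)$; since $D$ is MASA in $A_0$, both are forced into $D$, which in turn (using $d'$ ranging over $D$, in particular an approximate unit) forces $\Phi_0(M)$ itself to normalize $D$ and, being a homogeneous element of degree $0$ lying in $A_0\cap N$ with the additional commutation property extracted, to lie in $D$. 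I expect this auxiliary claim to be the only nontrivial step; the rest of the proposition is then formal.
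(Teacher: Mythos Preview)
Your reductions (i) and (iii) are fine, and your integral formula
\[
\Phi_t(n)\,d\,\Phi_t(n)^*=\int_{\hat\Gamma}\Phi_0\bigl[n\,d\,(\tau\cdot n^*)\bigr]\,\langle\tau,t\rangle\,d\tau
\]
is correct.  The gap is the auxiliary claim: it is \emph{false} that $\Phi_0(M)\in D$ for every $M\in N$.  Indeed, take any normalizer $M\in N(A_0,D)\setminus D$ (such elements exist whenever $D\subsetneq A_0$, since normalizers span $A_0$); then $M\in A_0$, so $\Phi_0(M)=M\notin D$.  Concretely, with $A=M_2\otimes C(\bbT)$, $\Gamma=\bbZ$ acting via the dual $\bbT$-action on the $C(\bbT)$ factor, $A_0=M_2$, $D$ the diagonal matrices: the matrix unit $e_{12}$ is a normalizer of $D$ with $\Phi_0(e_{12})=e_{12}\notin D$.

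Your sketched proof of the auxiliary claim breaks at the ``extracting degree-$0$ contributions'' step: every term $\Phi_s(M)\,d'\,\Phi_s(M)^*$ already lies in $A_0$, so the grading gives no further separation of the sum, and you cannot conclude that the individual $s=0$ term commutes with $d''$.  (Even if it did, you would only obtain $\Phi_0(M)\in N_h\cap A_0$, which is exactly the content of the proposition for $t=0$ and says nothing about membership in $D$.)  The weaker statement you actually need---that $\Phi_0(n\,d\,(\tau\cdot n^*))\in D$ for the specific $M_\tau$ arising in the integral---happens to be true, but it is equivalent by Fourier inversion to the conclusion $\Phi_t(n)\,d\,\Phi_t(n)^*\in D$ for all $t$, so this route is circular.

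The paper avoids the integral altogether and argues directly that $\Phi_t(n)^*d\,\Phi_t(n)$ commutes with every $e\in D$.  The trick is to insert a factor of $n^*n$ and use that $\omega\cdot(n^*ne)=n^*ne$ and $\omega\cdot(nen^*)=nen^*$ (both lie in $D\subseteq A_0$) to pull $e$ through the integral defining $\Phi_t$: one computes
\[
\Phi_t(n)^*d\,\Phi_t(nn^*n)\,e=\Phi_t(n)^*d\,\Phi_t(nen^*n)=\cdots=e\,\Phi_t(nn^*n)^*d\,\Phi_t(n),
\]
and then replaces $n^*n$ by $(n^*n)^{1/k}$ and lets $k\to\infty$ to remove the auxiliary factor.
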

\begin{proof}
Fix $n\in N$.  By Lemma~\ref{AtDense} it suffices to show $\Phi_t(n)\in N_h$.   Let $d\in D$.
Then $\Phi_t(n)^*d\Phi_t(n)\in A_0$.  For $e\in D$, and
$\omega\in\hat\Gamma$, $\omega\cdot e=e$. So

\begin{align*}
\Phi_t(n)^*d\Phi_t(nn^*n )e&=\Phi_t(n)^*d\int_{\hat\Gamma} \omega\cdot n(\omega\cdot(n^*ne))\<\omega\inv,t\>d\omega=\Phi_t(n)^*d\int_{\hat\Gamma} \omega\cdot(nen^*n)\<\omega\inv,t\>d\omega\\
&=\Phi_t(n)^*dnen^*\Phi_t(n)=\Phi_t(n)^*nen^*d\Phi_t(n)=\int_{\hat\Gamma}\omega\cdot(n^*nen^*)\<\omega,t\>d\omega d\Phi_t(n)\\
&=n^*ne\Phi_t(n)^*d\Phi_t(n)=en^*n\Phi_t(n)^*d\Phi_t(n)=e\Phi_t(nn^*n)^*d\Phi_t(n).
\end{align*}
This relation holds if we replace $n^*n$ by a polynomial in $n^*n$ and by taking limits we see that it holds if we replace $n^*n$ by $(n^*n)^{1/k}$ for any $k\in \N$.  
Since $\lim_k n(n^*n)^{1/k}=n$,  we find that
$\Phi_t(n)^*d\Phi_t(n)$ commutes with every element of $D$.  Since $D$
is a MASA in $A_0$, 
$\Phi_t(n)^*d\Phi_t(n)\in D$. A similar argument shows that
$\Phi_t(n)d\Phi_t(n)^*\in D$.  So $\Phi_t(n)\in N_h$.
\end{proof}

We now define a main object of study.
\begin{definition} \label{defGammaCartan} Let $A$ be $\cs$-algebra topologically graded by a
  discrete abelian group $\Gamma$ and $D$ an abelian $C^*$-subalgebra of
  $A_0$.  We say the pair $(A,D)$ is \gc\ if
\begin{enumerate}
\item $D$ is Cartan in $A_0$,
\item  $N(A,D)$ spans a dense subset of $A$.
\end{enumerate}
\end{definition}
The following observations are simple but important. In
particular, for \gc\
pairs we may  focus on
homogeneous normalizers in place of more general normalizers.

\begin{lemma}\label{nondeg}  Suppose $(A,D)$ is a \gc\ pair.
  The following statements hold. \begin{enumerate}
  \item The span of the homogeneous normalizers, $N_h(A,D)$, is dense in $A$.
    \item  If
  $(e_i)$ is an approximate unit for $A_0$, then $(e_i)$ is an
  approximate unit for $A$.
  \item For any $n\in N(A,D)$, $n^*n$ and $nn^*$ belong to $D$.
\item Any approximate unit for $D$ is an approximate unit for $A$.
  \end{enumerate}
\end{lemma}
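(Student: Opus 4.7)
The plan is to establish the parts in the order (1), (3) restricted to homogeneous normalizers, (2), (4), and finally (3) in full generality, since later parts bootstrap off earlier ones. Part (1) is immediate from Proposition~\ref{NhDense}: for every $n\in N(A,D)$ one has $n\in\overline{\spn}\{\Phi_t(n):t\in\Gamma\}$ with each $\Phi_t(n)\in N_h$, so the density of $N(A,D)$ in $A$ guaranteed by the $\Gamma$-Cartan hypothesis passes to $N_h(A,D)$.

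For (3) when $n\in N_h$, Lemma~\ref{At prop} gives $n^*n\in A_0$, and for each $d\in D$ the identity $(n^*n)d(n^*n)=n^*(ndn^*)n\in n^*Dn\subseteq D$ exhibits $n^*n$ as a positive normalizer of $D$ inside the Cartan pair $(A_0,D)$. Renault's \cite[Proposition~4.7]{Ren08} then yields $(n^*n)^2\in D$, and since $n^*n\geq 0$ and $D$ is closed under continuous functional calculus, $n^*n=\sqrt{(n^*n)^2}\in D$; applying the same reasoning to $n^*$ gives $nn^*\in D$. With this in hand, (2) is a routine $\eps/3$-argument: if $(e_i)$ is an approximate unit for $A_0$ and $n\in N_h$, then both $n^*n$ and $nn^*$ lie in $D\subseteq A_0$, so the \cstar-identities
\[\|(1-e_i)n\|^2=\|(1-e_i)nn^*(1-e_i)\|\dstext{and}\|n(1-e_i)\|^2=\|(1-e_i)n^*n(1-e_i)\|\]
tend to zero; linearity, the bound $\|e_i\|\leq 1$, and the density from part (1) extend $e_ia\to a$ and $ae_i\to a$ to every $a\in A$. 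Part (4) is then immediate: since $D$ is Cartan in $A_0$, $D$ contains an approximate unit for $A_0$, so a standard argument shows that any approximate unit of $D$ is an approximate unit of $A_0$, and (2) promotes it to an approximate unit for $A$.

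Finally, for the general case of (3), let $n\in N(A,D)$ and pick an approximate unit $(d_\lambda)$ of $D$. By (4), $d_\lambda n\to n$ in $A$, so $n^*d_\lambda n\to n^*n$ in norm; each $n^*d_\lambda n$ lies in $D$ by the normalizer condition, and closedness of $D$ gives $n^*n\in D$, and symmetrically $nn^*\in D$. The main obstacle is the need for this bootstrap: the fully general (3) ultimately requires $D$ to contain an approximate unit for all of $A$, which rests on the approximate-unit statement (2) and hence on the homogeneous version of (3). Decomposing an arbitrary normalizer into its homogeneous components via the expectations $\Phi_t$, as supplied by Proposition~\ref{NhDense}, is precisely what gets the initial homogeneous step off the ground.
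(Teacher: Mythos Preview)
Your proof is correct, but it takes an unnecessary detour compared to the paper's argument. The key difference lies in part~(2): you first establish the homogeneous case of (3) (that $n^*n,nn^*\in D$ for $n\in N_h$) via an appeal to Renault's Cartan theory, and then use $n^*n,nn^*\in D\subseteq A_0$ to prove (2). The paper instead observes that for (2) one needs only $n^*n,nn^*\in A_0$ for $n\in N_h$, which is immediate from Lemma~\ref{At prop} (the grading property) with no Cartan-theoretic input at all. With (2) in hand, the paper proves (3) for \emph{all} $n\in N$ in one stroke: since $D$ contains an approximate unit $(e_i)$ for $A_0$ (part of the Cartan definition), part (2) makes it an approximate unit for $A$, and then $n^*e_i n\in D$ converges to $n^*n$. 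Part (4) then follows from (3) by the same computation. So the paper's order is (1)\,$\to$\,(2)\,$\to$\,(3)\,$\to$\,(4), whereas you do (1)\,$\to$\,(3)$|_{N_h}$\,$\to$\,(2)\,$\to$\,(4)\,$\to$\,(3).

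Your route works, but the appeal to \cite[Proposition~4.7]{Ren08} is doing more than necessary (and in fact what you are really using is the elementary observation that in any Cartan pair $(B,D)$ a normalizer $m$ satisfies $m^*m\in D$ because $m^*e_im\to m^*m$ with $e_i\in D$---exactly the paper's argument for (3), applied inside $(A_0,D)$). The paper's route is slightly cleaner in that it avoids invoking any external result and separates cleanly the grading input ($n^*n\in A_0$) from the Cartan input ($D$ contains an approximate unit).
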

\begin{proof}
  As noted in Remark~\ref{tgchar}, a topological grading arises
from an action of a compact abelian group.  By
Proposition~\ref{NhDense}, $N_h(A,D)$
spans a dense subset of $A$.

Now suppose $(e_i)$ is an (not necessarily countable) approximate unit for $A_0$.
Let $n\in N_h$.  Then $nn^*$ and $n^*n$
  belong to $A_0$.   Since $(e_i)$ is an approximate unit for $A_0$,
  \begin{equation}\label{nondeg1}
    (e_i n-n)(e_i n-n)^*=e_i nn^* e_i-nn^*
    e_i -e_i nn^*+nn^*\rightarrow 0,
  \end{equation}
  whence $e_i n\rightarrow
  n$.  Similarly, $ne_i \rightarrow n$.   Hence for any $a\in
  \spn N_h$, $e_i a\rightarrow a$ and $ae_i \rightarrow
  a$.  Since $\spn N_h$ is dense in $A$,
  $(e_i)$ is an approximate unit for $A$.

 Since $(A_0,D)$ is a Cartan pair, $D$ contains an approximate unit
 $(e_i)$ for $A_0$.  By part (2), $(e_i)$ is also an
 approximate unit for $A$.   Then for any $n\in N(A,D)$,
 $D\ni n^*e_i n\rightarrow n^*n$, so $n^*n\in D$. Likewise, $nn^*\in D$. 

Finally, if $(e_i)$ is an approximate unit for $D$ and $n\in N$,
\eqref{nondeg1} together with the fact that $nn^*\in D$, gives
$e_i n\rightarrow n$; likewise $ne_i\rightarrow n$.  As
before, $\overline{\spn N}=A$ implies $(e_i)$ is an approximate
unit for $A$.
\end{proof}


\section{Twists from $\Gamma$-Cartan pairs}\label{RI}

Throughout this section, we consider a fixed \gc\ pair $(A,D)$.
The purpose of this section is 
to define a twist
$\hat D \times \T \to \Sigma \to G$ from the pair $(A,D)$ so that $A \cong C^*_r(\Sigma;G)$ and
$D \cong C_0(\go)$. This task is completed in Theorem~\ref{mainthm1}.
Our methods follow those found in Kumjian~\cite{Kum86}
and Renault~\cite{Ren08}, and also use techniques from Pitts~\cite{Pitt12}.

Renault and Kumjian construct a twist from the Weyl groupoid
associated to a Cartan pair by first considering its groupoid $G$ of
germs and then using the multiplicative structure of the normalizers to
construct the twist as an extension $\Sigma$ of $G$ by $\bbT\times
\unit{G}$.  Finally, they recognize $\Sigma$ as a family of linear
functionals on $A$.

To a certain extent, we follow the Kumjian-Renault approach.  We will
define $\Sigma$ and $G$ in two ways.  We first construct  sets
$\Sigma$ and $G$ using the Weyl groupoid (the topologies and groupoid
operations come later).   After doing so,
we identify $\Sigma$ as a family of linear functionals and $G$ as as a
family of (non-linear) functions on $A$.  The product on $\Sigma$ and
$G$ is obtained by translating the product on $A$ to $\Sigma$
utilizing the first approach, and the second approach makes defining
the topologies on $\Sigma$ and $G$ straightforward.  Viewing $\Sigma$
and $G$ as functions highlights the
parallel between the Gelfand theory for commutative $C^*$-algebras and
relationship of the twist and the pair $(A,D)$ more transparent.

To begin, we fix some notation.
  Write
\[X:=\hat D.\]
We 
  generally identify $D$ with $C_0(X)$; thus for
  $x\in X$ and $d\in D$, we write $d(x)$ instead of $\hat d(x)$.

Let $E$ denote the faithful conditional expectation $E \colon A_0 \rightarrow D$.
By \cite{Rae18} there is a corresponding strong action of $\hat{\Gamma}$ on $A$.
We denote by $\Phi_t$ the completely contractive map $\Phi_t \colon A \rightarrow A_t$ as defined in Equation~\eqref{phit}.
Set \[\Delta:=E\circ \Phi_0.\]  By Lemma~\ref{Phi0faith},  $\Delta$ is a faithful
conditional expectation of $A$ onto $D$.

For $n\in N$, Lemma~\ref{nondeg} gives $n^*n, nn^*\in D$; let 
\[\dom(n):=\{x\in \hat D: n^*n(x)>0\}\dstext{and}\ran(n):=\{x\in \hat{D}:
  nn^*(x)>0\}.\] By the definition of normalizer, $ndn^*\in D$ for all
$d\in D$.  So $N_h$ acts on $D$ by conjugation.  As $D$ is abelian,
this induces a partial action $\alpha$ on the spectrum.  The following
result of Kumjian gives a precise description of this action.
\begin{prop}\cite[Proposition 1.6]{Kum86} \label{prop def alpha} Let
$n\in N$.  Then there exists a unique partial homeomorphism $\alpha_n:
\dom(n)\to \ran(n)$ such that for each $d\in D$ and $x\in\dom (n)$,
\begin{equation*} (n^*dn)(x)=d(\alpha_n(x))\, (n^*n)(x).
\end{equation*} 
\end{prop}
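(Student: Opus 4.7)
The plan is to construct $\alpha_n(x)$ for each $x \in \dom(n)$ as the character of $D$ determined by the ratio $d \mapsto (n^*dn)(x)/(n^*n)(x)$, then read off the defining equation, uniqueness, and continuity via Gelfand duality.

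For $x \in \dom(n)$, I would first define the linear functional
\[
\phi_x(d) := \frac{(n^*dn)(x)}{(n^*n)(x)}, \qquad d \in D.
\]
This is well-defined because $n^*Dn \subseteq D$ by Lemma~\ref{nondeg}(3) and the denominator is positive. The crucial point is that $\phi_x$ is multiplicative, using commutativity of $D$ together with $nn^* \in D$: namely,
\[
(n^*d_1 n)(n^*d_2 n) = n^*d_1 (nn^*) d_2 n = n^*d_1 d_2 (nn^*) n = (n^*d_1 d_2 n)(n^*n),
\]
and evaluating at $x$ and dividing by $(n^*n)(x)^2$ gives $\phi_x(d_1)\phi_x(d_2) = \phi_x(d_1 d_2)$. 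The functional $\phi_x$ is $*$-preserving and nonzero, since $\phi_x(n^*n) = (n^*n)(x) > 0$, so by Gelfand duality it corresponds to a unique point $\alpha_n(x) \in X = \hat D$ satisfying $d(\alpha_n(x)) = \phi_x(d)$ for every $d \in D$. This is exactly the identity in the statement, and uniqueness of $\alpha_n$ is immediate since this identity pins down the character at each $x$.

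Next I would verify the remaining claims. An analogous computation yields $\phi_x(nn^*) = (n^*n)(x) > 0$, so $\alpha_n(x) \in \ran(n)$. For continuity, note that for each $d \in D$ the composition $d \circ \alpha_n$ agrees on $\dom(n)$ with the continuous function $(n^*dn)/(n^*n)$, whose denominator never vanishes on $\dom(n)$. Since $D = C_0(X)$ separates points and generates the topology of $X$, this forces $\alpha_n \colon \dom(n) \to X$ to be continuous. Finally, running the same construction for $n^*$ yields a continuous map $\alpha_{n^*} \colon \ran(n) \to \dom(n)$, and applying the two defining identities in succession gives
\[
d(\alpha_{n^*}(\alpha_n(x))) = \frac{(ndn^*)(\alpha_n(x))}{(nn^*)(\alpha_n(x))} = \frac{(n^*n)(x)\,d(x)\,(n^*n)(x)}{(n^*n)(x)\,(n^*n)(x)} = d(x)
\]
for all $d \in D$, so $\alpha_{n^*} = \alpha_n^{-1}$ and $\alpha_n$ is a partial homeomorphism.

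The main obstacle, mild as it is, is identifying the correct normalization: the naive map $d \mapsto n^*dn$ is $*$-linear but is not itself multiplicative on $D$, the multiplicative defect being exactly the extra factor $n^*n$ appearing above. Dividing pointwise by $(n^*n)(x)$ converts it into a character at each $x \in \dom(n)$, and from there Gelfand duality and the symmetric treatment of $n^*$ supply everything else.
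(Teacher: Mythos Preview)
The paper does not supply its own proof of this proposition; it is simply cited from Kumjian~\cite{Kum86}. Your argument is the standard one and is essentially correct: build the character $\phi_x$ from the normalized conjugation, invoke Gelfand duality, and use the symmetric construction for $n^*$ to obtain the inverse.

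One small slip: the claim that $\phi_x(n^*n)=(n^*n)(x)$ is not immediate, since $n^*(n^*n)n$ need not equal $(n^*n)^2$ without knowing $n$ commutes with $n^*n$. You do not actually need this; the computation you give two lines later, $\phi_x(nn^*)=(n^*(nn^*)n)(x)/(n^*n)(x)=((n^*n)^2)(x)/(n^*n)(x)=(n^*n)(x)>0$, is the correct one and simultaneously shows $\phi_x\neq 0$ and $\alpha_n(x)\in\ran(n)$. Alternatively, running an approximate unit $e_i$ for $D$ through $\phi_x$ gives $\phi_x(e_i)\to 1$, which also shows $\phi_x\neq 0$. Also, $n^*Dn\subseteq D$ is the definition of normalizer rather than Lemma~\ref{nondeg}(3); that lemma is what gives you $nn^*\in D$, which is precisely what drives the multiplicativity calculation.
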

When the action is clear from the context, we will sometimes write,
\[n.x:=\alpha_n(x).\]  
By \cite[Lemma~4.10]{Ren08} (or \cite[Corollary~1.7]{Kum86}), for $n,m\in N$ and $d\in D$ we have 
\[
\alpha_n\circ\alpha_m=\alpha_{mn},\quad \alpha_{n^*}=\alpha_{n}\inv, \text{ and}\quad \alpha_d=\id_{\supp'(d)}.
\]The collection $\{\alpha_n:n\in N\}$ is an
inverse semigroup, sometimes called the \textit{Weyl semigroup} of the
inclusion $(A,D)$.

Dual to the Weyl semigroup is a collection of partial automorphisms
$\{\theta_n: n\in N\}$ of $D$.
Given $n\in N$, $\overline{nn^*D}$ and $\overline{n^*nD}$ are ideals
of $D$ whose Gelfand spaces may be identified with $\ran(n)$
and  $\dom(n)$ respectively. 
By~\cite[Lemma~2.1]{Pitts17}, the map
$nn^*D \ni d\mapsto n^*dn\in n^*D n$ extends uniquely to a
$*$-isomorphism
$\theta_n: \overline{nn^*D}\rightarrow \overline{n^*nD}$ such that for
every $d\in \overline{nn^*D}$, \begin{align}
                                 dn&=n\theta_n(d)\label{thetaintertw}
                                 \\
                                 \intertext{and for every
$x\in \dom(n)$,} 
                                 \theta_n(d)(x)&=d(\alpha_n(x)).  \label{thetaeval}
                               \end{align}

\begin{lemma}\label{exists h} Suppose $n\in N_h(A,D)$ and $x\in X$ such
that $\Delta(n)(x)\neq 0$.  Then $x$ is in the interior of the set of fixed
points of $\alpha_n$ and there exists $h\in D$ such that $h(x)=1 $ and
$nh=hn\in D$.
\end{lemma}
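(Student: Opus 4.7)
My plan would proceed in three stages: reduce to $n\in A_0$, prove the interior fixed-point statement via a Cauchy--Schwarz argument, and then construct $h$ and verify the algebraic conclusion.

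First I would reduce to the case $n\in A_0$. Since $n\in N_h$, there is some $t\in\Gamma$ with $n\in A_t$. By Lemma~\ref{At prop}(4), $\Phi_0(n)=0$ whenever $t\neq 0$, so the hypothesis $\Delta(n)(x)=E(\Phi_0(n))(x)\neq 0$ forces $t=0$. Hence $n\in N(A_0,D)$, $\Delta(n)=E(n)$, and the problem becomes a statement about the Cartan pair $(A_0,D)$ with $E(n)(x)\neq 0$.

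Next, to show $x$ lies in the interior of the fixed-point set $F:=\{y\in\dom(n):\alpha_n(y)=y\}$, I would argue by contradiction. If every neighborhood of $x$ contains some $y\in\dom(n)$ with $\alpha_n(y)\neq y$, then by Hausdorffness of $X=\widehat D$ pick $\phi\in D$ with $\phi(y)=1$ and $\phi(\alpha_n(y))=0$. Computing, $E(\phi n)=\phi E(n)$ has value $E(n)(y)$ at $y$, while the intertwining relation from Proposition~\ref{prop def alpha} gives
\[
(\phi n)^*(\phi n)=n^*\phi^2 n=(\phi^2\circ\alpha_n)\,n^*n,
\]
which vanishes at $y$ by the choice of $\phi$. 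Cauchy--Schwarz for the conditional expectation $E$ yields $|E(n)(y)|^2=|E(\phi n)(y)|^2\leq E((\phi n)^*(\phi n))(y)=0$, so $E(n)(y)=0$. Letting $y\to x$ contradicts continuity of $E(n)$.

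Finally, pick an open neighborhood $U$ of $x$ with $U\subseteq F^\circ$. Injectivity of $\alpha_n$ forces $\alpha_n^{-1}(U)\cap\dom(n)=U$, since if $y\in\dom(n)$ and $\alpha_n(y)\in U\subseteq F$ then $\alpha_n^2(y)=\alpha_n(y)$, so $y=\alpha_n(y)\in U$. Choose $h\in D$ with $h(x)=1$ and $\supp(h)\subseteq U$; then $h\circ\alpha_n=h$ on $\dom(n)$, and~\eqref{thetaintertw} gives $hn=n\,\theta_n(h)=n(h\circ\alpha_n)=nh$. The main obstacle is verifying $nh\in D$. For this I would invoke Renault's theorem applied to the Cartan pair $(A_0,D)$: $A_0\cong C^*_r(\Sigma_0;G_0)$ with $G_0$ topologically principal and $D\cong C_0(G_0^{(0)})$. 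In this picture $n$ is a section supported on an open bisection $B_n\subseteq G_0$, and $\alpha_n|_U=\id$ places $B_n\cap r^{-1}(U)$ inside the isotropy subgroupoid $G_0'$. Topological principality of $G_0$---equivalently, the interior of $G_0'$ equals $G_0^{(0)}$---together with openness of this slice forces $B_n\cap r^{-1}(U)\subseteq G_0^{(0)}$. Hence $nh$ is supported inside $G_0^{(0)}$, giving $nh\in D$ as required. The hard part is this last step: without the Cartan structure theorem, the algebraic computation $E(((n-E(n))h)^*(n-E(n))h)=h^2(n^*n-|E(n)|^2)$ only reduces the problem to the pointwise Cauchy--Schwarz saturation $n^*n=|E(n)|^2$ near $x$, which is itself equivalent to the local triviality of the Weyl bisection.
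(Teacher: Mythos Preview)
Your argument is correct, but takes a different route from the paper in the last step, and that route carries an avoidable cost.

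For the first two stages you do essentially what the paper does, only more explicitly: the paper cites \cite[Lemma~2.5]{Pitt12} to conclude that $\Delta(n)(y)=0$ whenever $\alpha_n(y)\neq y$, while you reprove this via the Cauchy--Schwarz inequality for $E$. That is fine and arguably more self-contained.

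The divergence is in establishing $nh\in D$. The paper invokes \cite[Lemma~2.15]{Pitt12} to obtain $h$ with $nh=hn\in D'$, and then simply observes that $nh\in A_0\cap D'=D$ because $D$ is a MASA in $A_0$. You instead appeal to Renault's structure theorem for the Cartan pair $(A_0,D)$ and argue geometrically inside the Weyl groupoid. This works, but Renault's theorem as stated in \cite{Ren08} carries a second-countability hypothesis that the paper deliberately avoids; the citations to \cite{Pitt12} are precisely what allow the argument to go through without separability.

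More to the point, you already have everything needed for the lighter MASA argument and need not invoke either Renault or Pitts's Lemma~2.15. Your own computation showing $hn=nh$ (via $\theta_n(h)=h$ for $h$ supported in $U\subseteq F^\circ$) applies verbatim with $h$ replaced by $hd$ for any $d\in D$, since $hd$ is again supported in $U$. Hence $(hd)n=n(hd)$ for all $d\in D$, which gives
\[
(nh)d=n(hd)=(hd)n=(dh)n=d(hn)=d(nh),
\]
so $nh\in A_0\cap D'=D$. This replaces your final paragraph entirely and sidesteps the ``Cauchy--Schwarz saturation'' obstacle you identified.
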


\begin{proof} First note $n\in A_0$ because
  $0\neq \Delta(n)(x)= E(\Phi_0(n))(x)$, and thus $\Phi_0(n)\neq 0$.
  Furthermore, $x\in \dom(n)$ and \cite[Lemma~2.5]{Pitt12} gives
  $\alpha_n(x)=x$.  We claim that $x$ is actually in the interior of
  the set of fixed points of $\alpha_n$.  If not, then there exists a net
  $(x_i)$ in $\dom(n)$ such that $\alpha_n(x_i)\neq x_i$ and
  $x_i\to x$.  Then $\Delta(n)(x_i)\to \Delta(n)(x)\neq 0$.  However,
  by \cite[Lemma~2.5]{Pitt12} again, $\Delta(n)(x_i)=0$ for all $i$, a
  contradiction.

Now let $F$ be the interior of the set of fixed points of $\alpha_n$ and
$J:=\{d\in D: \supp d\subseteq F \}$.  For $S \subseteq D$ let
$$ S^\bot = \{a\in D \colon ax=0 \text{ for all }x\in S\}. $$
Note that $J^{\bot \bot}$ is the fixed point ideal $K_0$ for $n$ (see
\cite[Definition~2.13]{Pitt12}).  Then by \cite[Lemma~2.15]{Pitt12}
there exists $h\in D$ with $h(x)=1$ and $nh=hn\in D'$.  But $n\in A_0$
and $h\in D$, so $nh\in A_0\cap D'= D$ because $D$ is maximal abelian
in $A_0$.  Thus $nh=hn\in D$.
This completes the proof.
\end{proof}

The following is an interesting structural fact about the relationship
between $\Delta$ and the action of $N_h$ on $D$, which is
used 
when defining the inverse operation on $\Sigma$.
\begin{proposition}\label{ndel} For any $n\in N_h$ and $a\in A$,
  \[n^*\Delta(a)n=\Delta(n^*an).\]
\end{proposition}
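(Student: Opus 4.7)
The plan is to use the homogeneity $n \in A_t$ to reduce the identity to a statement about the Cartan pair $(A_0, D)$. Since $\omega \cdot n = \langle\omega, t\rangle n$ for every $\omega \in \hat\Gamma$, a direct computation gives $\omega \cdot (n^* a n) = n^* (\omega \cdot a) n$, and integrating over $\hat\Gamma$ yields $\Phi_0(n^* a n) = n^* \Phi_0(a) n \in A_0$. Since $\Delta = E \circ \Phi_0$, this reduces the proposition to
\[
E(n^* b n) = n^* E(b) n \qquad \text{for every } b \in A_0.
\]

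Both sides lie in $D$, so I would verify this pointwise at $x \in X = \hat D$. If $(n^*n)(x) = 0$, the bound $(n^* d n)(x) = 0$ for all $d \in D$ (obtained by testing against $e \in D$ with $e(x) = 1$ and $e \cdot n^*n = 0$, which forces $ne = 0$) gives $(n^* E(b) n)(x) = 0$, while the Kadison--Schwarz inequality combined with $b^* n n^* b \le \|b\|^2 n n^*$ yields $|E(n^* b n)(x)|^2 \le \|b\|^2 (n^*n)^2(x) = 0$.

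For $x \in \dom n$, set $y := \alpha_n(x)$ and pass to the GNS representation $(\pi_x, \sH_x, \xi_x)$ of the state $\rho_x := x \circ \Delta$ on $A$. The unit vector $\eta := \pi_x(n) \xi_x / \sqrt{(n^*n)(x)}$ satisfies $\pi_x(d) \eta = d(y) \eta$ for all $d \in D$: indeed, the vector state $\sigma(a) := \langle \pi_x(a)\eta,\eta\rangle$ restricts to the character $y$ on $D$ by Proposition~\ref{prop def alpha}, and this forces $\eta$ to be a $D$-eigenvector. Since $\sigma(b) = \Delta(n^* b n)(x) / (n^*n)(x)$ and $(n^* E(b) n)(x)/(n^*n)(x) = E(b)(y)$, the pointwise identity becomes $\sigma(b) = E(b)(y)$ for $b \in A_0$. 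By linearity and density of $N(A_0, D)$ in $A_0$, it suffices to verify this for normalizers $b = m \in N(A_0, D)$. The same eigenvector argument applied to $\pi_x(m) \eta$ shows it is a $D$-eigenvector for the character $\alpha_m(y)$ (when $y \in \dom m$), so $D$-orthogonality forces $\sigma(m) = 0 = E(m)(y)$ whenever $\alpha_m(y) \ne y$ or $y \notin \dom m$.

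The main obstacle is the fixed-point case $y \in F_m := \{z : \alpha_m(z) = z\}$. For $y \in \mathrm{int}(F_m)$, I would pick $f \in D$ with $f(y) = 1$ and $\supp f \subseteq \mathrm{int}(F_m)$; then $\alpha_{fm} = \mathrm{id}$, and since $D$ is a MASA in $A_0$, a polar-decomposition argument in $A_0^{**}$ forces $fm \in D$. Evaluating $\sigma(fm)$ in two ways---as $f(y)\sigma(m)$ via the $D$-eigenstate property of $\eta$, and as $(fm)(y) = f(y) E(m)(y)$ since $fm \in D$---and dividing by $f(y) = 1$ yields $\sigma(m) = E(m)(y)$. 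The boundary case $y \in F_m \setminus \mathrm{int}(F_m)$ follows by continuity, since $\partial F_m$ is nowhere dense in $X$ and both sides of the target identity are continuous functions of $x$. The crux is the use of $D$-maximality in $A_0$ to identify $fm$ as diagonal when $\alpha_{fm} = \mathrm{id}$; this is the essential Cartan-structure input that enables the proof.
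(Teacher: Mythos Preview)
Your argument is correct, and it takes a genuinely different route from the paper's.

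The paper never separates $\Delta = E\circ\Phi_0$ into its two factors; instead it works directly with $\Delta$ and proves the auxiliary identity $n^*\Delta(nm)n = n^*n\,\Delta(mn)$ for $m\in N_h$, reducing to the pointwise statement $\Delta(nm)(\alpha_n(x)) = \Delta(mn)(x)$.  That equality is then established by a two-case analysis: if one side is nonzero, Lemma~\ref{exists h} produces an element $h\in D$ with $hmn\in D$ (or $knm\in D$), and a short chain of equalities using $\alpha_n$ and $\alpha_{n^*}$ shows the two sides coincide.  No continuity argument is needed because ``both sides zero'' is trivially handled.  Your approach is more structural: the observation $\Phi_0(n^*an) = n^*\Phi_0(a)n$ immediately isolates the content of the proposition as a statement purely about the Cartan pair $(A_0,D)$, namely $E(n^*bn) = n^*E(b)n$ for $b\in A_0$, and you then verify this via the eigenvector structure of the GNS space $\sH_{\rho_x}$.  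Both arguments rest on the same Cartan input---that a normalizer whose partial homeomorphism is the identity must lie in $D$ (your ``$fm\in D$'' step, the paper's Lemma~\ref{exists h})---but your packaging makes it transparent that the $\Gamma$-grading plays only the trivial role of passing through $\Phi_0$.  The price is the density/continuity step at $\partial F_m$, which the paper's ``nonzero iff nonzero'' case split avoids; on the other hand, your framework would adapt with essentially no change to any inclusion where $D$ is Cartan in a subalgebra $B\subseteq A$ and $n$ normalizes $D$ with $n^*Bn\subseteq B$.
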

\begin{proof}
To begin,  we claim that for $m\in N_h$, 
\begin{equation}\label{ndel1}
n^*\Delta(nm)n=n^*n\Delta(mn).
\end{equation} 
Since the terms on both sides of \eqref{ndel1} belong to $D$, it 
suffices to show that for every $x\in X$
\begin{equation}\label{ndel2}
(n^*\Delta(nm)n)(x)=(n^*n\Delta(mn))(x).
\end{equation} 
As $n=\lim_{k\rightarrow \infty} n(n^*n)^{1/k}$, both sides of
\eqref{ndel2} vanish if $(n^*n)(x)=0$. Thus to obtain~\eqref{ndel2} it suffices to
prove that for $x\in \dom(n)$  
\begin{equation}\label{ndel3}
\Delta(nm)(\alpha_n(x))=\Delta(mn)(x),
\end{equation} and this is what we shall do.

Suppose first that $\Delta(nm)(\alpha_n(x))\neq 0$.  Lemma~\ref{exists
  h} shows there exists $k\in D$ with $k(\alpha_n(x))=1$ and
$nmk=knm\in D.$ Then \begin{align*} \Delta(nm)(\alpha_n(x))&=
                                                             (k\Delta(nm))(\alpha_n(x))=\Delta(knm)(\alpha_n(x))\\
                                                           &=
                                                             (knm)(\alpha_n(x))=\frac{(n^*(knm)
                                                             n)(x)}{(n^*n)(x)}
                                                             =\frac{\Delta((n^*kn)m n)(x)}{(n^*n)(x)}\\
                                                           &=
                                                             \frac{(n^*kn)(x)}{(n^*n)(x)}\Delta(mn)(x)=k(\alpha_n(x))\Delta(mn)(x)=\Delta(mn)(x).     \end{align*}

Next, suppose $\Delta(mn)(x)\neq 0$ and put $y=\alpha_n(x)$.  We do a
similar calculation.  Another
application of Lemma~\ref{exists h} produces $h\in D$ with $h(x)=1$
and $mnh=hmn\in D$.   As $x=\alpha_{n^*}(y)$,
\begin{align*}
  \Delta(mn)(x) &=
                  (mnh)(\alpha_{n^*}(y))=\frac{(n(mnh)n^*)(y)}{(nn^*)(y)}=
                  \frac{\Delta(nm(nhn^*))(y)}{(nn^*)(y)}
  \\
                &=\Delta(nm)(y)\frac{(nhn^*)(y)}{(nn^*)(y)}=\Delta(nm)(\alpha_n(x))h(\alpha_{n^*}(y))=\Delta(nm)(\alpha_n(x))h(x)\\
                &=
                  \Delta(nm)(\alpha_n(x)).
\end{align*}

We have  shown that $\Delta(mn)(x)\neq 0$ if and only if
$\Delta(nm)(\alpha_n(x))\neq 0$, and,  when this occurs,
$\Delta(mn)(x)=\Delta(nm)(\alpha_n(x))$.   Thus~\eqref{ndel3} holds,
completing the proof of the claim.

By varying $m$ and using the facts that $n^*n\in D$
(Lemma~\ref{nondeg}(3)) and $\overline{\spn N_h}=A$,~\eqref{ndel1} implies
that for every $a\in A$,
$n^*\Delta(na)n=(n^*n)\Delta(an)=\Delta(n^*(na)n)$. Therefore, for
every $a\in \overline{nA}$,
\[n^*\Delta(a)n=\Delta(n^*an).\]   Given $k\in \bbN$, there exists a sequence of  polynomials $\{p_j\}$  each of which  vanish
at the origin such that 
$(nn^*)^{1/k}=\lim_j p_j(nn^*)$.  Thus, for  $a\in A$  and $k\in\bbN$,
$(nn^*)^{1/k}a(nn^*)^{1/k}\in \overline{n A}$.  Hence for each
$a\in A$,
\begin{align*}
  n^*\Delta(a) n&=\lim_k n^*(nn^*)^{1/k}\Delta(a)(nn^*)^{1/k}n
                  =\lim_k n^* \left(\Delta((nn^*)^{1/k}a(nn^*)^{1/k})\right)n\\ &=\lim_k
                  \Delta(n^*(nn^*)^{1/k}a(nn^*)^{1/k}n) =\Delta(n^*an).
\end{align*}
This
completes the proof.
\end{proof}

\subsection{Local equivalence relations from homogeneous normalizers} Let
\[\mathcal{G}:=\{(n,x)\in N_h\times X: n^*n(x)\neq 0\}.\]  We now
define two equivalence relations on $\mathcal G$ arising 
as germs of the subsemigroup of the Weyl
semigroup arising from homogeneous normalizers.  While we shall define
the groupoids $\Sigma$ and $G$ in the twist $\Sigma\rightarrow G$ as
functions on $A$, the equivalence relations below will enable us to
define the multiplicative structure on $\Sigma$ and $G$.

\begin{definition}\label{equivrelR}For $(n,x), (n',x')\in
\mathcal{G}$.  Consider
\begin{enumerate}
\item[$(1)$] $x=x'$,
\item[$(2_\Sigma)$] there exist $d, d'\in C_c(X)$ such that
$d(x)>0$, $d'(x)> 0$ and $nd=n'd'$,
\item[$(2_G)$] there exist $d, d'\in C_c(X)$ such that $d(x)\neq 0$, $d'(x)\neq
0$ and $nd=n'd'$.
\end{enumerate} By~\eqref{thetaintertw} and~\eqref{thetaeval},
the latter two conditions may equivalently be replaced with the
following conditions.
\begin{enumerate}
\item[$(2_\Sigma{}')$] There exist $d, d'\in C_c(X)$ such that
  $d(\alpha_n(x)))>0$, $d'(\alpha_n(x)))>  0$ and $dn=d'n'$.
\item[$(2_G{}')$]  There exist $b, b'\in C_c(X)$ such that
  $d(\alpha_n(x)))\neq 0$, $d'(\alpha_n(x)))\neq
    0$ and $dn=d'n'$.
  \end{enumerate}
Note that in conditions $(2_\Sigma)$ and $(2_G)$, we may assume that
$d, d'\in \overline{n^*n D}\cap \overline{n'^*n' D}$; likewise we may
assume $d, d'\in \overline{nn^* D}\cap \overline{n'n'^* D}$ in
conditions $(2_\Sigma{}')$ and $(2_G{}')$.
  
Define $\sim_\Sigma$ as the relation given by $(1)$ and $(2_\Sigma)$ and $\sim_G$ as the relation given by $(1)$
and $(2_G)$.  We omit the proof that these are equivalence relations.
We denote the equivalence classes by $[n,x]_\Sigma,[n,x]_G$
respectively.  We shall omit the subscript when the proof does not depend on
which relation is used. Following Renault \cite{Ren08}, define
\[ \Sigma_{A,D, \Gamma}:=\mathcal{G}/\sim_\Sigma\quad\text{and}\quad G_{A,D, \Gamma}:=
\mathcal{G}/\sim_G.\] We omit the $ A, D, \Gamma$ from the notation and write
$\Sigma$ and $G$ respectively when the inclusion and grading are clear from
context.
\end{definition}

Essentially,  $G$ is a modification of the groupoid of germs of the
$\alpha$ action and $\Sigma$ is a twist on this.  
The following is a useful observation.
\begin{lemma} \label{A0} For $i=1,2$ suppose $(n_i,x_i)\in \mathcal{G}$ and
  $[n_1,x_1] =[n_2,x_2]$.  Then $n_1^*n_2\in A_0$.
\end{lemma}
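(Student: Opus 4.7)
The plan is to exploit the topological grading of $A$ by $\Gamma$ to show that $n_1$ and $n_2$ must lie in the same homogeneous component $A_{t}$; then $n_1^*n_2 \in A_{-t}A_{t} \subseteq A_0$ follows from Lemma~\ref{At prop}(\ref{it3}). Write $n_i \in A_{t_i}$ for $i=1,2$, and pick $d,d' \in C_c(X)\subseteq D$ witnessing $[n_1,x_1]=[n_2,x_2]$, so that $x_1=x_2=:x$ and $n_1 d = n_2 d'$ with $d(x),d'(x)\neq 0$ (this works for both $\sim_\Sigma$ and $\sim_G$, since the condition for $\sim_\Sigma$ is stronger). The strategy is to show this common element is nonzero and then apply linear independence of the homogeneous subspaces.

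First I would verify $n_1 d \neq 0$. Since $n_1 \in N_h$, Lemma~\ref{nondeg}(3) gives $n_1^*n_1 \in D$, so $d^* n_1^* n_1 d = |d|^2\, n_1^*n_1 \in D$, and evaluating at $x$ yields
\[ \bigl((n_1 d)^*(n_1 d)\bigr)(x) = |d(x)|^2\,(n_1^*n_1)(x) \neq 0 \]
because $(n_1,x) \in \mathcal{G}$ forces $(n_1^*n_1)(x)>0$, and $d(x)\neq 0$ by hypothesis. Hence $n_1 d \neq 0$. Now $n_1 d \in A_{t_1}$ and $n_2 d' \in A_{t_2}$ since $d,d' \in D \subseteq A_0$ and each $A_t$ is closed under right multiplication by $A_0$ (Lemma~\ref{At prop}(\ref{it3})). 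The equation $n_1 d = n_2 d'$ places this nonzero element in $A_{t_1}\cap A_{t_2}$; since the family $\{A_t\}_{t\in\Gamma}$ is linearly independent (as $A$ is topologically graded, per Remark~\ref{tgchar}), this forces $t_1 = t_2$. Equivalently, one may apply the projection $\Phi_{t_1}$ to both sides and use Lemma~\ref{At prop}(\ref{it4}): the left side is $n_1 d$ and the right side is $0$ unless $t_1 = t_2$, again yielding a contradiction with $n_1d\neq 0$.

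With $t_1 = t_2 =: t$, Lemma~\ref{At prop}(\ref{it2}),(\ref{it3}) gives $n_1^* \in A_{-t}$ and hence $n_1^* n_2 \in A_{-t+t} = A_0$, which is the desired conclusion. There is really no main obstacle here beyond recognizing that the equivalence relations $\sim_\Sigma$ and $\sim_G$ respect the $\Gamma$-grading via the witnesses $d,d'\in D\subseteq A_0$; the mild care needed is in ruling out $n_1d=0$, which is handled by the normalizer identity $(n_1d)^*(n_1d)=|d|^2 n_1^*n_1\in D$ together with the hypothesis that $x\in\dom(n_1)$.
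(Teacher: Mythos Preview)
Your proof is correct and follows essentially the same approach as the paper's: both use the witnesses $d,d'\in D$ from the equivalence relation together with the $\Gamma$-grading to force the homogeneous degrees of $n_1$ and $n_2$ to agree. The only cosmetic difference is that the paper phrases the argument in terms of $n_1^*n_2$ directly, observing that $d_1^*n_1^*n_2d_2=d_1^*n_1^*n_1d_1$ is a nonzero element of $D\cap A_t$ (where $t$ is the degree of $n_1^*n_2$), whereas you show more directly that $n_1d=n_2d'$ is a nonzero element of $A_{t_1}\cap A_{t_2}$; these are the same idea up to a left multiplication by $n_1^*$.
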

\begin{proof}
  We do this only for $\sim_G$, leaving the obvious modifications for $\sim_\Sigma$ to
  the reader.  By definition of $\sim_G$, $x_1=x_2=:x$ and there exist
  $d_1, d_2\in D$ with $d_i(x)\neq 0$ and $n_1d_1=n_2d_2$.  Then
  $n_id_i\in N_h$ and
  $ d_1^*n_1^*n_2d_2=d_1^*n_1^*n_1d_1$ is a non-zero element of $D$.
  Since $n_i\in N_h$, there exists $t\in\hat\Gamma$ such that
  $n_1^*n_2\in A_t$.  But $A_t$ is a $D$-bimodule, so
  $d_1^*n_1^*n_2 d_2\in A_0\cap A_t$, whence $t=0$.
\end{proof}

 It is useful to have an alternative description of the equivalence
 relations $\sim_\Sigma$ and $\sim_G$ before continuing.

\begin{definition}\label{equivrelD} For $(n,x), (n',x')\in
\mathcal{G}$,   consider the properties
\begin{enumerate}
\item[$(i)$] $x=x'$,
\item[$(ii_\Sigma)$] $\Delta(n^*n')(x)>0$, and
\item[$(ii_G)$] $\Delta(n^*n')(x)\neq 0$.
\end{enumerate} Define $\approx_\Sigma$ to be the  relation
on $\mathcal{G}$ given by
$(i)$ and $(ii_\Sigma)$, and define $\approx_G$ be the relation on
$\mathcal G$ given by $(i)$
and $(ii_G)$.
\end{definition}

\begin{proposition}\label{R=D} The relations $\approx_\Sigma$ and
$\sim_\Sigma$ are the same.  Likewise, the relations $\approx_G$ and
$\sim_G$ are the same.
\end{proposition}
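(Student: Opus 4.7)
The claim asserts two parallel facts: $\sim_\Sigma$ coincides with $\approx_\Sigma$, and $\sim_G$ coincides with $\approx_G$. Both are proved by the same argument with ``positive'' replaced by ``nonzero'' in the $\sim_G$ case. I will treat them uniformly.

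The easy direction, that $\sim$ implies $\approx$, is essentially a one-line calculation: given $d, d' \in C_c(X)$ with $nd = n'd'$ and $d(x), d'(x)$ nonzero (resp.\ positive), left-multiply by $n^*$ to obtain $n^*n\,d = n^*n'\,d'$, apply the $D$-linear conditional expectation $\Delta$ (which fixes $D$) to get $(n^*n)\,d = \Delta(n^*n')\,d'$, and evaluate at $x$. Since $(n^*n)(x) > 0$ and both $d(x), d'(x)$ are nonzero (resp.\ positive), $\Delta(n^*n')(x)$ must be nonzero (resp.\ positive).

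For the reverse direction, assume $\Delta(n^*n')(x)$ is nonzero. I would first verify that $n^*n' \in A_0 \cap N$. Writing $n \in A_t$, $n' \in A_{t'}$, we have $n^*n' \in A_{t'-t}$, and by Lemma~\ref{At prop}\eqref{it4} the map $\Phi_0$ annihilates $A_s$ for $s \neq 0$; since $\Delta(n^*n')(x) \neq 0$ forces $\Phi_0(n^*n') \neq 0$, we conclude $t = t'$ and $n^*n' \in A_0 \cap N = N(A_0,D)$. Next, apply Lemma~\ref{exists h} to $n^*n'$: this yields a function $h \in D$ with $h(x) = 1$ and $h\,n^*n' = n^*n'\,h \in D$, and places $x$ in the interior of the fixed-point set of $\alpha_{n^*n'}$. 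Using $\alpha_{n^*n'} = \alpha_{n^*} \circ \alpha_{n'}$ together with $\alpha_{n^*} = \alpha_n^{-1}$ (and the fact that $x \in \dom(n) \cap \dom(n')$), the fixed-point condition localises to $\alpha_n|_U = \alpha_{n'}|_U$ on some open neighbourhood $U$ of $x$. After multiplying $h$ by a bump in $C_c(X)$, I may further arrange $\supp h \subseteq U \subseteq \dom(n) \cap \dom(n')$ without changing $h(x) = 1$.

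Set $k := n^*n'h \in D$. Because $k = h\,\Delta(n^*n')$, we have $k(x) = \Delta(n^*n')(x)$, which is nonzero (resp.\ positive). Left-multiplying the identity $h\,n^*n' = n^*n'\,h$ by $n$ gives $(n h n^*)\,n' = n(n^*n'h) = n\,k$. The element $\tilde d := n h n^*$ lies in $D$ (since $n$ normalises $D$), with support contained in $\alpha_n(\supp h) \subseteq \alpha_n(U) = \alpha_{n'}(U) \subseteq \ran(n')$. Thus $\tilde d \in \overline{n'n'^*D}$, and the intertwining formula \eqref{thetaintertw} applied to $n'$ yields $\tilde d\,n' = n'\,\theta_{n'}(\tilde d)$. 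Combining, $n\,k = n'\,\theta_{n'}(\tilde d)$, so setting $d := k$ and $d' := \theta_{n'}(\tilde d)$ gives $nd = n'd'$. Using \eqref{thetaeval} and $\alpha_n(x) = \alpha_{n'}(x) =: y$, one checks $d'(x) = \tilde d(y) = h(x)\,(nn^*)(y) = (nn^*)(\alpha_n(x)) > 0$, while $d(x) = \Delta(n^*n')(x)$ has the desired sign/non-vanishing. A final multiplication by a bump in $C_c(X)$ equal to $1$ at $x$ puts $d,d'$ into $C_c(X)$ without affecting either $nd = n'd'$ or the values at $x$.

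The main obstacle is step three, the conversion of the ``naive'' relation $(nhn^*)\,n' = n\,k$ into the required form $n\,d = n'\,d'$: this requires both that $\tilde d$ satisfy the support condition needed to invoke the intertwining isomorphism $\theta_{n'}$, and that $\alpha_n$ and $\alpha_{n'}$ genuinely agree on a neighbourhood of $x$ (not just at $x$). Both of these are arranged by shrinking the support of $h$ inside the fixed-point set provided by Lemma~\ref{exists h}.
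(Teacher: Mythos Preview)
Your proof is correct and follows the same overall strategy as the paper: both directions hinge on Lemma~\ref{exists h}, and the easy direction is the same one-line $\Delta$-computation (the paper leaves it as ``follows immediately from the definitions''). The only real difference is in how you convert the identity $(nhn^*)n' = nk$ into the form $nd = n'd'$. You shrink $\supp h$ into the open set where $\alpha_n = \alpha_{n'}$ so that $\tilde d = nhn^*$ lands in $\overline{n'n'^*D}$ and the intertwining isomorphism $\theta_{n'}$ applies directly; the paper instead avoids any support manipulation by multiplying through on the right by $(n'h)^*(n'h)$, which algebraically forces $nn^*(n'h)(n'h)^*$ into $\overline{(n'h)(n'h)^*D}$ and then applies the intertwining for $n'h$. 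Your geometric route makes the role of the fixed-point neighbourhood from Lemma~\ref{exists h} more explicit, while the paper's purely algebraic route is slightly shorter; both yield explicit $d,d'$ with the required positivity at $x$.
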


\begin{proof} We prove $\approx_\Sigma$ and $\sim_\Sigma$ are the
same.  The proof for $\approx_G$ and $\sim_G$ is similar.

Suppose $(n,x)\approx_\Sigma (n',x')$.  Then $x=x'$
and $\Delta(n^*n')(x)>0$. Since $n, n'$ are homogeneous normalizers,
so is $n^*n'$.
Lemma~\ref{exists h}
implies there is an $h\in D$ such that $hn^*n'=n^*n'h\in D$ and
$h(x)>0$.  Now consider the equalities,
\[ n(n^*n'h)((n'h)^*n'h)=nn^*[((n'h)(n'h)^*)n'h]=
  n'h(nn^*\circ\alpha_{(n'h)^*})(n'h)^*(n'h).\]
Take $d=(n^*n'h)((n'h)^*n'h)$ and
$d'=h(nn^*\circ\alpha_{(n'h)^*})(n'h)^*(n'h)$, so that $nd=n'd'$.  Note
\begin{align*}d(x)&=n^*n'(x)h(x)((n'h)^*n'h)(x)>0\text{ and}\\
d'(x)&=h(x)(nn^*\circ\alpha_{(n'h)^*})(x)((n'h)^*(n'h))(x)>0.
\end{align*} Thus $(n,x)\sim_\Sigma (n',x)$.  The converse follows immediately from the definitions.
\end{proof}

\subsection{Viewing $\Sigma$ as linear functionals} Our next goal is to show that $\Sigma$ may be identified as a family
of linear functionals on $A$ and $G$ as a family of functions on $A$.
This highlights the role of the inclusion $(A,D)$ in producing
$\Sigma$ and $G$ and will allow us to easily define Hausdorff
topologies on $\Sigma$ and $G$.  In addition, for $a\in A$ we will
define $\hat a: \Sigma\rightarrow \bbC$ by $\hat a([n,x])=[n,x](a)$.
The main result of this section shows that the map $a\mapsto \hat a$
is an isomorphism $A\ni a\mapsto \hat a\in C^*_r(\Sigma;G)$ which in a
natural sense extends the the Gelfand transform.

We write $\dual{A}$ for the Banach space dual of $A$.  For
$f\in \dual{A}$, let $f^*\in \dual{A}$ be defined by
$A\ni a\mapsto \overline{f(a^*)}$ and let $|f|$ be the function on $A$
defined by $|f|(a)=|f(a)|$.  For a non-empty subset
$K\subseteq \dual{A}$, write $|K|:=\{|f|:f\in K\}$.  Equip $K$ with
the relative weak-$*$ topology and $|K|$ with the quotient topology
arising from the surjective map, $K\ni f\mapsto |f|$.  Then $K$ and $|K|$ are Hausdorff.

Put \[\fS:=\{x\circ \Delta: x\in X\},\] so $\fS$ consists of all
states of the form $A\ni a \mapsto \Delta(a) (x)$.  Then $\fS$ is a
family of state extensions of pure states on $D$ to all of $A$.
We make the following observations.
\begin{dremark}{Observations} \label{obsS}
  \begin{enumerate}
  \item With the relative weak-$*$ topology (i.e.\ the
$\sigma(\dual{A},A)$-topology) on $\fS$,  the restriction map,
$\fS\ni \psi\mapsto \psi|_D$ is a homeomorphism of $\fS$ onto $X$.
\item 
 Lemma~\ref{exists h} implies that
if $\psi\in\fS$, then for every $n\in N_h$,
\begin{equation}\label{compat}
  |\psi(n)|^2\in\{0,\psi(n^*n)\}.
\end{equation}
This condition is a variant of
the notion of \textit{compatible state} introduced in~\cite{Pitts17},
the difference being that~\eqref{compat} is required to hold only for
elements of $N_h$ rather than all of $N$ as in~\cite{Pitts17}.

The compatibility condition~\eqref{compat} implies
(using~\cite[Proposition~4.4(iii)]{Pitts17} and the Cauchy-Schwartz
inequality) that in the GNS representation $(\pi_\psi, \sH_\psi)$
associated to $\psi$, the set of vectors
$\mathcal V:=\{n+L_\psi:n\in N_h\}$ has the property that any two
vectors in $\mathcal V$ are either orthogonal or parallel; here
$L_\psi$ is the left kernel of $\psi$,
$L_\psi:=\{a\in A: \psi(a^*a)=0\}$.  Notice also that
$\spn \mathcal V$ is dense in $\sH_\psi$.

\end{enumerate}
\end{dremark}

For any $(n,x)\in \mathcal G$, define an element of $\dual{A}$ by
\begin{equation}\label{eidef}
  \psi_{(n,x)}(a):=\frac{\Delta(n^*a)(x)}{|n|(x)}.
\end{equation}
Simple calculations
show that $\norm{\psi_{(n,x)}}=1$ and for $d_1,d_2\in D$ and $a\in A$,
\[\psi_{(n,x)}(d_1ad_2)=d_1(\alpha_n(x))\psi_{(n,x)}(a) d_2(x).\]  In other words, in
  the language of~\cite[Section~2]{DonsigPitts}, $\psi_{(n,x)}$ is a norm-one
  eigenfunctional with source $s(\psi_{(n,x)})=x$ and range
  $r(\psi_{(n,x)})=\alpha_n(x)$.   Furthermore, observe that $(n,x)\in \sG
  \Leftrightarrow (n^*,\alpha_n(x))\in \sG$ and a calculation using
  Proposition~\ref{ndel} shows that for $(n,x)\in \sG$,
  \begin{equation} \label{finv}
    \psi_{(n,x)}^*=\psi_{(n^*,\alpha_n(x))}.
  \end{equation}
For later use, notice that for $d\in D$ with $d(x)>0$ and $z\in \T$, 
  \begin{equation}\label{fsubrel}
  \psi_{(nd,x)}= \psi_{(n,x)}\quad\text{and}\quad  \psi_{(zn,x)}=\overline{z} \psi_{(n,x)}.
  \end{equation}

  Let
  \begin{equation}\label{defE}
    \mathcal E:=\{\psi_{(n,x)}:(n,x)\in\mathcal G\}.
  \end{equation}
  Since a state $\psi$ on a $C^*$-algebra $B$ is
  uniquely determined by $|\psi|$, it also makes sense to define source and
  range maps on $|\mathcal E|$ by $s(|\psi_{(n,x)}|)=x$ and
  $r(|\psi_{(n,x)}|)=\alpha_n(x)$. 
  Then the source
  and range maps carry $\mathcal E$ and $|\mathcal E|$ onto $X$.

  Given $\psi\in \sE$, write $\psi=\psi_{(n,x)}\in\sE$,  and choose $m\in N_h$
  such that $\psi(m)\neq 0$.  Notice that for any $a\in A$, we have
  \begin{equation}\label{sre}
    \Delta(a)(x)=\frac{\psi(ma)}{\psi(m)}\dstext{and}
    \Delta(a)(m.x)=\frac{\psi(am)}{\psi(m)}.
  \end{equation}
  (Indeed, since $\psi(m)\neq 0$, Lemma~\ref{exists h} gives
  $n\sim_G m$, and a computation gives~\eqref{sre}.) 
  Setting \begin{equation}\label{srdef}
    \fs(\psi):=\frac{\psi(ma)}{\psi(m)} \dstext{and}
    \fr(\psi)=\frac{\psi(am)}{\psi(m)},
  \end{equation}
  then $\fs(\psi)=s(\psi)\circ\Delta$ and
  $\fr=r(\psi)\circ \Delta$.  Thus $\fs(\psi)$ and $\fr(\psi)$ are the (necessarily unique) elements of $\fS$ satisfying
  \[\fs(\psi)|_D=s(\psi)\dstext{and} \fr(\psi)|_D=r(\psi).\]  

Also, notice that $\fS\subseteq \sE$, for if $\psi =x\circ \Delta\in
\fS$, then $\psi=\psi_{(d,x)}$ for any $d\in D$ with $d(x)>0$.  Also, it
follows easily (using Lemma~\ref{exists h}) that
  \begin{equation}\label{sSsubsE}
    \fS=\{\psi_{(n,x)}: \Delta(n)(x)>0\}=\{\psi_{(d,x)}: d\in D \text{ and }
    d(x)>0\}.
  \end{equation}
  
  We list a few additional properties of $\mathcal E$ and $|\mathcal E|$.
  \begin{lemma}\label{eigenprop}
    The following statements hold.
    \begin{enumerate}
    \item The map $\psi_{(n,x)}\mapsto
      [n,x]_\Sigma$ is a well-defined bijection of $\mathcal E$ onto
      $\Sigma$.  
\item If $g\in \sE$ and $m\in N_h$ satisfies $g(m)>0$, then
  $g=\psi_{(m,s(g))}$.  
\item $\mathcal E\cup\{0\}$ is weak-$*$ compact; in
            particular $\mathcal E$ is locally compact.  Furthermore, $s, r$
            are continuous mappings of $\mathcal E$ onto $X$.
\item The  map $|\psi_{(n,x)}|\mapsto [n,x]_G$
          is a well-defined bijection of $|\mathcal E|$ onto $G$.
\item If $|\phi|\in |\sE|$ and $m\in N_h$ satisfies $|\phi|(m)\neq 0$, then
  $|\phi|=|\psi_{(m,s(|\phi|))}|$.    
          \item $|\mathcal E|$ is locally compact and $s, r$
            are continuous mappings of $|\mathcal E|$ onto $X$.
          \end{enumerate}
        \end{lemma}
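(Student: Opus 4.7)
For parts (1), (2), (4), (5) my plan is to use Proposition~\ref{R=D} to trade the defining relations $\sim_\Sigma, \sim_G$ for the cleaner $\approx_\Sigma, \approx_G$. For (1), well-definedness of $\psi_{(n,x)}\mapsto [n,x]_\Sigma$ follows from~\eqref{fsubrel}: if $(n,x)\sim_\Sigma(n',x')$ then $x=x'$ and $nd=n'd'$ for some $d,d'\in D$ positive at $x$, whence $\psi_{(n,x)}=\psi_{(nd,x)}=\psi_{(n'd',x')}=\psi_{(n',x')}$. For injectivity, if $\psi_{(n,x)}=\psi_{(n',x')}$ then the sources agree so $x=x'$, and evaluating at $n'$ gives $\Delta(n^*n')(x)/|n|(x)=\psi_{(n',x)}(n')=|n'|(x)>0$, so $(n,x)\approx_\Sigma(n',x')$. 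Surjectivity is by definition of $\mathcal{E}$. Part (2) is then a one-line computation: writing $g=\psi_{(n,x)}$ with $x=s(g)$, the hypothesis $g(m)>0$ unpacks to $\Delta(n^*m)(x)>0$, i.e.\ $(n,x)\approx_\Sigma(m,s(g))$. Parts (4) and (5) run parallel with $\approx_G$ in place of $\approx_\Sigma$, after first verifying that $s,r$ descend to $|\mathcal{E}|$: for $m\in N_h$ with $|\psi|(m^*m)\neq 0$, the formula $|\psi|(m^*md)=d(x)\,m^*m(x)$ for real $d\in D$ recovers the source from $|\psi|$ alone.

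For (3), Banach-Alaoglu gives weak-$*$ compactness of the closed unit ball of $\dual{A}$, in which $\mathcal{E}$ sits as norm-one functionals; my plan is to show $\mathcal{E}\cup\{0\}$ is weak-$*$ closed. Let $\psi_\lambda=\psi_{(n_\lambda,x_\lambda)}$ converge weak-$*$ to $\psi\neq 0$. Pick $m\in N_h$ with $\psi(m)\neq 0$; rescaling by $z\in\T$ via~\eqref{fsubrel} we may assume $\psi(m)>0$. The compatibility relation~\eqref{compat} passes to the limit (by a subnet argument on the dichotomy $|\psi_\lambda(n)|^2\in\{0,\psi_\lambda(n^*n)\}$), so $\psi$ itself is compatible. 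The map $d\mapsto\psi(md)/\psi(m)$ is a pointwise limit of the characters $d\mapsto d(x_\lambda)$, hence multiplicative and positive, and Lemma~\ref{nondeg}(4) applied to $m$ and an approximate unit of $D$ shows it is nonzero, so it equals $\mathrm{ev}_x$ for a unique $x\in X$. One then verifies $\psi=\psi_{(m,x)}$ by checking equality on the dense subspace spanned by $N_h$, using compatibility, the eigenfunctional property, and Proposition~\ref{ndel} to compare $\psi(n)$ with $\Delta(m^*n)(x)/|m|(x)$. Continuity of $s$ on $\mathcal{E}$ is then immediate from the local formula $s(\psi)(d)=\psi(md)/\psi(m)$ on the weak-$*$ open set $\{\psi:\psi(m)\neq 0\}$; continuity of $r$ is analogous.

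Part (6) follows by passing to the quotient. The action $(z,\psi)\mapsto\bar z\psi$ of $\T$ on $\mathcal{E}$ is by weak-$*$ homeomorphisms, so the quotient map $\mathcal{E}\twoheadrightarrow|\mathcal{E}|$ is open; hence $|\mathcal{E}|$ inherits local compactness from (3), and because $s$ and $r$ are $\T$-invariant they descend to continuous maps on $|\mathcal{E}|$. The main obstacle will be the step in (3) that identifies the weak-$*$ limit as some $\psi_{(m,x)}$: one must rule out that the source characters $x_\lambda$ escape to infinity in $X$ (which is where Lemma~\ref{nondeg}(4) is essential) and then confirm the eigenfunctional formula~\eqref{eidef} holds in the limit against an arbitrary homogeneous normalizer, which requires combining the compatibility dichotomy with the intertwining identity of Proposition~\ref{ndel}.
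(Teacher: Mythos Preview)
Your treatment of parts (1), (2), (4), (5) is essentially the paper's own argument: both directions of (1) hinge on Proposition~\ref{R=D} together with~\eqref{fsubrel}, and (2) is the one-line consequence you describe. For (6) the paper says nothing, and your quotient argument via the free $\bbT$-action is the natural one.

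Where you diverge from the paper is in part (3). The paper first establishes continuity of $s,r$ via the formula~\eqref{srdef}, and then, for a net $\psi_i\to\phi\neq 0$, invokes \cite[Proposition~2.3]{DonsigPitts} to conclude immediately that $\phi$ is an eigenfunctional (so $s(\phi)\in X$ exists); continuity of $s$ then gives $x_i\to x:=s(\phi)$. Your approach avoids this external reference by manufacturing $x$ directly from the formula $d\mapsto\psi(md)/\psi(m)$ and using Lemma~\ref{nondeg}(4) to rule out escape to infinity. That part is fine and self-contained.

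The gap is your final verification that $\psi=\psi_{(m,x)}$. The tools you name---compatibility of $\psi$, ``the eigenfunctional property,'' and Proposition~\ref{ndel}---do not obviously assemble into a computation of $\psi(n)$ for a general $n\in N_h$. Compatibility only tells you $|\psi(n)|^2\in\{0,\psi(n^*n)\}$, and you have no direct handle on $\psi(n^*n)$ (it need not equal $n^*n(x)$ until you already know $\psi$ is an eigenfunctional with source $x$); Proposition~\ref{ndel} concerns $\Delta$, not $\psi$. What actually closes the argument---and what the paper does---is a reparametrization: once $\psi_\lambda(m)\neq 0$, Proposition~\ref{R=D} gives $(n_\lambda,x_\lambda)\sim_G(m,x_\lambda)$, so $\psi_\lambda=\overline{z_\lambda}\,\psi_{(m,x_\lambda)}$ for some $z_\lambda\in\bbT$. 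Evaluating at $m$ yields $\overline{z_\lambda}\,|m|(x_\lambda)=\psi_\lambda(m)\to\psi(m)>0$, and since $x_\lambda\to x$ you get $z_\lambda\to 1$; then $\psi=\lim \psi_{(m,x_\lambda)}=\psi_{(m,x)}$ by weak-$*$ continuity of $y\mapsto\psi_{(m,y)}$ on $\dom(m)$. This step uses neither compatibility of the limit nor Proposition~\ref{ndel}, and it grafts cleanly onto your setup once you have $x_\lambda\to x$.
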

        \begin{proof}
We prove statements (1), (2) and (3), leaving the others to the reader.
To establish the first, it suffices to show $\psi_{(n_1,x_1)}=\psi_{(n_2,x_2)}$ if and only if $[n_1,
      x_1]_\Sigma=[n_2,x_2]_\Sigma$ and this is what we do.

          Suppose $\psi_{(n_1,x_1)}=\psi_{(n_2,x_2)}$.   Applying the source map gives
$x_1=x_2$; write $x:=x_1=x_2$.   Now
\[0<\psi_{(n_2,x)}(n_2)=\psi_{(n_1,x)}(n_2)=\frac{\Delta(n_1^*n_2)(x)}{|n_1|(x)},\]
which implies $\Delta(n_1^*n_2)(x)>0$.  Proposition~\ref{R=D} now gives
$[n_1,x_1]_\Sigma=[n_2,x_2]_\Sigma$.  Conversely, if
$[n_1,x_1]_\Sigma=[n_2,x_2]_\Sigma$, then $x_1=x_2=:x$ and there exists $d_1, d_2\in D$
with $d_1(x)>0$ and $d_2(x)>0$ such that $n_1d_1=n_2d_2$.  Then
\[\psi_{(n_1,x_1)}=\psi_{(n_1,x)}=\psi_{(n_1d_1,x)}=\psi_{(n_2d_2,x)}=\psi_{(n_2,x_2)}.\]
This gives statement (1).

For statement (2), write $\phi=\psi_{(n,x)}$ and apply Proposition~\ref{R=D} and part (1).

Turning now to statement (3),  $\fs(\psi_{(n,x)})=x\circ \Delta$
and $\fr(\psi_{(n,x)})=\alpha_n(x)\circ \Delta$, so the maps
$\fs, \fr:\sE\rightarrow \fS$ are surjective.  They are 
continuous by~\eqref{srdef}, so
$s, r: \sE\rightarrow X$ are also continuous surjections.

Next suppose that
$\psi_{(n_i, x_i)}$ is a net in $\mathcal E$ converging
weak-$*$ to $\phi\in\dual{A}$; write
$\psi_i:=\psi_{(n_i,x_i)}$.  If $\phi=0$, there is nothing to
do.  So suppose $\phi\neq 0$.  By~\cite[Proposition~2.3]{DonsigPitts},
$\phi$ is an eigenfunctional.
Thus if $x:=s(\phi)$, continuity of $s$ yields $x_i\rightarrow x$.

Since $\spn
N_h$ is dense in $A$, there exists $n\in N_h$ such that $\phi(n) > 0$.
Since $n(n^*n)^{1/k}\rightarrow n$ as $k\rightarrow \infty$, we have
$0<\phi(n)=\lim_k \phi(n(n^*n)^{1/k})=\phi(n)\lim_k (n^*n)^{1/k}(x)$.  Thus $n^*n(x)\neq 0$ and so
$(n,x)\in \mathcal G$.  Since $\psi_i\rightarrow \phi$,
$\psi_i(n)$ is eventually non-zero, so we may as well assume that
$\psi_i(n)\neq 0$ for every $\lambda$.  Proposition~\ref{R=D} implies
$(n_i,x_i)\sim_G(n,x_i)$.  Hence there exists
$z_i\in \bbT$ such that
$\psi_i=\psi_{(z_i n,x_i)}=\overline{z_i}\cdot
\psi_{(n,x_i)}$.   Therefore,
\[
0<\phi(n)=\lim
\overline{z_i}\frac{\Delta(n^*n)(x_i)}{|n|(x_i)}=\lim\overline{z_i}\,
  |n|(x_i).\]   As $|n|(x_i)\rightarrow |n|(x)$, we
conclude  $z_i\rightarrow 1$.  It follows that $\phi=\lim
\psi_i=\lim \overline{z_i}\cdot \psi_{(n,x_i)}=\psi_{(n,x)}$,
so $\phi\in \sE$.   Thus $\sE\cup\{0\}$ is a closed subset of the unit
ball of $\dual{A}$, and hence is compact.
 \end{proof}

\begin{dremark}{Notation}\label{gpoidftnals}
We use the bijections of Lemma~\ref{eigenprop} to identify $\Sigma$
with $\sE$ (respectively $G$ with $|\sE|$) and will use $\sE$ and $\Sigma$
interchangeably (resp.\ $G$ and $|\sE|$) depending upon what is
convenient for the context.    Thus for $a\in A$, we will often write
$[n,x]_\Sigma(a)$ and $[n,x]_G(a)$ instead of $\psi_{(n,x)}(a)$ and
$|\psi_{(n,x)}|(a)$.  Then $\Sigma$ and $G$ become Hausdorff
topological spaces of functions on $A$.   When convenient, we will also identify $\fS$
with $X$ via the restriction mapping from~\ref{obsS}(1).
\end{dremark}

\subsection{The twist associated to a $\Gamma$-Cartan pair} We are now prepared to place groupoid structures on $G$ and $\Sigma$.
This is done exactly as in~\cite[Definition~8.10 and
Theorem~8.12]{Pitt12};  for convenience, we provide sketches of the
proofs using the present notation.

\begin{lemma} $\Sigma$ and $G$ are Hausdorff topological groupoids
  under the following operations:
\begin{itemize}
\item Multiplication: $[m,\alpha_{n}(x)][n,x]=[mn,x]$;
\item Inversion: $[n,x]\inv=[n^*,\alpha_n(x)]$.
\end{itemize} The map $x\mapsto [d,x]$ for $d\in D$ with $d(x)>0$
identifies $X$ with the unit space of $\Sigma$ and $G$.  Furthermore,
under this identification $r([n,x])=\alpha_n(x)$ and $s([n,x])=x$.
\end{lemma}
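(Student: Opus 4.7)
The plan is to verify, in sequence: (i) well-definedness of the proposed operations on equivalence classes, (ii) the groupoid axioms together with the unit space identification, and (iii) continuity of the operations. Hausdorffness is already handled in Lemma~\ref{eigenprop} via the identifications $\Sigma\cong\sE$ and $G\cong|\sE|$, both carrying the relative weak-$*$ topology from $\dual{A}$.

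For (i), first note $(mn,x)\in\sG$: by Proposition~\ref{prop def alpha}, $(mn)^*(mn)(x)=(m^*m)(\alpha_n(x))(n^*n)(x)>0$. Independence of the right representative is immediate: if $nd_1=n'd_2$ with $d_i(x)>0$ witnesses $[n,x]_\Sigma=[n',x]_\Sigma$, then $(mn)d_1=(mn')d_2$ witnesses $[mn,x]_\Sigma=[mn',x]_\Sigma$. Independence of the left representative requires one extra step. Given $md_1=m'd_2$ with $d_i(\alpha_n(x))>0$, select a positive $h\in\ol{nn^*D}$ with $h(\alpha_n(x))>0$ (possible since $\alpha_n(x)\in\ran(n)$, the Gelfand space of $\ol{nn^*D}$). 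Then $d_ih\in\ol{nn^*D}$, and~\eqref{thetaintertw} together with~\eqref{thetaeval} gives $(mn)\theta_n(d_1h)=(m'n)\theta_n(d_2h)$ with $\theta_n(d_ih)(x)=(d_ih)(\alpha_n(x))>0$, witnessing $[mn,x]_\Sigma=[m'n,x]_\Sigma$. For inversion, independence is immediate from~\eqref{finv}: the involution $\psi\mapsto\psi^*$ on $\sE$ is intrinsically defined, and corresponds under $\Sigma\cong\sE$ to the proposed formula $[n,x]_\Sigma\mapsto[n^*,\alpha_n(x)]_\Sigma$. Analogous arguments (replacing positivity by nonvanishing) handle $\sim_G$.

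For (ii), associativity of the partial product is inherited from associativity in $A$. The unit at $x$ is $[d,x]_\Sigma$ for any $d\in D$ with $d(x)>0$; independence of the choice of $d$ is immediate from commutativity of $D$ since $d_1\cdot d_2=d_2\cdot d_1$ and $(d_1d_2)(x)>0$. The unit axiom $[d,\alpha_n(x)]_\Sigma[n,x]_\Sigma=[n,x]_\Sigma$ follows from the same $\theta_n$-manipulation as in (i). The inverse axiom $[n,x]_\Sigma\inv[n,x]_\Sigma=[n^*n,x]_\Sigma$ gives the unit at $x$ because $n^*n\in D$ by Lemma~\ref{nondeg}(3) with $(n^*n)(x)>0$; similarly $[n,x]_\Sigma[n,x]_\Sigma\inv$ is the unit at $\alpha_n(x)$. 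This confirms $s([n,x]_\Sigma)=x$ and $r([n,x]_\Sigma)=\alpha_n(x)$. The groupoid structure on $G$ is inherited via the surjection $\Sigma\twoheadrightarrow G$.

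For (iii), inversion on $\sE$ is the weak-$*$ continuous map $\psi\mapsto\psi^*$. To prove multiplication is continuous, let $(\psi_i,\phi_i)\to(\psi,\phi)$ be a convergent net of composable pairs in $\sE\times\sE$. Write $\psi=\psi_{(m,y)}$ and $\phi=\psi_{(n,x)}$ with $y=\alpha_n(x)$; note $\psi(m),\phi(n)>0$. By the argument from the proof of Lemma~\ref{eigenprop}(3), eventually there exist $z_i,w_i\in\bbT$ with $z_i,w_i\to 1$ such that $\psi_i=\psi_{(z_im,y_i)}$ and $\phi_i=\psi_{(w_in,x_i)}$, with $y_i\to y$ and $x_i\to x$; composability forces $y_i=\alpha_n(x_i)$. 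Then by the multiplication formula and~\eqref{fsubrel}, $\psi_i\cdot\phi_i=\psi_{(z_iw_i\,mn,\,x_i)}=\overline{z_iw_i}\,\psi_{(mn,x_i)}$, which converges to $\psi_{(mn,x)}=\psi\cdot\phi$ by continuity in $x$ of the formula $a\mapsto\Delta((mn)^*a)(x)/|mn|(x)$ on the open set $\dom(mn)\ni x$. The main obstacle is this continuity argument, which depends on the local parametrization of convergent nets of eigenfunctionals by a fixed $N_h$-representative supplied by Lemma~\ref{eigenprop}(3); the same analysis descends to $G\cong|\sE|$.
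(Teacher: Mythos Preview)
Your proposal is correct and follows essentially the same route as the paper: well-definedness via the $\theta_n$-intertwining relation~\eqref{thetaintertw} and~\eqref{fsubrel}, the groupoid axioms from associativity in $A$ and Lemma~\ref{nondeg}(3), and continuity of multiplication via the local parametrization of nets in $\sE$ by a fixed $N_h$-representative exactly as in Lemma~\ref{eigenprop}(3). The only cosmetic difference is that the paper verifies the inverse/unit axioms by invoking Lemma~\ref{eigenprop}(2) (checking $[mnn^*,\alpha_n(y)](m)>0$, etc.), whereas you compute $[n,x]^{-1}[n,x]=[n^*n,x]$ directly; both are straightforward.
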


\begin{proof}

 We sketch the proof for $\Sigma$.  The proof for $G$ is
 left to the reader (details may be found in~\cite{Pitt12}).
That inversion is well-defined and continuous follows from
\eqref{eidef},~\eqref{finv}, and Lemma~\ref{eigenprop}.  Also, it is
clear that inversion is involutive.

Next we show multiplication is well-defined.  Suppose $[m_1,y]_\Sigma=[m_2,y]_\Sigma$ and  $[n_1,x]_\Sigma=[n_2,x]_\Sigma$. Using the bijection in  Lemma~\ref{eigenprop} we can identify $\psi$ and $\phi$ with  $[m_1,y]_\Sigma=[m_2,y]_\Sigma$  and $[n_1,x]_\Sigma=[n_2,x]_\Sigma$ respectively.   We have $y=\alpha_{n_i}(x)$.  By the definition of $\sim_\Sigma$ we can assume $m_2=m_1 d$ and $n_2=n_1 d'$ where $d(y)>0$ and $d'(x)>0$.  So to show that multiplication is well defined it suffices to show that $\psi_{(m_1n_1,x)}=\psi_{(m_1dn_1d',x)}$.  But this follows since $m_1dn_1d'=m_1n_1\theta_{n_1}(d)d'$ and we know from equation~\eqref{fsubrel} that $\psi_{(\nu b, x)}=\psi_{(\nu,x)}$ for all $\nu\in N_h$, $x\in \dom(\nu)$ and $b\in D$ with $b(x)>0$.

Multiplication is associative since
multiplication in the $C^*$-algebra is.

Suppose $[m,x], [n,y]\in \Sigma$ are such that
  the composition 
  $[m,x]  [n,y]$ is defined.  Then $x=\alpha_n(y)$.  We must show that 
\[[m,x] [n,y] [n^*,\alpha_n(y)]=[m,x]\dstext{and}
  [m^*,\alpha_m(y)][m,x][n,y]=[n,y].\]
But these equalities follow from Lemma~\ref{eigenprop}(2) because
\[[mnn^*,\alpha_n(y)](m)>0\dstext{and} [m^*mn,y](n)>0.\]  
This completes the proof that $\Sigma$ is a
groupoid when equipped with the indicated operations.

Since $\unit{\Sigma}=\{[m,x]_\Sigma^{-1} [m,x]_\Sigma: (m,x)\in\mathcal G\}$ we obtain
\[\unit{\Sigma}=\{[d,x]_\Sigma\in \Sigma: d\in D \text{ and } d(x)>0\}=\fS.\]
It follows that  the map $X\ni x \mapsto [d,x]_\Sigma$ where $d\in D$ is chosen so that
$d(x)>0$, is a bijection of
$X$ onto $\unit{\Sigma}$.   Similarly, the map $X\ni x\mapsto [d,x]_G$
where $d\in D$ satisfies $d(x)>0$ (or merely satisfies $d(x)\neq 0)$) is a bijection of $X$ onto
$\unit{G}$.

For $(n,x)\in \mathcal G$,
$r([n,x])=[nn^*,\alpha_n(x)]$ and $s([n,x])=[n^*n,x])$.  This gives
the desired identification of the range and source maps.

We have already observed that inversion is continuous and we now verify that
multiplication is continuous.    Let $\sE^{(2)}$ be the set of
composable pairs, that is, the collection 
$(\psi,\phi)\in \sE\times \sE$ with $s(\psi)=r(\phi)$.
Suppose
$(\phi_i)_{i\in I}$ and
$(\psi_i)_{i\in I}$ are nets in $\sE$
converging to $\phi, \psi\in \sE$ respectively, and such that
$(\phi_i,\psi_i)\in \sE^{(2)}$ for all $\lambda$. Since
$s$ and $r$ are continuous, we find that
 $s(\phi)=\lim_i
 s(\phi_i)=\lim_i r(\psi_i)=r(\psi)$, so
 $(\phi,\psi)\in\sE^{(2)}.$  Let
$n,m\in N_h$ be such that $\phi(n)>0$ and $\psi(m)>0$.  There exists
 $i_0$, so that $i\geq i_0$ implies 
 $\phi_i(n)$ and $\psi_i(m)$ are non-zero.  For each
 $i\geq i_0$, there exist scalars
 $\lambda_i,\lambda'_i\in\bbT$ such that
 $\phi_i=\lambda_i[n,s(\phi_i)]$ and
 $\psi_i=\lambda'_i[m,s(\psi_i)]$.  Since
 $$\lim_i\phi_i(n)=\phi(n)=\lim_i
 [n,s(\phi_i)](n)
\dstext{and} 
 \lim_i\psi_i(n)=\psi(n)=\lim_i [n,s(\psi_i)](n),$$ we
 conclude that $\lim \lambda_i=1=\lim \lambda'_i$.  So for any $a\in
 A$,
\begin{align*}
(\phi\psi)(a)&=\frac{s(\psi)((nm)^*a)}{(s(\psi)((nm)^*(nm)))^{1/2}}=
\lim_i
\frac{s(\psi_i)((nm)^*a)}{(s(\psi_i)((nm)^*(nm)))^{1/2}}
=\lim_i [n,s(\phi_i)][m,s(\psi_i)]\\
&=\lim_i(\phi_i\psi_i)(a),
\end{align*} giving  continuity of multiplication.
\end{proof}

Define $q\colon\Sigma\to G$ and $\iota: \bbT\times \unit{G}\to \Sigma$ by
\[q([n,x]_{\Sigma}):=[n,x]_G\quad\text{and}\quad \iota(\lambda,
  [d,x]_G)=[\lambda |d|,x]_\Sigma.\]  Then $q$ and $\iota$ are
continuous groupoid homomorphisms with $q$ surjective and $\iota$ injective.
 Moreover,
\[ q^{-1}(\unit{G})=\{[d,x]_\Sigma \colon d\in D \text{ and }d(x)\neq
  0\}=\iota(\bbT\times\unit{G}).\]  Furthermore, for $(n,x)\in
\mathcal G$, and $\lambda\in \bbT$,
\[\iota(\lambda, [nn^*, \alpha_n(x)]_G)\, [n,x]_\Sigma= [\lambda n,x]_\Sigma=
  [n,x]_\Sigma\, \iota(\lambda,[n^*n,x]_G).\] We thus have a central
extension of groupoids,
\[\bbT\times\unit{G}\overset{\iota}\hookrightarrow
  \Sigma\overset{q}\rightarrow G.\]  Also, for $\lambda\in
\bbT$ and $(n,x)\in \mathcal G$, 
\begin{equation}\label{Tact1}
  \lambda \cdot [n,x]_\Sigma=[\lambda n,x]_\Sigma.
\end{equation}

As $\unit{G}$ may be identified with $X$, we usually identify
$\iota(\bbT\times \unit{G})$ with $\T\times X$ by
\begin{equation}\label{Dsigmaid} [d,x]_\Sigma\mapsto
\left(\frac{d(x)}{|d|(x)},x\right).
\end{equation} Under this identification,  the  extension  of
groupoids above becomes
\[\T\times X\hookrightarrow \Sigma\stackrel{q}{\to} G.\]

\begin{remark}\label{2cdot}
 We have already seen an action of $\bbT$ on $\Sigma$: 
$\lambda\cdot [n,x]_\Sigma=[\lambda n,x]_\Sigma$.  When elements of
$\Sigma$ are identified with their corresponding elements of $\sE$ via
the map in Lemma~\ref{eigenprop}, there is another action of $\bbT$ on
$\Sigma$, namely scalar multiplication of linear functionals.  These
actions differ: if scalar multiplication of linear functionals is
denoted by juxtaposition, then
\[\overline{\lambda} [n,x]_{\Sigma}=\lambda\cdot [n,x]_\Sigma.\]
\end{remark}

For $n\in N_h$, let
\[Z(n):=\{[n,x]_G: x\in \dom(n)\}.\]
\begin{lemma}\label{znb}
  For each $n\in N_h$, $Z(n)$ is an open bisection for $G$ and
  $\{Z(n): n\in N_h\}$ is a base for the topology on $G$.  Moreover,
  $q^{-1}(Z(n))$ is homeomorphic to $\bbT\times Z(n)$.  In
  particular, $G$ is an \'etale groupoid and the bundle
  $\Sigma\rightarrow G$ is locally trivial.
\end{lemma}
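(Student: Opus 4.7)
The plan is to exploit the identification of $G$ with $|\mathcal E|$ (with the quotient of the weak-$*$ topology) so that $Z(n)$ becomes expressible in terms of evaluation at $n$. Specifically, by Proposition~\ref{R=D}, for $(m,y)\in\mathcal{G}$ we have $[m,y]_G = [n,y]_G$ iff $\Delta(m^*n)(y)\neq 0$, which by \eqref{eidef} is equivalent to $|\psi_{(m,y)}|(n)\neq 0$. Thus $Z(n)=\{\phi\in G:\phi(n)\neq 0\}$, and since the map $|\mathcal E|\ni|\psi|\mapsto|\psi(n)|$ is the quotient of a weak-$*$ continuous map, $Z(n)$ is open. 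The analogous computation identifies $q^{-1}(Z(n))=\{\sigma\in\Sigma:\sigma(n)\neq 0\}$.

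Next I would verify that $s|_{Z(n)}\colon Z(n)\to\dom(n)$ is a homeomorphism onto the open set $\dom(n)\subseteq X=\unit G$. Injectivity is automatic since the equivalence relation $\sim_G$ preserves the base point, and continuity of $s$ is Lemma~\ref{eigenprop}(6). For continuity of the inverse $x\mapsto[n,x]_G$, note that for each $a\in A$ the function $x\mapsto \Delta(n^*a)(x)/|n|(x)$ is continuous on $\dom(n)$, so $\dom(n)\ni x\mapsto\psi_{(n,x)}\in\mathcal E$ is weak-$*$ continuous; composing with the quotient $\mathcal E\to G$ gives continuity into $Z(n)$. The same argument with $n^*$ in place of $n$ handles $r|_{Z(n)}$, establishing the bisection property.

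To see that $\{Z(n):n\in N_h\}$ is a base, let $\phi=[n,x]_G$ lie in an open set $U\subseteq G$. Using the homeomorphism $s|_{Z(n)}$, choose an open neighborhood $V$ of $x$ in $\dom(n)$ with $(s|_{Z(n)})^{-1}(V)\subseteq U\cap Z(n)$, and by local compactness of $X$ pick $d\in D$ with $d(x)>0$ and $\supp(d)\subseteq V$. Since $n^*n\in D$ we have $(nd)^*(nd)=|d|^2\,n^*n$, so $\dom(nd)=\supp'(d)\cap\dom(n)\subseteq V$. For $y$ in this set, $\Delta((nd)^*n)(y)=\overline{d(y)}(n^*n)(y)\neq 0$, hence $[nd,y]_G=[n,y]_G$ by Proposition~\ref{R=D}. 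Consequently $Z(nd)=\{[n,y]_G:y\in\supp'(d)\cap\dom(n)\}\subseteq(s|_{Z(n)})^{-1}(V)\subseteq U$, and $\phi\in Z(nd)$.

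Finally, I would exhibit the local trivialization $\Psi_n\colon\mathbb T\times Z(n)\to q^{-1}(Z(n))$ by $\Psi_n(\lambda,[n,y]_G)=[\lambda n,y]_\Sigma$. This is continuous because $(\lambda,y)\mapsto\psi_{(\lambda n,y)}=\overline{\lambda}\,\psi_{(n,y)}$ is weak-$*$ continuous by the same formula as before, and bijective since by \eqref{fsubrel} every element of $q^{-1}(Z(n))$ is uniquely of the form $[\lambda n,y]_\Sigma$. The inverse is $\sigma\mapsto\bigl(\,\overline{\sigma(n)/|n|(s(\sigma))}\,,\,q(\sigma)\bigr)$, which is well-defined because $\sigma(n)\neq 0$ on $q^{-1}(Z(n))$ and continuous because evaluation at $n$ is weak-$*$ continuous, $s$ is continuous, and $|n|$ is nonvanishing on $\dom(n)$. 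Since $\{Z(n)\}$ is a base of bisections with open source and range images in $\unit G$, this makes $G$ \'etale; and the maps $\Psi_n$ provide the required local triviality for the bundle $\Sigma\to G$. The main bookkeeping obstacle is the basis argument of paragraph three, where one must combine the cutoff $d$ with the Weyl semigroup identity $(nd)^*(nd)=|d|^2n^*n$ and Proposition~\ref{R=D} to identify $Z(nd)$ explicitly with the intended neighborhood.
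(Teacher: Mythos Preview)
Your proposal is correct and follows essentially the same approach as the paper's proof: you identify $Z(n)$ via the non-vanishing locus of evaluation at $n$ (the paper does this at the level of $q^{-1}(Z(n))$ in $\Sigma$ and passes to the quotient), prove the bisection property via weak-$*$ continuity of $x\mapsto\psi_{(n,x)}$, establish the base by cutting down with a $d\in D$ supported inside $s(U\cap Z(n))$, and trivialize over $Z(n)$ via $(\lambda,[n,y]_G)\mapsto[\lambda n,y]_\Sigma$. Your version is slightly more explicit in places (e.g., the formula for $\Psi_n^{-1}$), but the ideas and the sequence of steps match the paper's argument closely.
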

\begin{proof}
The relevant definitions and an
application of Lemma~\ref{R=D} yield
\[q^{-1}(Z(n))=\{[m,y]_\Sigma\in \Sigma: [m,y](n) \neq 0\},\] which is
an open subset of $\Sigma$.  Thus $Z(n)$ is an open subset of $G$.  We
claim $r|_{Z(n)}$ and $s|_{Z(n)}$ are homeomorphisms of $Z(n)$ onto
$\ran(n)$ and $\dom(n)$ respectively. As $r$ is the
  composition of the source map with the inversion map, it suffices to
  show this for $s$ only.  First note that
  $s|_{Z(n)}: Z(n)\to \dom(n)$ is a bijection by definition.  By
  Lemma~\ref{eigenprop}(6), $s,r: G\to X$ are continuous.  Next we
  show $(s|_{Z(n)})^{-1}$ is a continuous function from $\dom n$ to
  $Z(n)$.  If $x_i\in \dom n$ is a net and
  $x_i\rightarrow x\in \dom n$, then
  $\psi_{(n,x_i)}\rightarrow \psi_{(n,x)}$ (by definition of the
  weak-* topology), so $[n,x_i]_G\rightarrow [n,x]_G$ by
  Lemma~\ref{eigenprop}(4).  Thus the claim holds, and $Z(n)$ is
  therefore an open bisection.

Let $U\subseteq G$ be open and choose $[n,x]_G\in U$.  Then
$V:=U\cap Z(n)$ is an open bisection, so $s(V)$ is an open subset of
$X$ containing $x$.  Let $d\in D$ be such that $\supp d\subseteq s(V)$
and $d(x)=1$.  Since $\dom(nd)=\dom(n)\cap \supp'(d)\subseteq s(V)$,
we find
\[[n,x]_G\in Z(nd)=s^{-1}(\dom(nd))\subseteq V\subseteq U.\]   Thus, $\{Z(n): n\in
N_h\}$ is a base for the topology on $G$.  As $\{Z(n): n\in N_h\}$
covers $G$, $G$ is \'etale.

Consider the  map $\tau: \bbT\times \dom (n)\rightarrow q^{-1}(Z(n))$
defined by
$(z,x)\mapsto [zn,x]_\Sigma$.  This map is a  homeomorphism, and as
$s|_{Z(n)}:Z(n)\rightarrow \dom(n)$ is a homeomorphism, we see 
$\Sigma\rightarrow G$ is locally trivial.     
\end{proof}

The following summarizes our discussion so far.

\begin{proposition} \label{G top1}  $\Sigma$ and $G$ are
  locally compact Hausdorff topological groupoids, $G$ is \'etale, and
  $\bbT\times X\hookrightarrow \Sigma\overset{q}\twoheadrightarrow G$
  is a twist.
\end{proposition}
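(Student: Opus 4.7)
The plan is to assemble the proposition from the pieces already in place. Most of the substantive work has been done in the preceding lemmas; what remains is to check the clauses of Definition~\ref{twistdef} one at a time and to verify the two topological hypotheses (Hausdorff and locally compact).

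First I would record the topological properties. Viewing $\Sigma$ as $\sE\subseteq \dual{A}$ with the relative weak-$*$ topology makes it Hausdorff, and then $G=|\sE|$ inherits the Hausdorff property as a quotient that separates $\sE$ into parallel/orthogonal classes (as recorded in Notation~\ref{gpoidftnals}). Local compactness of $\Sigma$ is immediate from Lemma~\ref{eigenprop}(3), which shows $\sE\cup\{0\}$ is weak-$*$ compact; since the map $\psi\mapsto |\psi|$ is continuous and open onto $|\sE|$, local compactness of $G$ follows from Lemma~\ref{eigenprop}(6). The groupoid axioms and the continuity of multiplication and inversion for both $\Sigma$ and $G$ were established in the preceding unnamed lemma.

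Next I would check the five conditions of Definition~\ref{twistdef} for the sequence
\[
\bbT\times X \overset{\iota}{\hookrightarrow} \Sigma \overset{q}{\twoheadrightarrow} G.
\]
Condition (1), that $\iota$ and $q$ are continuous groupoid homomorphisms with $\iota$ injective and $q$ surjective, was already noted from the definitions $q([n,x]_\Sigma)=[n,x]_G$ and $\iota(\lambda,[d,x]_G)=[\lambda|d|,x]_\Sigma$. For condition (2), I would use the identification of $\unit{\Sigma}$ with $\fS$ together with the homeomorphism $\fS\cong X$ of Observation~\ref{obsS}(1), and observe that both $\iota|_{\{1\}\times\unit G}$ and $q|_{\unit\Sigma}$ reduce to this identification. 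Condition (3), $q^{-1}(\unit G)=\iota(\bbT\times\unit G)$, was established just before the identification~\eqref{Dsigmaid}. Condition (4), centrality, is exactly the identity
\[
\iota(\lambda,[nn^*,\alpha_n(x)]_G)\,[n,x]_\Sigma=[\lambda n,x]_\Sigma=[n,x]_\Sigma\,\iota(\lambda,[n^*n,x]_G)
\]
displayed immediately before the statement of the proposition. Finally, condition (5), local triviality together with $G$ being étale, is precisely the content of Lemma~\ref{znb}: the sets $\{Z(n):n\in N_h\}$ form a base of open bisections, and the map $\tau(z,x)=[zn,x]_\Sigma$ trivializes $q^{-1}(Z(n))$ as $\bbT\times Z(n)$.

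I do not expect a genuinely hard step: essentially everything has been proved in Lemmas~\ref{eigenprop} and~\ref{znb} and in the preceding lemma verifying the groupoid operations. The one place requiring a small argument is condition (2), where I need to check that the bijection $x\mapsto [d,x]_\Sigma$ (for any $d\in D$ with $d(x)>0$) from $X$ onto $\unit\Sigma$ is actually a homeomorphism. Continuity in one direction follows from the formula~\eqref{eidef} defining $\psi_{(d,x)}$, and continuity of the inverse follows because $s:\sE\to X$ is continuous by Lemma~\ref{eigenprop}(3) and restricts to this inverse on $\unit\Sigma=\fS$. The corresponding statement for $q|_{\unit\Sigma}$ then follows from the definition of $\iota$ together with the identification~\eqref{Dsigmaid}, completing the proof.
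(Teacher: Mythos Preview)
Your proposal is correct and takes essentially the same approach as the paper: the proposition is stated there without proof, prefaced only by ``The following summarizes our discussion so far,'' and your write-up simply makes explicit which preceding results (Lemma~\ref{eigenprop}, the unnamed lemma on groupoid operations, the displayed identities before the proposition, and Lemma~\ref{znb}) supply each clause of Definition~\ref{twistdef} and the topological hypotheses. The only addition you make is the small verification for condition~(2) that $x\mapsto [d,x]_\Sigma$ is a homeomorphism, which is correct and in the spirit of the paper's assembled argument.
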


Define a map $\gr: N_h\to \Gamma$ by by taking $n\in A_t$ to $t$.  This
induces maps $c_\Sigma: \Sigma\to \Gamma$ and $c_G: G\to \Gamma$
given by
\begin{equation}\label{grdef}
  c_\Sigma([n,x]_\Sigma)=\gr(n)\dstext{and} c_G([n,x]_G)=\gr(n).
\end{equation}
Notice that the 
definition of the topologies and the groupoid multiplications imply
that  $c_\Sigma$ and $c_G$ are continuous homomorphisms.  We therefore
have produced the graded twist, 
\begin{equation*} \xymatrix{ \T\times \go\ar[r]& \Sigma
\ar[dr]_{c_{_\Sigma}} \ar[r] & G \ar[d]^{c_{_G}} \\ & &
\Gamma. &}
\end{equation*}

\subsection{Every $\Gamma$-Cartan pair is a twisted groupoid $C^*$-algebra} For $a\in A$, define a function $\hat a:\Sigma\rightarrow \bbC$ by
\[ [n,x]_\Sigma\mapsto \frac{\Delta(n^*a)(x)}{(n^*n)^{1/2}(x)},
  \dstext{that is,} \hat a([n,x]_\Sigma)=\psi_{(n,x)}\, (a).\] By
construction, $\hat a$ is a continuous function on $\Sigma$, and for
$z\in \bbT$, \[\hat{a}(z\cdot [n,x]_\Sigma)=\overline{z} \hat
  a([n,x]_\Sigma),\] so $\hat a$ may be regarded as a continuous
section of the line bundle over the twist $\Sigma\rightarrow G$.  Thus
we may regard the  open support of $\hat a$ as a subset of $G$, as in Remark~\ref{supp covariant}.

\begin{lemma}\label{supp n} Suppose $n\in N_h$, then the open support
of $\hat{n}$, $\supp'(\hat n)$, is the set $Z(n)$ 
\end{lemma}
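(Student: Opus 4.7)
The plan is to unpack both sides of the claimed equality in terms of the formula $\hat n([n',x]_\Sigma) = \psi_{(n',x)}(n) = \Delta(n'^*n)(x)/|n'|(x)$ and reduce everything to the characterization of $\sim_G$ furnished by Proposition~\ref{R=D}. By Remark~\ref{supp covariant}, an element of $G$ lies in $\supp'(\hat n)$ iff some (equivalently every) representative $[n',x]_\Sigma$ satisfies $\hat n([n',x]_\Sigma)\neq 0$, which by the formula above is the same as $\Delta(n'^*n)(x)\neq 0$. On the other hand, $[n',x]_G\in Z(n)$ means $x\in\dom(n)$ and $[n',x]_G=[n,x]_G$, since the source of $[n',x]_G$ is $x$.

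First I would dispatch the inclusion $Z(n)\subseteq\supp'(\hat n)$: if $[n',x]_G=[n,x]_G$ with $x\in\dom(n)$, then $(n',x)\approx_G(n,x)$ by Proposition~\ref{R=D}, so $\Delta(n'^*n)(x)\neq 0$, giving $[n',x]_G\in\supp'(\hat n)$.

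For the reverse inclusion $\supp'(\hat n)\subseteq Z(n)$, the one subtlety is that applying Proposition~\ref{R=D} requires $(n,x)\in\mathcal G$, i.e.\ $x\in\dom(n)$, and this needs to be extracted from the hypothesis $\Delta(n'^*n)(x)\neq 0$. I would proceed as follows. Since $\Phi_t$ kills $A_s$ for $s\neq t$ (Lemma~\ref{At prop}\eqref{it4}), the non-vanishing of $\Delta(n'^*n)(x)=E(\Phi_0(n'^*n))(x)$ forces $n'^*n\in A_0$; moreover $n'^*n$ is easily checked to be a normalizer of $D$, hence lies in $N_h$. Lemma~\ref{exists h} applied to $n'^*n$ then gives $x\in\dom(n'^*n)$, that is, $(n^*n'n'^*n)(x)\neq 0$. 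Since $n'n'^*\in D$ is positive and $e\geq 0$ in $D$ implies $n^*en\leq\|e\|\,n^*n$ pointwise, we conclude $(n^*n)(x)\neq 0$, so $(n,x)\in\mathcal G$. Proposition~\ref{R=D} now yields $[n',x]_G=[n,x]_G\in Z(n)$.

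The main obstacle is exactly this verification that $x\in\dom(n)$ in the $\supset$ direction; once $(n,x)$ is known to be in $\mathcal G$, Proposition~\ref{R=D} immediately identifies $\supp'(\hat n)$ with the $\sim_G$-class of $(n,x)$ intersected with the appropriate slice, which is $Z(n)$. Everything else is essentially unpacking definitions.
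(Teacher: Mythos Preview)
Your argument is correct and follows the same route as the paper: reduce the question to Proposition~\ref{R=D} by reading off $\Delta(n'^*n)(x)\neq 0$ from the formula for $\hat n$. The paper's own proof is two lines and simply asserts that $\Delta(m^*n)(y)\neq 0$ lets one invoke Proposition~\ref{R=D}; you are more careful in first checking that $(n,x)\in\mathcal G$, a hypothesis the paper silently assumes. Your verification via Lemma~\ref{exists h} and the inequality $n^*(n'n'^*)n\leq\|n'n'^*\|\,n^*n$ is fine, but note that it can be shortened: applying the Cauchy--Schwarz inequality to the state $\rho_x=x\circ\Delta$ gives $|\Delta(n'^*n)(x)|^2\leq (n'^*n')(x)\,(n^*n)(x)$ directly, so $(n^*n)(x)\neq 0$ follows immediately.
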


\begin{proof} Consider $\hat{n}$ for $n\in N_h$.  Then

\[\hat{n}[m,y]=\frac{\Delta(m^*n)(y)}{(m^*m)^{1/2}(y)}.\] This is zero
unless $\Delta(m^*n)\neq 0$.  Now Proposition~\ref{R=D} gives
$[m,y]_G=[n,y]_G$, that is $[m,y]_G\in Z(n)$.
\end{proof}

\begin{lemma} \label{linear}The map $\Psi:A\to C(\Sigma;G)$ given by
  $a\mapsto \hat a$ is linear
and injective. 
\end{lemma}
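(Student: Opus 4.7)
The plan is straightforward because linearity of $\Psi$ is immediate from the formula $\hat{a}([n,x]_\Sigma) = \Delta(n^*a)(x)/(n^*n)^{1/2}(x)$ and the linearity of $\Delta$, so the substance lies entirely in proving injectivity. Assume $\hat{a} \equiv 0$. Unpacking the definition, this says precisely that $\Delta(n^*a)(x) = 0$ for every $n \in N_h$ and every $x \in \dom(n)$; I will deduce $a = 0$ from this.

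The key step is to upgrade pointwise vanishing on $\dom(n)$ to the equality $\Delta(n^*a) = 0$ in $D$ for every $n \in N_h$. Setting $b := n^*n \in D$, the standard $C^*$-estimate
\[
\|n - nb^{1/k}\|^2 = \|(1-b^{1/k})n^*n(1-b^{1/k})\| = \|b(1-b^{1/k})^2\| \to 0
\]
gives $n = \lim_k n b^{1/k}$, so taking adjoints and applying the contractive map $\Delta$ yields $\Delta(n^*a) = \lim_k b^{1/k}\Delta(n^*a)$ in norm. Hence $\Delta(n^*a)$ lies in the hereditary ideal $\overline{bD} = C_0(\dom(n))$ of $D$; in particular it vanishes on $X \setminus \dom(n)$. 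Combined with the hypothesis that it vanishes on $\dom(n)$ as well, we conclude $\Delta(n^*a) = 0$.

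To finish, I invoke Lemma~\ref{nondeg}(1) that $\spn N_h$ is norm-dense in $A$, together with the continuity and linearity of $\Delta$, to extend to $\Delta(b^*a) = 0$ for every $b \in A$. Taking $b = a$ yields $\Delta(a^*a) = 0$, and faithfulness of $\Delta = E \circ \Phi_0$ (Lemma~\ref{Phi0faith} and the faithfulness of the Cartan expectation $E \colon A_0 \to D$) forces $a = 0$. The only nontrivial maneuver is the support-control argument in the second paragraph, but this is a routine consequence of the functional calculus applied to $n^*n$, so I do not expect any real obstacle.
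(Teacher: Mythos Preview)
Your proof is correct and follows essentially the same approach as the paper's: both argue that $\hat a\equiv 0$ forces $\Delta(n^*a)=0$ for every $n\in N_h$, then use density of $\spn N_h$ and faithfulness of $\Delta$ to conclude $a=0$. The only difference is that you explicitly justify, via the functional-calculus estimate $n=\lim_k n(n^*n)^{1/k}$, why $\Delta(n^*a)$ is supported in $\dom(n)$ (so that vanishing there implies vanishing everywhere); the paper asserts this step without comment, so your version is if anything slightly more complete.
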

\begin{proof} This map is linear since $\Delta$ is.  Injectivity will
follow since $\spn N_h$ is dense in $A$.  Indeed, suppose
$\hat{a}\equiv 0$.  Then for all $n\in N_h$,
\[\Delta(n^*a)(y)=0\] for all $y\in \text{Dom}(n)$.  Thus
$\Delta(n^*a)=0$ for all $n\in N_h$.  By assumption $a\in
\overline{\text{span}}(N_h)$; take a net $\nu_i\in \text{span}(N_h)$
such that $\nu_i\to a$.  By linearity,
\[ \forall i \, \Delta(\nu_i^*a)=0.  \] Thus by continuity
$\Delta(a^*a)=0$.  Since $\Delta$ is faithful, $a=0$.
\end{proof}

Now let 
\[N_{h,c}:=\{n\in N_h: \supp \hat n \text{ is compact}\} \dstext{and}
 A_c:=\spn N_{h,c} \text{ (no closure).}\]   Note that $A_c$ is a
$*$-algebra and by Lemma~\ref{supp n}, for
$a\in A_c$, $\hat a\in C_c(\Sigma;G).$

\begin{lemma}\label{dense subalgebra} Let $(A,D)$ be a \gc\ pair.  Then
\begin{enumerate}
\item\label{Nc dense} $N_{h,c}$ is dense in $N_h$.
\item\label{Ac dense} $A_c$ is dense in $A$.
\item\label{psi bijective} $\Psi: a\mapsto \hat{a}$ sends $A_c$
bijectively onto $C_c(\Sigma;G)$ and $D_c=D\cap A_c$ bijectively onto
$C_c(X)$.
\item\label{psi homo}$\Psi$ is a $*$-algebra homomorphism.
\end{enumerate}
\end{lemma}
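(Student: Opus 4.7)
The plan is to prove the four parts in order, leaning on Lemmas~\ref{supp n} and~\ref{linear} together with the partial action $\alpha$ of Proposition~\ref{prop def alpha}.

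\textbf{Part (1).} Given $n\in N_h$ and $\epsilon>0$, since $n^*n\in D$ (Lemma~\ref{nondeg}(3)) and $D\cong C_0(X)$, I use the continuous functional calculus to build $d_\epsilon=f_\epsilon(n^*n)\in D$, where $f_\epsilon:[0,\|n\|^2]\to[0,1]$ satisfies $f_\epsilon(0)=0$, $f_\epsilon(t)=1$ for $t\geq\epsilon$, and $\supp f_\epsilon\subseteq[a,\|n\|^2]$ for some $a>0$. Then $\supp(d_\epsilon)$ is a compact subset of $\dom(n)$, so by Lemma~\ref{supp n} and the homeomorphism $s\colon Z(n)\to\dom(n)$ of Lemma~\ref{znb}, the support $\supp'(\widehat{nd_\epsilon})=Z(nd_\epsilon)=s^{-1}(\supp'(d_\epsilon))$ has compact closure in $G$. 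Thus $nd_\epsilon\in N_{h,c}$. Finally, $\|n-nd_\epsilon\|^2=\|(1-d_\epsilon)n^*n(1-d_\epsilon)\|\to 0$ by functional calculus.

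\textbf{Part (2).} Immediate from (1) together with Lemma~\ref{nondeg}(1), since $N_{h,c}$ then spans a dense subspace of $\overline{\spn}\, N_h=A$.

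\textbf{Part (3).} Injectivity on $A_c$ is inherited from Lemma~\ref{linear}. For surjectivity onto $C_c(\Sigma;G)$, fix $f\in C_c(\Sigma;G)$ and view $\supp(f)\subseteq G$; by Lemma~\ref{znb} finitely many $Z(n_1),\dots,Z(n_k)$ cover $\supp(f)$, and taking a partition of unity $\{\rho_i\}\subseteq C_c(G)$ subordinate to this cover (these $\rho_i$ automatically produce covariant functions $\rho_i\circ q$ on $\Sigma$), I decompose $f=\sum f_i$ with $\supp(f_i)\subseteq q^{-1}(Z(n_i))$. For each $i$, define $h_i\in C_c(\dom(n_i))\subseteq D$ by $h_i(x)=f_i([n_i,x]_\Sigma)/|n_i|(x)$ (extended by zero); compactness of $\supp(f_i)$ in $Z(n_i)$ bounds $|n_i|$ below on this support, so $h_i$ is continuous with compact support. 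A direct calculation using $\widehat{n_ih_i}([n_i,x]_\Sigma)=\Delta(n_i^*n_ih_i)(x)/|n_i|(x)=|n_i|(x)h_i(x)$, together with covariance, yields $\widehat{n_ih_i}=f_i$. Thus $a=\sum n_ih_i\in A_c$ maps to $f$. The statement $\Psi(D_c)=C_c(X)$ follows because for $d\in D$ and $n\in N_h$, $\Delta(n^*d)=0$ unless $n\in A_0$ (by homogeneity), and on $\unit{\Sigma}\cong X$ one has $\hat d(x)=d(x)$.

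\textbf{Part (4).} For the involution, I must show $\widehat{a^*}([n,x])=\overline{\hat a([n^*,\alpha_n(x)])}$. Since $|n|(x)=|n^*|(\alpha_n(x))$ (apply Proposition~\ref{prop def alpha} to the identity $n^*(nn^*)n=(n^*n)^2$), this reduces to $\Delta(an)(x)=\Delta(na)(\alpha_n(x))$ for $x\in\dom(n)$. But Proposition~\ref{ndel} applied to $na$ gives $n^*\Delta(na)n=\Delta(n^*nan)=n^*n\,\Delta(an)$, and evaluating both sides at $x$ with Proposition~\ref{prop def alpha} produces the desired equality after dividing by $(n^*n)(x)>0$. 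For multiplicativity, density reduces the problem to proving $\widehat{mn}=\hat m*\hat n$ for $m,n\in N_h$. Both sides are continuous sections of the line bundle supported in $Z(mn)$: the left by Lemma~\ref{supp n}, the right because any nonzero term in $(\hat m*\hat n)([p,z])$ requires $\dot\eta=[m,w]_G\in Z(m)$ with $\dot\eta^{-1}[p,z]_G\in Z(n)$, forcing $[p,z]_G=[mn,z]_G$. At $[mn,z]_\Sigma$ with $z\in\dom(mn)$, using $|mn|(z)^2=(m^*m)(\alpha_n(z))(n^*n)(z)$ from Proposition~\ref{prop def alpha}, a direct computation gives $(\hat m*\hat n)([mn,z])=|m|(\alpha_n(z))\cdot|n|(z)=|mn|(z)=\widehat{mn}([mn,z])$.

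The main obstacle lies in Part (4), specifically reconciling the convolution formula (which involves the choice of representative $\eta\in q^{-1}(\dot\eta)$ and the nontrivial weights $|m|,|n|,|mn|$ at different points $x,\alpha_n(x)$) with the algebraic identity for $\widehat{mn}$; the key algebraic inputs are Proposition~\ref{ndel} and the compatibility $|n|(x)=|n^*|(\alpha_n(x))$.
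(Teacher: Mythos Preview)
Your proposal is correct and follows essentially the same approach as the paper. The only minor differences are stylistic: in (1) you use functional calculus on $n^*n$ where the paper multiplies by a compactly supported approximate unit for $D$; in (4) you verify the involution by unwinding Proposition~\ref{ndel} directly, whereas the paper invokes the already-established identity $\psi_{(n,x)}^*=\psi_{(n^*,\alpha_n(x))}$ from~\eqref{finv}, and for multiplication you evaluate at the canonical representative $[mn,z]_\Sigma$ rather than tracking $\bbT$-phases $z_v,z_w$ as the paper does. One small wording issue: in (4) you say ``density reduces the problem,'' but what is actually used is \emph{linearity} on $A_c=\spn N_{h,c}$; no limiting argument is needed or available at this stage.
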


\begin{proof} For \eqref{Nc dense}, let $(e_i)$ be an
  approximate unit for $D$ with $e_i\in C_c(X)$ for every
  $i$.  By Lemma~\ref{nondeg}(4), $(e_i)$ is also an
  approximate unit for $A$.   Thus $n=\lim ne_i$.  So to prove
$N_{h,c}$ is dense it suffices to show that $nd\in N_{h,c}$ for all $d\in
C_c(X)$.  Given $d\in C_c(X)$, let $d_1\in C_c(X)$ be such that
 $\supp'(d_1)\supseteq \supp(d)$.    
By
Lemma~\ref{supp n},
$\supp'(\widehat{nd_1})=Z(nd_1)$.  Recalling that $s|_{Z(nd_1)}$ is a
homeomorphism of $Z(nd_1)$ onto $\dom(nd_1)$, we see that
$\overline{Z(nd)}$ is compact because $\overline{Z(nd)}\subseteq
Z(nd_1)$ and $\dom(nd)=\supp'(n^*n)\cap  \supp'(d)$ has compact
closure in $\dom(nd_1)$.

Now \eqref{Ac dense} follows immediately from \eqref{Nc dense}.

Lemma~\ref{linear} shows that
$\Psi$ is injective and $\Psi(D_c)=C_c(\fS)\simeq C_c(X)$,  so to obtain \eqref{psi bijective}, we must show $\Psi(A_c)=C_c(\Sigma,G)$.
Now $ C_c(\Sigma;G)$ is the span of sections of the
line bundle supported on sets of the form $Z(n)$, as the $Z(n)$ form a
basis for $G$ and we can use a partition of unity argument.  Thus it
suffices to show that for  $n\in N_h$, every $f\in C_c(\Sigma;G)$ with support in
$Z(n)$ is in the image of $\Psi$.  To proceed note the following.
\begin{enumerate}[i.]
\item The line bundle is trivial over $Z(n)$: this is true because
  $q^{-1}(Z(n))=\{z\cdot [n,x]_\Sigma: x\in \dom(n), z\in \bbT\}$ and
  the map $\bbT\times Z(n) \ni (z,[n,x]_G)\mapsto [zn,x]_\Sigma$ is a
  homeomorphism of $\bbT\times Z(n)$ onto $q^{-1}(Z(n))$.
\item The source map of $G$ sends $Z(n)$ homeomorphically to $\{x:
n^*n(x)\neq 0\}$ because $Z(n)$ is an open bisection. 
\end{enumerate}

Now let $f$ be a section of the line bundle supported
on $Z(n)$.  By the first item above we can view $f$ as a function.  By
item (ii), $f=d\circ (s|_{Z(n)})$ for some $d\in D$. Now take
$a=\frac{n{\overline d}}{(n^*n)^{1/2}}$.  We show $\hat{a}=f$.  Indeed,
\[\hat{a}[m,x]=\frac{\Delta(dn^*m)(x)}{(n^*n)^{1/2}(x)(m^*m)^{1/2}(x)},\]
which is $0$ unless the germ of $m$ is the same as $n$.  So we can
assume that $n=m$. Hence the above becomes
\[\frac{d(x)n^*n(x)}{n^*n(x)}=d(x)=f([n,x])\] Thus $\hat{a}=f$ and the
claim holds.

Part~\eqref{psi bijective} now follows.

It remains to show \eqref{psi homo}.
By linearity it is enough to check that $\widehat{mn}=\hat{m}*\hat{n}$
and $\widehat{m^*}=(\hat{m})^*$ for $m,n\in N_{h,c}$.
Using~\eqref{covmul} we compute:
\[ \hat m * \hat n([\nu,x]_\Sigma)=\sum_{[v,y]_\Sigma
    [w,x]_\Sigma=[\nu,x]_\Sigma} \hat m([v,y]_\Sigma)\, \hat
  n([w,x]_\Sigma)\] (again, for each factorization
$[v,y]_G[w,x]_G=[\nu,x]_G$ only one factorization
$[v,y]_\Sigma[w,x]_\Sigma=[\nu,x]_\Sigma$ is chosen).  But
$ \hat m([v,y]_\Sigma)\, \hat n([w,x]_\Sigma)=0$ unless
$[v,y]_G=[m,y]_G$ and $[w,x]_G=[n,x]_G$.  When this occurs, there exist
$z_v, z_w\in \bbT$ so that $[v,y]_\Sigma=[z_vm, y]_\Sigma$ and
$[w,x]_\Sigma=[z_wn,x]_\Sigma$.  As
$[v,y]_\Sigma[w,x]_\Sigma=[\nu,x]_\Sigma$, we have $y=\alpha_n(x)$  and
$[\nu,x]_\Sigma=[z_vz_wmn,x]_\Sigma$.  So
\begin{align*}
  \hat m * \hat n([\nu,x]_\Sigma)&=\hat m([z_vm,y]_\Sigma)\hat
                                   n([z_wn,x]_\Sigma)\\
&=\overline{z_vz_w}\sqrt{(m^*m)(y) (n^*n)(x)} =\overline{z_vz_w}\left( \frac{(n^*m^*mn)(x)}{(n^*n)(x)}
                                                       (n^*n)(x)\right)^{1/2}\\
&=\widehat{mn}([z_vz_wmn,x]_\Sigma) =\widehat{mn}([\nu,x]_\Sigma),                                          \end{align*}
as desired.

To see $\widehat{m^*}=(\hat{m})^*$, use Lemma~\ref{eigenprop} and~\eqref{finv}:
\begin{align*}
  \widehat{m^*}([n,x]_\Sigma)&=\psi_{(n,x)}(m^*)=\overline{\psi_{(n,x)}^*(m)}
                        =\overline{\psi_{(n^*,\alpha_n(x))}(m)}\\
  &=\overline{\hat m( [n,x]_\Sigma^{-1})}=(\hat m)^*([n,x]_\Sigma).\qedhere
\end{align*}
\end{proof}

\begin{lemma}\label{isomPsi}
When $C_c(\Sigma;G)$ is equipped with the
reduced norm, $\Psi|_{A_c}: A_c\rightarrow
C_c(\Sigma;G)$ is an isometric $*$-isomorphism.  
\end{lemma}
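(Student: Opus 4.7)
The plan is to establish the isometry $\|a\|_A = \|\Psi(a)\|_r$ for $a \in A_c$ by realizing both norms as a supremum of operator norms indexed by $x \in X$. For each $x$, the state $x \circ \Delta$ on $A$ and the positive functional $\eps_x$ on $C_c(\Sigma;G)$ (from Remark~\ref{altpix}) will yield unitarily equivalent GNS representations, intertwined by $\Psi$.

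First I would verify the compatibility $\eps_x(\Psi(a)) = (x \circ \Delta)(a)$ for $a \in A_c$. Under the identification of $\unit{\Sigma}$ with $X$ via $x \leftrightarrow [d,x]_\Sigma$ for any nonnegative $d \in D$ with $d(x) > 0$, we have $\hat a(x) = \psi_{(d,x)}(a) = \Delta(a)(x)$, which gives $\eps_x(\Psi(a)) = \Delta(a)(x)$. Since $\Psi|_{A_c}$ is a $*$-homomorphism (Lemma~\ref{dense subalgebra}(4)), this upgrades to $\eps_x(\Psi(a)^* \Psi(a)) = (x\circ\Delta)(a^*a)$. Now let $\rho_x$ be the GNS representation of $A$ for the state $x\circ\Delta$ on Hilbert space $\sH_x^A$. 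The compatibility shows that the map
\[
a + L_{x\circ\Delta} \longmapsto \Psi(a) + L_{\eps_x}, \qquad a \in A_c,
\]
defined on a dense subspace of $\sH_x^A$, is a well-defined isometry. Its range contains the dense subset $C_c(\Sigma;G)/L_{\eps_x}$ of $\sH_{\eps_x}$, so it extends to a unitary $V_x \colon \sH_x^A \to \sH_{\eps_x}$. Composing with the unitary $W \colon \sH_{\eps_x}\to\sH_x$ of Remark~\ref{altpix} produces a unitary $U_x := W V_x$. A direct check using the intertwining $W\pi_{\eps_x} = \pi_x W$ and the GNS formula $\rho_x(a)(b + L_{x\circ\Delta}) = ab + L_{x\circ\Delta}$ shows that $U_x \rho_x(a) = \pi_x(\Psi(a)) U_x$ for every $a \in A_c$, so $\|\rho_x(a)\| = \|\pi_x(\Psi(a))\|$.

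Next I would show that $\bigoplus_{x \in X} \rho_x$ is faithful on $A$, yielding $\|a\|_A = \sup_{x \in X} \|\rho_x(a)\|$. If $\rho_x(a) = 0$ for every $x$, then for all $b \in A$, $(x\circ\Delta)(b^*a^*ab) = 0$, so $\Delta(b^*a^*ab) = 0$. Choosing $b = e_i$ from an approximate unit $(e_i) \subset D$ for $A$ (Lemma~\ref{nondeg}(4)) and passing to the limit gives $\Delta(a^*a) = 0$, and faithfulness of $\Delta$ (Lemma~\ref{Phi0faith}) forces $a = 0$. Combining everything,
\[
\|a\|_A \;=\; \sup_{x \in X} \|\rho_x(a)\| \;=\; \sup_{x \in X} \|\pi_x(\Psi(a))\| \;=\; \|\Psi(a)\|_r
\]
for all $a \in A_c$, proving $\Psi|_{A_c}$ is isometric. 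The main obstacle is the construction and verification of the intertwining $U_x$; once that is in hand, the faithfulness of the direct sum representation is a routine consequence of the faithfulness of $\Delta$.
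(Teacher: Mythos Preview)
Your proof is correct and follows essentially the same approach as the paper: both construct, for each $x\in X$, a unitary intertwining the GNS representation of $A$ at $x\circ\Delta$ with the regular representation $\pi_x$ of $C_c(\Sigma;G)$, and then conclude by faithfulness of $\Delta$. Your verification of the key identity $\eps_x\circ\Psi = x\circ\Delta$ is in fact slightly more streamlined than the paper's, which instead establishes $\rho_x(mn)=\eps_x(\hat m * \hat n)$ for $m,n\in N_{h,c}$ by a direct convolution computation invoking Proposition~\ref{ndel}; you bypass this by evaluating $\hat a$ at the unit and using that $\Psi$ is already known to be a $*$-homomorphism from Lemma~\ref{dense subalgebra}.
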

\begin{proof}
Lemma~\ref{dense subalgebra} gives  $\Psi|_{A_c}$ is a $*$-isomorphism of $A_c$ onto $C_c(\Sigma;G)$.

Fix $x\in X$.  Using Remark~\ref{altpix}, we may regard $\pi_x$ as the
GNS representation of $C_c(\Sigma;G)$ arising from the functional
$\eps_x$.    On the
other hand, the state ${\rho_x}:=x\circ \Delta$ 
determines the GNS representation $(\pi_{\rho_x},\sH_{\rho_x})$ of $A$.   Let
$L_x\subseteq C^*_r(\Sigma;G)$ and $L_{\rho_x}\subseteq A$ be the left
kernels of $\eps_x$ and ${\rho_x}$ respectively. 

We claim that for $n,m\in N_{h,c}$, 
\begin{equation}\label{reps}
  {\rho_x}(mn)=\eps_x(\hat m *\hat n).
\end{equation}
To see this, choose $d\in D$
with $d(x)=1$, so that  $[d,x]_\Sigma\in \unit{\Sigma}$.  For
$[n,x]_\Sigma$ with $[n,x]_\Sigma^{-1}[n,x]_\Sigma=[d,x]_\Sigma$,
a computation similar to that used in the proof of
Lemma~\ref{dense subalgebra}\eqref{psi homo} gives
\begin{align*}
  \eps_x(\hat m *\hat n)&=\hat m *\hat n([d,x]_\Sigma)= \hat
                          m([n^*,\alpha_n(x)])\hat n([n,x]_\Sigma)\\
  &=\frac{\Delta(nm)(\alpha_n(x))}{\sqrt{(nn^*)(\alpha_n(x))}}
    (n^*n)(x)^{1/2}=\Delta(nm)(\alpha_n(x))\\
  &\overset{\eqref{ndel3}}=\Delta(mn)(x)={\rho_x}(mn).
\end{align*}

For $a\in A_c$,~\eqref{reps} gives
\[{\rho_x}(a^*a)=\eps_x(\widehat{a^*} * \hat a).\]  Thus for $a\in A_c$,
the map
$a+L_{\rho_x}\mapsto \hat a+L_x$ extends to an isometry $W_x:
\sH_{\rho_x}\rightarrow \sH_x$.  Lemma~\ref{dense subalgebra}\eqref{psi
  bijective} implies that $W_x$ is onto.   For $m, n\in N_h$,
\[W_x\pi_{\rho_x}(m)(n+L_{\rho_x})=W_x(mn+L_{\rho_x})=\hat m*\hat n +L_x=\pi_x(\hat
  m)W_x(n+L_{\rho_x}).\]  It follows that $W_x\pi_{\rho_x}(m)W_x^*=\pi_x(\hat m)$.
Hence for $a\in A_c$,
\[W_x\pi_{\rho_x}(a)W_x^*=\pi_x(\Psi(a)).\]

Finally, for $a\in A_c$,
\[\norm{\Psi(a)}_{C^*_r(\Sigma;G)}=\sup_x\norm{\pi_x(\Psi(a))}
  =\sup_x\norm{\pi_{\rho_x}(a)}=\norm{a}_A, \] with the last equality
following from the fact that $\Delta$ is faithful (as in the proof of
Lemma~\ref{linear}).
\end{proof}

We now come to the main result of this section.

\begin{thm}\label{mainthm1} Let $(A,D)$ be a \gc\ pair.
 Then there exists a graded twist
\begin{equation*} \xymatrix{ \T\times \go\ar[r]& \Sigma
\ar[dr]_{c_{_\Sigma}} \ar[r] & G \ar[d]^{c_{_G}} \\ & &
\Gamma &}
\end{equation*}
and a  $\hat\Gamma$-covariant $*$-isomorphism
$\Psi: A\to C^*_r(\Sigma;G)$ such that $\Psi(D)=C_0(\unit{G})$. \end{thm}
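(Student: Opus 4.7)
The plan is to assemble the pieces already built in the preceding lemmas and verify the remaining claims. By Proposition~\ref{G top1} the groupoid $\Sigma \to G$ is a twist, and the maps $c_\Sigma, c_G$ defined in~\eqref{grdef} are continuous groupoid homomorphisms making the triangle commute (since each $n \in N_h$ belongs to some $A_t$ and the grade is constant on equivalence classes of $\sim_\Sigma$ and $\sim_G$). So the structure of the graded twist is already in hand and only the isomorphism statement requires work.

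For the $*$-isomorphism, I would start from Lemma~\ref{isomPsi}, which gives that $\Psi|_{A_c} : A_c \to C_c(\Sigma; G)$ is a $*$-isomorphism that is isometric for the reduced norm. Since $A_c = \spn N_{h,c}$ is dense in $A$ by Lemma~\ref{dense subalgebra}\eqref{Ac dense}, and $C_c(\Sigma;G)$ is dense in $C^*_r(\Sigma;G)$ by definition, $\Psi|_{A_c}$ extends uniquely to an isometric $*$-homomorphism $\Psi : A \to C^*_r(\Sigma;G)$ whose image contains $C_c(\Sigma;G)$ and is closed, hence equal to $C^*_r(\Sigma;G)$. Thus $\Psi$ is the desired $*$-isomorphism.

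Next I would verify $\Psi(D) = C_0(\unit{G})$. The identification of $\unit{G}$ with $X$ via $x \mapsto [d,x]_G$ (with $d(x) > 0$) together with Lemma~\ref{dense subalgebra}\eqref{psi bijective} gives $\Psi(D_c) = C_c(X) = C_c(\unit{G})$. Since $D_c$ is norm-dense in $D$ (any approximate unit for $D$ can be taken in $C_c(X)$) and $\Psi$ is isometric, taking closures yields $\Psi(D) = C_0(\unit{G})$.

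Finally I would check $\hat\Gamma$-covariance. By Lemma~\ref{hatgammaact} the action of $\hat\Gamma$ on $C^*_r(\Sigma;G)$ is characterized on $C_c(\Sigma;G)$ by $(\omega \cdot f)(\dot\gamma) = \langle \omega, c_G(\dot\gamma)\rangle f(\dot\gamma)$. For a homogeneous normalizer $n \in A_t \cap N_h$, Lemma~\ref{supp n} shows $\hat n$ is supported on $Z(n)$, on which $c_G \equiv t$ by~\eqref{grdef}; on the other hand, Lemma~\ref{At prop}\eqref{it1} gives $\omega \cdot n = \langle\omega,t\rangle n$, so $\widehat{\omega \cdot n} = \langle \omega, t\rangle \hat n = \omega \cdot \hat n$. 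By linearity this covariance holds on $A_c$, and by continuity of both actions (using that $\Psi$ is isometric) it extends to all of $A$.

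The only genuinely subtle point in the whole argument is the isometry in Lemma~\ref{isomPsi}, which is already established; everything needed for Theorem~\ref{mainthm1} beyond that is a bookkeeping exercise combining the density statements, the groupoid/twist construction, and the explicit formula relating grading on $A$ to the grading cocycles $c_\Sigma, c_G$. The main mild obstacle is making sure the identification of $\unit{G}$ with $X$ from~\eqref{Dsigmaid} is used consistently so that $\Psi(D)$ maps onto $C_0(\unit{G})$ rather than merely an isomorphic copy thereof.
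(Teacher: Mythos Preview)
Your proposal is correct and follows essentially the same approach as the paper. The only minor difference is that for $\Psi(D)=C_0(\unit{G})$ you invoke Lemma~\ref{dense subalgebra}\eqref{psi bijective} (which already gives $\Psi(D_c)=C_c(X)$) and pass to closures, whereas the paper redoes the explicit computation of $\hat d$ on $\Sigma$ to verify directly that $\hat d$ vanishes off $\unit{G}$ and agrees with $d$ there; your route is slightly more economical but amounts to the same thing.
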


\begin{proof}
  Lemmas~\ref{dense subalgebra} and~\ref{isomPsi} show that $\Psi$
  determines a $*$-isomorphism of $A$ onto $C_r^*(\Sigma;G)$ and the
  construction of the graded twist shows $\Psi$ is $\hat\Gamma$-covariant.  It
  remains to show that $\Psi(D)=C_0(X)$.

  For
  $d\in D$ and $[n,x]_\Sigma\in \Sigma$,
\[\hat d([n,x]_\Sigma) =\frac{\Delta(n^*)(x)}{|n|(x)} d(x).\]
Changing perspective to viewing $\hat d$ as a section of the line
bundle instead of as a covariant function and recalling that
$[n,x]_G\in \unit{G}$ if and only if $\Delta(n)(x)\neq 0$, we get
\begin{align*}
  \hat d([n,x]_G)&=\left[\frac{\Delta(n^*)(x)}{|n|(x)} d(x),
                   [n,x]_\Sigma\right]\\
&\overset{\eqref{Tact},\, \eqref{Tact1}}=
    \begin{cases} 0 &\text{if $[n,x]_G\notin \unit{G}$}\\
      \left[d(x), [\frac{\Delta(n^*)(x)}{|n|(x)} n, x]_\Sigma\right] &
      \text{if $[n,x]_G\in \unit{G}$.}\end{cases}
\end{align*}
As $[\frac{\Delta(n^*)(x)}{|n|(x)} n,
  x]_\Sigma\in\unit{\Sigma}$, under the identification of $X$ with
  $\unit{\Sigma}$,  $\left[d(x),
  [\frac{\Delta(n^*)(x)}{|n|(x)} n, x]_\Sigma\right]$ and $[d(x),x]$
  represent the same element of the line bundle $L$.  Thus
                \[ \hat d([n,x]_G)=
    \begin{cases} 0 &\text{if $[n,x]_G\notin \unit{G}$}\\
      \left[d(x), x]\right] & \text{if $[n,x]_G\in \unit{G}$},\end{cases}
\] showing that $\Psi(d)\in C_0(X)$.  On the other hand, if $f\in
C_c(\Sigma;G)$ vanishes off $\unit{G}$, define 
$d\in D$  as follows.  For $x\in X$, choose
$n\in N_h$ so that $(n^*n)(x)\neq 0$; then let $d(x)$ be the unique
scalar  satisfying $f([n^*n,x]_G)=[d(x),x]\in L$. Then $\hat d(x)=f$.
It follows that $C_0(X)=\Psi(D)$ and the proof is complete.
\end{proof}

\section{$\Gamma$-Cartan Pairs from $\Gamma$-Graded Twists} \label{CoT}
In the previous section, we associated a
graded twist $(\Sigma,G,\Gamma)$ to a $\Gamma$-Cartan pair $(A,D)$ and
showed that $(A,D)$ can be recovered from $(\Sigma,G,\Gamma)$.  The purpose
of this section is to produce a $\Gamma$-Cartan pair from any 
suitable twist graded by the abelian group $\Gamma$.

Throughout this section we assume the following:
\begin{dremark}{Assumptions}\label{CoTAmpt}
   We fix a
$\Gamma$-graded twist
\begin{equation}\label{comm}\xymatrix{ \T\times \go\ar[r]& \Sigma
\ar[dr]_{c_{_\Sigma}} \ar[r]^q & G \ar[d]^{c_{_G}} \\ & &
\Gamma, &}
\end{equation}
with $G$ \'etale (and Hausdorff) where the diagram commutes and 
\begin{enumerate}
  \item $\Gamma$ is a discrete
    abelian group;
    \item $c_{_G}$ and $c_{_\Sigma}$ are (continuous) groupoid
homomorphisms; and  
  \item  $c_{_G}\inv(0)$ is effective. 
    \end{enumerate}
  \end{dremark}

The homomorphisms $c_{_G}$ and $c_{_\Sigma}$ are often called {\it
cocycles} in the literature as they are elements of the first groupoid
cohomology group with coefficients.  We will persist in referring to
$c_{_\Sigma}$ and $c_{_G}$ as cocycles here.

For notational convenience, let
  \[\Rho:=c\inv_\Sigma(0) \dstext{and}
R:=c\inv_G(0).\]  The commutativity of \eqref{comm} yields
\[\Rho=q\inv(R)\dstext{and} \unit{G}=\unit{R}.\]  Also, the continuity of $c_G$ ensures $R$ is a clopen
subgroupoid of $G$.
Thus we obtain the twist
  \begin{equation}\label{subt}
    \bbT\times \unit{G}\hookrightarrow
    \Rho\overset{q|_{\Rho}}\twoheadrightarrow R.
  \end{equation}
Since $R$ is \'etale and effective, it follows from
Renault's work in \cite[Section 4]{Ren08} that 
$C_0(\unit{G})$ is a Cartan MASA in $C^*_r(\Rho;R)$.  (Renault makes
the assumption that $\unit{R}$ is second countable, but that
assumption is not required to show that $(C^*_r(\Rho;R),
C_0(\unit{R}))$ is a Cartan pair. A close inspection of \cite{Ren08} shows that he uses $R$ effective instead of $R$ topologically principal, but these notions coincide when $\unit{R}$ is second countable.)

  By Lemma~\ref{smalltwist}, the inclusion 
$C_c(\Rho;R)\hookrightarrow C_c(\Sigma;G)$ given by extension by zero
extends to a $*$-monomorphism 
\[i: C^*_r(\Rho;R)\hookrightarrow \cpair.\]

\begin{prop} \label{fixedcartan}
The image of $C^*_r(\Rho;R)$ under $i$ is the fixed
point algebra of the action of $\hat\Gamma$ on $\cpair$, that is,
$i(C^*_r(\Rho;R))=\cpair^{\hat{\Gamma}}$. \end{prop}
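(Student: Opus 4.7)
The plan is to prove both inclusions separately, with the forward direction following from the definition of the action and the reverse direction exploiting the conditional expectation arising from averaging over $\hat\Gamma$.

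For the inclusion $i(C^*_r(\Rho;R))\subseteq \cpair^{\hat\Gamma}$, I would start at the dense subalgebra $C_c(\Rho;R)$. Given $h\in C_c(\Rho;R)$, the extension-by-zero $i(h)$ vanishes off $\Rho=q^{-1}(R)$, so $i(h)(\dot\gamma)\neq 0$ forces $c_{_G}(\dot\gamma)=0$. Lemma~\ref{hatgammaact} then immediately gives $(\omega\cdot i(h))(\dot\gamma)=\langle\omega,c_{_G}(\dot\gamma)\rangle\, i(h)(\dot\gamma)=i(h)(\dot\gamma)$ for every $\omega$ and $\dot\gamma$. Since $\cpair^{\hat\Gamma}$ is norm-closed and $i$ is continuous (in fact isometric by Lemma~\ref{smalltwist}), the inclusion extends to all of $C^*_r(\Rho;R)$.

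For the reverse inclusion, the key tool is the conditional expectation
\[\Phi_0\colon \cpair\to\cpair^{\hat\Gamma},\qquad \Phi_0(a)=\int_{\hat\Gamma}\omega\cdot a\,d\omega,\]
which is faithful by Lemma~\ref{Phi0faith} and, crucially, contractive. For $f\in C_c(\Sigma;G)$, a pointwise computation using Lemma~\ref{hatgammaact} and the orthogonality of characters gives
\[\Phi_0(f)(\dot\gamma)=f(\dot\gamma)\int_{\hat\Gamma}\langle\omega,c_{_G}(\dot\gamma)\rangle\,d\omega=\begin{cases} f(\dot\gamma) & \text{if } c_{_G}(\dot\gamma)=0,\\ 0 & \text{otherwise.}\end{cases}\]
Because $R=c_{_G}^{-1}(0)$ is a clopen subgroupoid (by continuity of $c_{_G}$ and discreteness of $\Gamma$), the function $\Phi_0(f)$ is the restriction of $f$ to $R$, continuous and compactly supported; that is, $\Phi_0(f)\in i(C_c(\Rho;R))$.

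Now take an arbitrary $a\in\cpair^{\hat\Gamma}$ and approximate it in norm by a net $f_i\in C_c(\Sigma;G)$. Contractivity of $\Phi_0$ gives $\Phi_0(f_i)\to\Phi_0(a)=a$, while the preceding paragraph shows each $\Phi_0(f_i)$ lies in $i(C_c(\Rho;R))\subseteq i(C^*_r(\Rho;R))$. Since $i(C^*_r(\Rho;R))$ is closed in $\cpair$ (again by Lemma~\ref{smalltwist}, which identifies it isometrically with $C^*_r(\Rho;R)$), we conclude $a\in i(C^*_r(\Rho;R))$, completing the proof. The main step requiring care is verifying that the pointwise formula for $\Phi_0(f)$ on $C_c(\Sigma;G)$ actually agrees with the Bochner integral definition at the $C^*$-level; this can be checked either by appealing to Proposition~\ref{r4.2} to interpret both sides as elements of $C_0(\Sigma;G)$, or by observing that pointwise evaluation on $G$ is continuous in the reduced norm via~\eqref{jnorm}.
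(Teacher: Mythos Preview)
Your proof is correct and follows essentially the same approach as the paper: both directions are handled identically, with the forward inclusion checked on $C_c(\Rho;R)$ and the reverse obtained by showing $\Phi_0$ sends $C_c(\Sigma;G)$ into $i(C_c(\Rho;R))$ and then approximating. Your added remark about justifying the pointwise formula for $\Phi_0(f)$ via Proposition~\ref{r4.2} or~\eqref{jnorm} is a nice bit of care that the paper leaves implicit.
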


\begin{proof} First notice that $i(C^*_r(\Rho;R))\subseteq
\cpair^{\hat{\Gamma}}$:  indeed,   if $f\in C_c(\Rho;R)$ and
$\dot\gamma\in G$,
\begin{align*}
  \omega\cdot i(f)(\dot\gamma)&=\<\omega,c_G(\dot\gamma)\>i(f)(\dot\gamma)=\begin{cases} 0 &
     \text{ if }\dot\gamma\notin R\\ \<\omega,0\>
     i(f)(\dot\gamma)=i(f)(\dot\gamma) &\text{ if }\dot\gamma\in
     R \end{cases}\\ &=i(f)(\dot\gamma).
\end{align*}

We now turn to showing
$\cpair^{\hat\Gamma}\subseteq i(C^*_r(\Rho;R))$.  First suppose
$a\in \cpair^{\hat{\Gamma}}\cap C_c(\Sigma;G)$.  Then for
$\dot\gamma\in G$,
\[a(\dot\gamma)=\int_{\hat\Gamma} (\omega\cdot a)(\dot\gamma)\, d\omega
  \overset{\eqref{hatgammaact}}=
  \int_{\hat\Gamma} \innerprod{\omega,c_G(\dot\gamma)}a(\dot\gamma)\,
  d\omega=
  a(\dot\gamma)\int_{\hat\Gamma} \innerprod{\omega,c_G(\dot\gamma)}\,
  d\omega, \] which vanishes unless $\dot\gamma\in R$.  Thus $a\in
i(C_c(\Rho,R))$.

For general $a\in C_r^*(\Gamma;G)^{\hat\Gamma}$, choose a net  $f_i\in C_c(\Sigma;G)$ so
that $f_i\rightarrow a$.  Then $\Phi_0(f_i)\rightarrow \Phi_0(a)=a$.
Also note that $\Phi_0(C_c(\Sigma;G))\subseteq C_c(\Sigma;G)$, so
$\Phi_0(f_i)\in i(C_c(\Rho;R))$.   It follows that $a\in
i(C^*_r(\Rho;R))$.
\end{proof}

Here is the main result of this section.
\begin{prop} \label{prop53} The pair $(C_r^*(\Sigma;G),C_0(\go))$ is a \gc\ pair.
\end{prop}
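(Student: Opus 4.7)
The plan is to verify the three conditions in Definition \ref{defGammaCartan}: topological grading of $A:=\cpair$ by $\Gamma$, that $D:=C_0(\unit{G})$ is Cartan in the degree-zero component $A_0$, and that normalizers of $D$ in $A$ densely span $A$.

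For the grading, Lemma \ref{hatgammaact} furnishes a strongly continuous action of $\hat{\Gamma}$ on $A$ determined by $c_G$, and Remark \ref{tgchar} then gives a topological grading $\{A_t\}_{t\in\Gamma}$ with $A_0=A^{\hat\Gamma}$ and a faithful conditional expectation $\Phi_0\colon A\to A_0$. Proposition \ref{fixedcartan} identifies $A_0$ with $i(C_r^*(\mathcal{P};R))$, so I will use this identification freely.

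Next, to see $D$ is Cartan in $A_0$, I will appeal to the twist \eqref{subt} obtained by restricting to the clopen subgroupoid $R=c_G^{-1}(0)$. By hypothesis $R$ is effective and étale (and Hausdorff), so Renault's results in \cite[\S4]{Ren08}---which, as noted just after \eqref{subt}, require only effectiveness rather than topological principality when $\unit{R}$ need not be second countable---yield that $C_0(\unit{R})=C_0(\unit{G})$ is a Cartan subalgebra of $C_r^*(\mathcal{P};R)$. Via the inclusion $i$ this is exactly the statement that $D$ is Cartan in $A_0$.

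Finally, I will exhibit a dense supply of normalizers for $D$ in $A$. Standard étale-twist technology (using Definition \ref{twistdef}(5) and the local triviality of the line bundle $L$) shows that if $f\in C_c(\Sigma;G)$ has $\supp(f)$ contained in an open bisection $B$, then for every $d\in C_0(\unit{G})$ one has $f*d*f^*, f^**d*f\in C_0(\unit{G})$; indeed, $f^**d*f$ is supported on $s(B)\subseteq \unit{G}$ and $f*d*f^*$ on $r(B)\subseteq \unit{G}$. Hence every such $f$ lies in $N(A,D)$. Because the open bisections form a basis for the topology of the étale groupoid $G$ (Lemma \ref{znb}-type reasoning applied to $G$ itself, or the very definition of étale), a partition-of-unity argument exactly as in the proof of Lemma \ref{dense subalgebra}\eqref{psi bijective} shows that $C_c(\Sigma;G)$ is the linear span of bisection-supported sections of $L$. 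Since $C_c(\Sigma;G)$ is dense in $A$, I conclude that $\operatorname{span}N(A,D)$ is dense in $A$.

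The one step requiring actual care is the verification that bisection-supported sections normalize $D$ and the accompanying partition-of-unity argument; everything else is bookkeeping built from the tools already in hand (Proposition \ref{fixedcartan}, Lemma \ref{hatgammaact}, and Renault's Cartan result for effective twists). With these three conditions checked, $(A,D)$ is a $\Gamma$-Cartan pair, completing the proof.
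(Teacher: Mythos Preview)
Your proposal is correct and follows essentially the same route as the paper: invoke Lemma~\ref{hatgammaact} for the $\hat\Gamma$-action (hence the grading), use Proposition~\ref{fixedcartan} to identify $A_0$ with $C_r^*(\Rho;R)$, cite Renault's Cartan theorem for effective twists to get $D$ Cartan in $A_0$, and then use that $G$ is \'etale to see normalizers span densely. The paper compresses your last step into the single sentence ``Since $G$ is \'etale, $\spn N$ is dense in $C^*_r(\Sigma;G)$,'' whereas you spell out why bisection-supported sections normalize $D$ and invoke a partition-of-unity argument; this is the same content at a different level of detail.
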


\begin{proof} It is well known that $C_0(\go)$ is an abelian
subalgebra of $C_r^*(\Sigma;G)$ that contains an approximate unit for
$C_r^*(\Sigma;G)$ \cite[Lemma~3.2]{Ren87}.  Lemma~\ref{hatgammaact}
gives an action of $\hat\Gamma$ on $C_r^*(\Sigma;G)$.  We have already
observed that $C_0(\unit G)$ is a Cartan MASA in $C^*_r(\Rho;R)$, so 
Lemma~\ref{fixedcartan} shows that $C_0(\unit{G})$ is a Cartan MASA in
$C^*_r(\Sigma;G)^{\hat\Gamma}$.   Since $G$ is \'etale, $\spn N$ is
dense in $C^*_r(\Sigma;G)$.  Thus $(C^*_r(\Sigma;G), C_0(\unit{G}))$ is \gc.
\end{proof}

We close this section with a result describing the supports of homogeneous normalizers.   This is necessary for the proof of Lemma~\ref{UpsDef} below. 

\begin{lemma}\label{opensupp a} Let $a\in \cpair$ and $S_a$ be the
open support of $a$.  Then $a$ is a homogeneous normalizer if and only
if $S_a$ is a bisection in $c\inv(t)$ for some $t$ in $\Gamma$.
\end{lemma}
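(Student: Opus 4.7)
My plan is to prove both implications separately. Throughout I identify $a$ with its image in $C_0(\Sigma;G)$ via Proposition~\ref{r4.2}, so $S_a\subseteq G$ is open and the pointwise convolution and adjoint formulas in~\eqref{hprop} are available.

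For the forward implication, suppose $a\in N_h$ has grade $t$. Lemma~\ref{hatgammaact} together with Remark~\ref{hatgammaact1} give $\langle\omega,c_G(\dot\gamma)\rangle a(\dot\gamma)=\langle\omega,t\rangle a(\dot\gamma)$ for every $\omega\in\hat\Gamma$, which forces $c_G(\dot\gamma)=t$ wherever $a(\dot\gamma)\neq 0$; hence $S_a\subseteq c_G^{-1}(t)$. To see that $S_a$ is a bisection, apply Proposition~\ref{prop def alpha} to obtain the partial homeomorphism $\alpha_a\colon\dom(a)\to\ran(a)$. Expanding $(a^*da)(x)$ for $x\in\go$ via~\eqref{hprop} yields
\[
(a^*da)(x)=\sum_{\dot\mu\in S_a\cap s^{-1}(x)}|a(\dot\mu)|^2\, d(r(\dot\mu)),
\]
while Proposition~\ref{prop def alpha} simultaneously gives $(a^*da)(x)=d(\alpha_a(x))(a^*a)(x)$. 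Running $d$ over $C_0(\go)$ promotes this equality to the measure identity $\sum|a(\dot\mu)|^2\delta_{r(\dot\mu)}=(a^*a)(x)\,\delta_{\alpha_a(x)}$, so every $\dot\mu\in S_a\cap s^{-1}(x)$ satisfies $r(\dot\mu)=\alpha_a(x)$. Now assume for contradiction that $\dot\mu_1\neq\dot\mu_2\in S_a\cap s^{-1}(x)$, and pick disjoint open bisections $B_i\subseteq S_a$ with $\dot\mu_i\in B_i$ and (after shrinking) $s(B_1)=s(B_2)=V$. By the range-constancy just established, the local sections $\sigma_i=(s|_{B_i})^{-1}$ satisfy $r\circ\sigma_1=r\circ\sigma_2=\alpha_a|_V$, so $B_1B_2^{-1}$ is an open bisection of $G$ with source and range both $\alpha_a(V)$, contained in $R=c_G^{-1}(0)$, and hence in the isotropy $R'$. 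Effectiveness of $R$ forces $B_1B_2^{-1}\subseteq\go$, so $\dot\mu_1\dot\mu_2^{-1}\in\go$ and $\dot\mu_1=\dot\mu_2$, contradicting disjointness of $B_1$ and $B_2$. Thus $s|_{S_a}$ is injective, and since every $\dot\mu\in S_a$ satisfies $r(\dot\mu)=\alpha_a(s(\dot\mu))$ with $\alpha_a$ injective, $r|_{S_a}$ is automatically injective; combined with $s$ and $r$ being local homeomorphisms, $S_a$ is an open bisection.

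For the converse, suppose $S_a$ is an open bisection contained in $c_G^{-1}(t)$. Since $c_G\equiv t$ on $S_a$ and $a$ vanishes elsewhere, Remark~\ref{hatgammaact1} gives $\omega\cdot a=\langle\omega,t\rangle a$, and Lemma~\ref{At prop}(1) places $a$ in $A_t$. To verify the normalizer property, expand
\[
(a^*da)(\dot\gamma)=\sum_{s(\dot\mu)=r(\dot\gamma)}\overline{a(\dot\mu)}\, d(r(\dot\mu))\, a(\dot\mu\dot\gamma)
\]
via~\eqref{hprop}. A nonzero contribution requires both $\dot\mu\in S_a$ and $\dot\mu\dot\gamma\in S_a$; writing $\dot\mu'=\dot\mu\dot\gamma$, one has $\dot\gamma=\dot\mu^{-1}\dot\mu'$, whose composability demands $r(\dot\mu)=r(\dot\mu')$, and injectivity of $r|_{S_a}$ then forces $\dot\mu=\dot\mu'$, whence $\dot\gamma\in\go$. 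Thus $(a^*da)$ vanishes off $\go$, i.e., $a^*da\in D$, and the symmetric calculation gives $ada^*\in D$, so $a$ is a normalizer.

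The main obstacle is the forward direction's promotion of the scalar normalizer identity from Proposition~\ref{prop def alpha} to the measure-concentration statement $r(\dot\mu)=\alpha_a(x)$ for every $\dot\mu\in S_a\cap s^{-1}(x)$. Once that range-constancy is in hand, the local bisection construction together with effectiveness of $R$ finishes the proof cleanly.
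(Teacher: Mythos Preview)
Your proof is correct, but the paper's argument for the forward direction is considerably shorter. The paper simply cites \cite[Proposition~4.7]{Ren08} to obtain $S_a^{-1}S_a\subseteq\Iso(G)$ directly from the normalizer condition; combined with $S_a\subseteq c^{-1}(t)$ this puts the open set $S_a^{-1}S_a$ inside $\Iso(G)\cap R$, and effectiveness of $R$ immediately yields $S_a^{-1}S_a\subseteq\go$, hence $S_a$ is a bisection. Your route instead unpacks Renault's result: you first extract the range-constancy $r(\dot\mu)=\alpha_a(s(\dot\mu))$ from the scalar identity in Proposition~\ref{prop def alpha} via a measure-concentration argument, and only then invoke effectiveness on a locally constructed $B_1B_2^{-1}$. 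This is more self-contained and makes the role of $\alpha_a$ explicit, at the cost of extra steps. You also supply an explicit converse (via the convolution expansion and injectivity of $r|_{S_a}$), which the paper leaves implicit.
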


\begin{proof} An element $a\in C^*_r(\Sigma;G)$
  is homogeneous of degree $t$ if and only if
  \begin{align*}
    a&=\int_{\hat\Gamma} \omega\cdot a\overline{\<\omega,t\>}d\omega\\
    &\Leftrightarrow
 a(\gamma)=\int_{\hat\Gamma}
\<\omega,c(\gamma)\>)a(\gamma)\overline{\<\omega,t\>}d\omega=a(\gamma)\int_{\hat\Gamma} 
      \<\omega,c(\gamma)\>\overline{\<\omega,t\>}d\omega\\
  &  \Leftrightarrow t=c(\gamma) \text{ for all $\gamma\in S_a$.}
  \end{align*}
  Thus $a\in A_t$ if
and only if $S_a\subseteq c\inv(t)$.

By the argument in \cite[Proposition~4.7]{Ren08}, if $a$ is a
normalizer, then $S_a\inv S^{}_a\subseteq \text{Iso}(G)$.  Thus, when $a$ is a
homogeneous normalizer, $S_a\subset c\inv(t)$ for some $t$, whence
$S_a\inv S_a\subseteq \text{Iso}(G)\cap R$.  As $R$ is effective  and $S_a\inv S_a$ is open, we obtain $S_a\inv S_a\subseteq \go$.
This implies that $S_a$ is a bisection.
\end{proof}

\section{Analysis of $\cpair$}\label{RD}

Throughout this section we fix a $\Gamma$-graded twist
$(\Sigma,G,\Gamma)$ satisfying Assumptions~\ref{CoTAmpt}.  Let
$(A,D)$ be the $\Gamma$-Cartan pair constructed from
$(\Sigma,G,\Gamma)$ in Proposition~\ref{prop53}.   An application of 
Theorem~\ref{mainthm1} to $(A,D)$ yields another $\Gamma$-graded twist
$(\Sigma_1, G_1, \Gamma)$ also satisfying Assumptions~\ref{CoTAmpt}.  Our goal is to show that
$(\Sigma_1,G_1,\Gamma)$ and $(\Sigma,G,\Gamma)$ are isomorphic in the
sense that there are topological groupoid isomorphisms
$\Upsilon_\Sigma: \Sigma\rightarrow \Sigma_1$ and
$\Upsilon_G: G\rightarrow G_1$ such that the diagram,
\begin{equation} \label{iso}\xymatrix{ 
    \bbT\times \unit{G}\ar[r]^{\iota}\ar[dd]_{\text{id}\times \Upsilon|_{\unit{G}}}& \Sigma \ar[dd]_{\Upsilon_\Sigma}\ar[rr]^q\ar[dr]_{c_\Sigma} && G\ar[dl]^{c_G}\ar[dd]^{\Upsilon_G}\\
    && \Gamma&\\
  \bbT\times \unit{G_1}\ar[r]_{\iota_1}&\Sigma_1\ar[rr]_{q_1}\ar[ur]^{c_{\Sigma_1}}&& G_1\ar[ul]_{c_{G_1}}\\
}
\end{equation}
commutes.

Throughout, we will use the notation established in Section~\ref{RI}
for 
$(A,D)$: thus $X=\unit{G}$, $D=C(X)$, $\Delta=E\circ \Phi_0$,
etc.  Further, notice that for $a\in
C_c(\Sigma;G)$, $\Delta(a)$ is nothing more than $a|_X$.  Thus, for
every $a\in A$, 
$\Delta(a)(x)=\eps_x(a)$.  
Lastly, recall from Section~\ref{RI} that $\sE=\{\psi_{(n,x)}:
(n,x)\in\sG\}$ is a family of linear functionals on $A$ which  becomes a
topological groupoid when equipped with the weak-$*$ topology,
 product $\psi_{(m,\alpha_n(x))}\psi_{(n,x)}=\psi_{(mn,x)}$,
and inverse $\psi_{(n,x)}^{-1}=\psi_{(n^*,\alpha_n(x))}$.    From Section~\ref{RI} we have $\Sigma_1=\sE$ and $G_1= |\sE|$. 

To begin, for $\gamma\in \Sigma$,  consider the linear functional
$\eps_\gamma$ on $A$ determined by Proposition~\ref{r4.2},
described as follows.  For $a\in
A$, there is a unique scalar $\eps_\gamma(a)$ such that
$j(a)(\dot\gamma)\in L$ is represented by $(\eps_\gamma(a),\gamma)\in
\bbT\times \Sigma$, that is,
\[j(a)(\dot\gamma)=[\eps_\gamma(a),\gamma].\]  Alternatively, if $a$ is
viewed as a covariant function on $\Sigma$,
\[\eps_\gamma(a)=a(\gamma).\]  Note that $\eps_\gamma$ is a norm-one
linear functional on $A$.

\begin{lemma}\label{UpsDef} The map $\Upsilon_\Sigma:
  \Sigma\rightarrow \dual{A}$ given by $\gamma\mapsto \eps_\gamma$
  is a homeomorphism of $\Sigma$  onto $\sE$.  Furthermore,
  $\Upsilon_\Sigma$ is an isomorphism of topological groupoids. 
\end{lemma}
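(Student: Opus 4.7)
The plan is to exhibit, for each $\gamma \in \Sigma$, an explicit homogeneous normalizer $n_\gamma \in N_h(A,D)$ such that $\eps_\gamma = \psi_{(n_\gamma, s(\dot\gamma))}$; once this holds, bijectivity, bicontinuity, and preservation of groupoid operations all fall out of direct computation with the line-bundle formulas. To construct $n_\gamma$, fix $\gamma \in \Sigma$, set $\dot\gamma = q(\gamma)$ and $x = s(\dot\gamma)$, and invoke Definition~\ref{twistdef}(5) together with continuity of $c_G$ and discreteness of $\Gamma$ to choose an open bisection $B \ni \dot\gamma$ and a continuous section $\phi_B : B \to \Sigma$ with $q\circ\phi_B = \id_B$, $\phi_B(\dot\gamma)=\gamma$, and $c_G|_B \equiv t$ for some $t \in \Gamma$. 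Pick $f \in C_c(G)$ supported in $B$ with $f(\dot\gamma)=1$ and define $n_\gamma(\dot\eta) = [f(\dot\eta),\phi_B(\dot\eta)]$ on $B$, extended by zero. The open support of $n_\gamma$ is a bisection inside $c_G^{-1}(t)$, so Lemma~\ref{opensupp a} gives $n_\gamma \in N_h$.

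To verify $\eps_\gamma = \psi_{(n_\gamma, x)}$, I would compute $\Delta(n_\gamma^* a)(x)$ using the convolution formula from Proposition~\ref{r4.2}. Because $\supp(n_\gamma^*) \subseteq B^{-1}$ is a bisection and $r|_{B^{-1}}$ is injective, only the term $\dot\eta = \dot\gamma^{-1}$ contributes to the sum at $x$; keeping track of the line-bundle identification $L|_x\cong \bbC$ together with $n_\gamma(\gamma)=1$ then yields $\psi_{(n_\gamma,x)}(a) = a(\gamma) = \eps_\gamma(a)$ for every $a \in A$, so $\Upsilon_\Sigma$ lands in $\sE$. The same calculation establishes surjectivity: for $\psi_{(n,x)} \in \sE$ the unique $\dot\gamma \in Z(n)$ with $s(\dot\gamma)=x$ together with the unique $\gamma \in q^{-1}(\dot\gamma)$ satisfying $n(\gamma)>0$ (viewing $n$ as a covariant function) gives $\psi_{(n,x)} = \eps_\gamma$. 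Injectivity follows since compactly supported continuous sections of the line bundle separate points of $G$ and the covariance $a(z\cdot\gamma) = \overline{z}\, a(\gamma)$ forces agreement of the $\bbT$-coordinate.

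Weak-$*$ continuity of $\Upsilon_\Sigma$ is immediate from continuity of each $a\in C_c(\Sigma;G)$, extended to all $a \in A$ by density and the uniform bound $\norm{\eps_\gamma}\leq 1$. For continuity of the inverse, I would use the local trivialization $\bbT\times B \ni (z,\dot\eta) \mapsto z\cdot \phi_B(\dot\eta)$: if $\eps_{\gamma_i}\to \eps_\gamma$ in $\sE$, then continuity of the source map supplies convergence of the $G$-coordinate via injectivity of $s|_B$, and evaluation against the single normalizer $n_\gamma$ pins down the $\bbT$-coordinate through $\eps_{\gamma_i}(n_\gamma)=n_\gamma(\gamma_i)\to 1$. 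Groupoid compatibility is then a computation: because the supports of the $n_{\gamma_i}$ are bisections, the convolution $n_{\gamma_1}*n_{\gamma_2}$ takes the value $1$ at $\gamma_1\gamma_2$, so $\eps_{\gamma_1}\eps_{\gamma_2} = \psi_{(n_{\gamma_1}n_{\gamma_2},s(\gamma_2))} = \eps_{\gamma_1\gamma_2}$, and the analogous argument using $n_\gamma^*$ handles inversion while units go to units by construction. The one genuine difficulty will be the line-bundle bookkeeping in the identification $\eps_\gamma = \psi_{(n_\gamma,x)}$: the equivalences $[\lambda,z\cdot\gamma]=[z\lambda,\gamma]$ must be threaded carefully through the convolution and the normalization $|n_\gamma|(x) = |n_\gamma(\gamma)|$, and every subsequent assertion depends on this formula being exactly right.
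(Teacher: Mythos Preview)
Your proposal is correct and follows essentially the same route as the paper: identify $\eps_\gamma$ with $\psi_{(n,x)}$ for any homogeneous normalizer $n$ with $n(\gamma)>0$, then read off bijectivity, continuity, and compatibility with the groupoid operations. The only noteworthy difference is presentational: you build a specific $n_\gamma$ from a local section $\phi_B$, whereas the paper simply picks any $n\in N_h$ with $\eps_\gamma(n)>0$ (such $n$ exist because $\spn N_h$ is dense and $\eps_\gamma\neq 0$) and invokes Lemma~\ref{opensupp a} to see it is supported on a bisection; and for bicontinuity the paper gives the single chain of equivalences $\gamma_i\to\gamma \Leftrightarrow a(\gamma_i)\to a(\gamma)$ for all $a\in C_c(\Sigma;G)\Leftrightarrow \eps_{\gamma_i}\to\eps_\gamma$ weak-$*$, which handles both directions at once and avoids your separate local-trivialization argument for the inverse.
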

\begin{proof}
  Fix $\gamma\in \Sigma$, put $x:=s(\gamma)$ and choose $n\in N_h$
  such that $\eps_\gamma(n)>0$.  By Lemma~\ref{opensupp a}, $n$ is supported
  on a homogeneous bisection, whence $\dot \gamma$ is the unique element of
  $\supp n$ whose source is $x$.  Thus 
  $(n^*n)(x)=\sum_{\sigma_1\sigma_2=x}
  \overline{n(\sigma_2)}n(\sigma_2)=\overline{n(\gamma)} n(\gamma)>0$,
  so $(n,x)\in \sG$.  To show  $\eps_\gamma=\psi_{(n,x)}$, it suffices to
  show $\eps_\gamma(m)=\psi_{(n,x)}(m)$ for every $m\in N_h$.   Choosing
  $m\in N_h$, we have $\Delta(n^*m)(x)=(n^*m)(x)=
\sum_{\dot
        \sigma\in Gx} \overline{n(\sigma)}m(\sigma)$.  As the terms in
      this sum are zero unless $\sigma\in \supp n$, and $\supp n\cap
      Gx=\{\dot\gamma\}$, we have  
  \[\psi_{(n,x)}(m)=\frac{\Delta(n^*m)(x)}{|n|(x)}
=\frac{\overline{n(\gamma)}m(\gamma)}{|n(\gamma)|}=\eps_\gamma(m)\]
because $n(\gamma)>0$.  Thus $\eps_\gamma=\psi_{(n,x)}$, as desired.  

Now suppose $(n,x)\in\sG$.   Since $n$ is supported on an open
bisection by Lemma~\ref{opensupp a}, there is a  unique element
of $\supp n$ whose source is  $x$.  Therefore, there is a unique
element  $\gamma\in\Sigma$ satisfying $s(\gamma)=x$ and
$\eps_\gamma(n)>0$.   The argument of the previous paragraph shows
$\psi_{(n,x)}=\eps_\gamma$.    We have thus shown that
$\Upsilon_\Sigma(\Sigma) = \sE$.  Notice that our work also shows that
$\Upsilon_\Sigma$ is bijective.  

Recall that $G$ is \'etale, $C_c(\Sigma;G)$ is dense in $A$,
and elements of $\sE$ are norm one linear functionals on $A$.  So if
$(\gamma_i)$ is a net in  $\Sigma$ and
$\gamma\in\Sigma$,
\begin{align*}
    \gamma_i\rightarrow \gamma
    &\Leftrightarrow \text{ for every $a\in C_c(\Sigma;G)$, $\eps_{\gamma_i}(a)\rightarrow
      \eps_\gamma(a)$}\\
      &\Leftrightarrow\text{  for every $a\in A$, $\eps_{\gamma_i}(a)\rightarrow
        \eps_\gamma(a)$ }\\
        &\Leftrightarrow \text{ $(\eps_{\gamma_i})$ converges weak-$*$ to
          $\eps_\gamma$.}
\end{align*}
Thus $\Upsilon_\Sigma$
is a homeomorphism.

We now observe that $\Upsilon_\Sigma$ preserves the groupoid
operations.  First, suppose $\gamma\in\Gamma$ and $n\in N_h$ is such
that $\eps_\gamma(n)>0$.  Then $\eps_\gamma=\psi_{(n,s(\gamma))}$.  For
$d\in D$, we have $\eps_\gamma(dn)=d(r(\gamma)) n(\gamma)$.  On the
other hand,
$\psi_{(n,s(\gamma))}(dn) =d(\alpha_n(s(\gamma))
\sqrt{(n^*n)(s(\gamma))}=d(\alpha_n(s(\gamma))) |n(\gamma)|$.  But as
this holds for every $d\in D$ and $n(\gamma)>0$, we conclude
that \begin{equation}\label{okunit}
  r(\gamma)=\alpha_n(s(\gamma)).
\end{equation}

Suppose the product of $\gamma_1, \gamma_2\in \Sigma$ is
defined.  For $i=1,2$, choose  $n_i\in N_h$ so that
$\eps_{\gamma_i}(n_i)>0$.  As $n_i$ are supported in open
bisections of $G$ (Lemma~\ref{opensupp a}), 
\[\eps_{\gamma_1\gamma_2}(n_1n_2)=(n_1n_2)(\gamma_1\gamma_2)=
  n_1(\gamma_1)n_2(\gamma_2)=\eps_{\gamma_1}(n_1)\eps_{\gamma_2}(n_2)>0.\]
We therefore obtain 
\[\Upsilon_\Sigma(\gamma_1\gamma_2)=
  \psi_{(n_1n_2,s(\gamma_2))}\overset{\eqref{okunit}}=\psi_{(n_1,r(\gamma_2))}\psi_{(n_2,s(\gamma_2))}=\Upsilon_\Sigma(\gamma_1)\Upsilon_\Sigma(\gamma_2).\]
For $\gamma\in\Sigma$ and $n\in N_h$ such that $\eps_\gamma(n)>0$, we
have
$\eps_{\gamma^{-1}}(n^*)=n^*(\gamma^{-1})=\overline{n(\gamma)}>0$, so
$\eps_{\gamma^{-1}}=\psi_{(n^*,r(\gamma))}$.  As
$r(\psi_{(n,s(\gamma))})=\alpha_n(s(\gamma))=$
$r(\gamma)=\alpha_n(s(\gamma))$, we obtain
\[\Upsilon_\Sigma(\gamma^{-1})=(\Upsilon_\Sigma(\gamma))^{-1}.\] 

Finally, suppose $z\in \bbT$ and $\gamma\in \Sigma$.  Choose $n\in N$
so that $\eps_\gamma(n)>0$.  Then $\eps_{z\cdot
  \gamma}(zn)=(zn)(z\cdot
\gamma)=\overline{z}(zn)(\gamma)=n(\gamma)>0$.  Thus,
$\eps_{z\cdot\gamma}=\psi_{(zn,s(\gamma))}$.  So by Remark~\ref{2cdot},
\[\Upsilon_\Sigma(z\cdot\gamma)=z\cdot\Upsilon_\Sigma(\gamma).\qedhere\]
\end{proof}

Writing $\Sigma_1:=\sE$ and $G_1:=|\sE|$, we thus have defined the two
left vertical arrows in~\eqref{iso}.  It follows that if $\dot\gamma\in G$, then for
$\sigma_1, \sigma_2\in q^{-1}(\dot\gamma)$,
$q_2(\Upsilon_\Sigma(\sigma_1))=q_2(\Upsilon_\Sigma(\sigma_2))$.
Therefore the map $\Upsilon_G:G\rightarrow G_1$ given by
\[\Upsilon_G(\dot\gamma):=q_1(\Upsilon_\Sigma(\gamma))\] is a 
well-defined isomorphism of groupoids.   That $\Upsilon_G$ is a
homeomorphism follows from the fact that $\Sigma\rightarrow G$ and
$\Sigma_1\rightarrow G_1$ are
locally trivial and $\Upsilon_\Sigma$
is a homeomorphism (or use the fact that $q$, $q_1$ are quotient maps
and $\Upsilon_\Sigma$ is a homeomorphism).

Now suppose $\dot\gamma\in G$ and
$c_G(\dot\gamma)=t$.  Then for $\gamma\in q^{-1}(\dot\gamma)$,
$c_\Sigma(\gamma)=t$.   Choosing $n\in N_h$ with $\eps_\gamma(n)>0$,
we obtain $\supp n\subseteq c^{-1}(t)$.   So by~\eqref{grdef} we
obtain
$c_{\Sigma_1}(\psi_{(n,s(\gamma))})=c_{G_1}(|\psi_{(n,s(\gamma))}|)=t$.
It follows that~\eqref{iso} commutes.  Thus we have proved the
following theorem.

\begin{thm}\label{mainthm2} Let $\Sigma\to G$ be a $\Gamma$-graded
  twist satisfying Assumptions~\ref{CoTAmpt}.  Let
$\Sigma_1\to G_1$ be the twist constructed from $(C_r^*(\Sigma;G),C_0(\go))$.
For each $\gamma\in \Sigma$, choose a homogeneous normalizer $n\in C^*_r(\Sigma,G)$ such
that $n(\gamma)>0$.  Then the map
\[ \Upsilon_\Sigma: \Sigma\to \Sigma_1\dstext{given by} \gamma\mapsto
[n ,s(\gamma)]_{\Sigma_1}
\] descends to a well-defined isomorphism of twists such that the
diagram~\eqref{iso} commutes.

\end{thm}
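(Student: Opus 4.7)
The plan is to construct $\Upsilon_\Sigma$ in analytic terms and identify its image with $\Sigma_1 = \mathcal{E}$, then descend to $\Upsilon_G$ and verify cocycle compatibility. Set $A := C^*_r(\Sigma;G)$. Using Proposition~\ref{r4.2} to view $A$ as covariant functions on $\Sigma$, each $\gamma \in \Sigma$ gives a norm-one linear functional $\eps_\gamma(a) := a(\gamma)$ on $A$, and I set $\Upsilon_\Sigma(\gamma) := \eps_\gamma$. The theorem's candidate formula $\Upsilon_\Sigma(\gamma) = [n, s(\gamma)]_{\Sigma_1}$ for $n \in N_h$ with $n(\gamma) > 0$ then amounts to the identity $\eps_\gamma = \psi_{(n, s(\gamma))}$ in $\mathcal{E}$.

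To establish that identity, I first need such an $n$ to exist; local triviality of $\Sigma \to G$ lets me build $n$ as a bump section on a small open bisection of $c_G^{-1}(c_\Sigma(\gamma))$, and such a section is a homogeneous normalizer by Lemma~\ref{opensupp a}. For this $n$, Lemma~\ref{opensupp a} guarantees $\supp n$ is a bisection, so $\dot\gamma$ is the unique element of $\supp n$ with source $s(\gamma)$; the convolution sum defining $\Delta(n^*m)(s(\gamma))$ therefore collapses to $\overline{n(\gamma)}m(\gamma)$, and dividing by $|n|(s(\gamma)) = |n(\gamma)|$ yields $\eps_\gamma(m) = \psi_{(n,s(\gamma))}(m)$ on the norm-dense subset $\spn N_h \subseteq A$. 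Reversing the argument---given $(n,x) \in \mathcal{G}$, take $\gamma$ to be the unique point of $\supp n$ lying over $x$---shows $\Upsilon_\Sigma$ is a bijection onto $\mathcal{E} = \Sigma_1$.

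With the identification $\eps_\gamma = \psi_{(n,s(\gamma))}$ in hand, the remaining structure follows routinely. Continuity of $\Upsilon_\Sigma$ and of its inverse reduces to pointwise convergence against $C_c(\Sigma;G)$, which is norm-dense in $A$ and on which the topologies agree because elements of $\mathcal{E}$ are norm-one. Preservation of composition, inversion, and the $\bbT$-action is verified by choosing adapted homogeneous normalizers $n_i$ with $n_i(\gamma_i) > 0$ and computing $(n_1n_2)(\gamma_1\gamma_2) = n_1(\gamma_1)n_2(\gamma_2) > 0$, together with $n^*(\gamma^{-1}) = \overline{n(\gamma)}$ and $(zn)(z\cdot\gamma) = n(\gamma)$; Remark~\ref{2cdot} reconciles the sign when matching the two candidate $\bbT$-actions on $\Sigma_1$. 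Because $\Upsilon_\Sigma$ is $\bbT$-equivariant, it descends to a bijection $\Upsilon_G: G \to G_1$ via the quotient maps $q, q_1$, and local triviality of both line bundles makes $\Upsilon_G$ a homeomorphism. Commutativity of~\eqref{iso} then reduces to $c_{\Sigma_1}(\psi_{(n,x)}) = \gr(n) = c_\Sigma(\gamma)$, which is immediate from~\eqref{grdef} once we know $\supp n \subseteq c_G^{-1}(c_\Sigma(\gamma))$.

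The chief obstacle is the identification $\eps_\gamma = \psi_{(n,s(\gamma))}$: it requires simultaneously Proposition~\ref{r4.2} (to evaluate elements of the reduced $C^*$-algebra at a point of $\Sigma$), Lemma~\ref{opensupp a} (so that supports of homogeneous normalizers are bisections lying in a single cocycle level, which collapses the convolution sum to one term), and the supply of bump normalizers produced from local triviality. Once that identification is secured the remainder is essentially bookkeeping, carried out by choosing a convenient homogeneous normalizer adapted to each $\gamma$ under consideration.
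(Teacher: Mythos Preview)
Your proposal is correct and follows essentially the same approach as the paper: the paper likewise defines $\Upsilon_\Sigma(\gamma)=\eps_\gamma$, uses Lemma~\ref{opensupp a} to collapse the convolution for $\Delta(n^*m)(s(\gamma))$ and identify $\eps_\gamma=\psi_{(n,s(\gamma))}$, establishes bijectivity by reversing the argument, checks bicontinuity via density of $C_c(\Sigma;G)$ and the norm-one property, and then verifies preservation of the groupoid operations and the $\bbT$-action before descending to $\Upsilon_G$ and reading off cocycle compatibility from~\eqref{grdef}. One minor imprecision: when you ``reverse the argument'' you should specify that $\gamma$ is the unique element of $\Sigma$ (not $G$) with $s(\gamma)=x$ and $n(\gamma)>0$, using covariance to single it out within the fiber $q^{-1}(\dot\gamma)$.
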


\begin{cor} Suppose $\Sigma\to G$ and $\Sigma'\to G'$ are
  $\Gamma$-graded twists satisfying Assumptions~\ref{CoTAmpt}.
  Suppose further that
\[\Xi: C_r^*(\Sigma;G)\to C_r^*(\Sigma';G')\] is an isomorphism of
$C^*$-algebras such that
\begin{enumerate}
\item $\Xi$ is equivariant for the induced $\hat\Gamma$ actions; and 
\item $\Xi|_{C_0(\go)}: C_0(\go)\to C_0({G'}^{(0)})$ is an
isomorphism.
\end{enumerate} Then there exists groupoid isomorphisms
$\upsilon_\Sigma, \upsilon_G$ such that the following diagram
commutes.

\begin{equation*}\xymatrix{
 \bbT\times\unit{G}\ar[r]^\iota \ar[dd]_{\text{id}\times v_G|_{\unit{G}}}   & \Sigma
\ar[rr]^q\ar[dr]^{c_{_\Sigma}}\ar[dd]_{\upsilon_\Sigma} & &
G\ar[dl]_{c_{_G}}\ar[dd]^{\upsilon_G} \\
& & \Gamma& \\
\bbT\times\unit{G'}\ar[r]_{\iota'}&\Sigma'\ar[ur]_{c'_\Sigma}\ar[rr]_{q'} & & G'\ar[ul]^{c'_G} }
\end{equation*}
\end{cor}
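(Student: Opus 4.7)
The plan is to apply Theorem~\ref{mainthm2} to each of the two $\Gamma$-graded twists. That theorem produces twists $\Sigma_1 \to G_1$ and $\Sigma'_1 \to G'_1$, reconstructed from the $\Gamma$-Cartan pairs $(C^*_r(\Sigma;G), C_0(\unit G))$ and $(C^*_r(\Sigma';G'), C_0(\unit{G'}))$, together with isomorphisms of graded twists $\Upsilon_\Sigma : \Sigma \to \Sigma_1$, $\Upsilon_G : G \to G_1$ and $\Upsilon'_\Sigma : \Sigma' \to \Sigma'_1$, $\Upsilon'_G : G' \to G'_1$ fitting into diagrams of the form~\eqref{iso}. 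I will build directly from $\Xi$ an isomorphism of graded twists $\tilde\upsilon_\Sigma : \Sigma_1 \to \Sigma'_1$ and $\tilde\upsilon_G : G_1 \to G'_1$, and finally set $\upsilon_\Sigma := (\Upsilon'_\Sigma)^{-1} \circ \tilde\upsilon_\Sigma \circ \Upsilon_\Sigma$ and $\upsilon_G := (\Upsilon'_G)^{-1} \circ \tilde\upsilon_G \circ \Upsilon_G$.

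To construct $\tilde\upsilon_\Sigma$, I use the functional description $\Sigma_1 = \sE$ and $G_1 = |\sE|$ from Section~\ref{RI}. The hypotheses on $\Xi$ transport every piece of structural data needed there. Equivariance gives $\Xi \circ \Phi_0 = \Phi'_0 \circ \Xi$. Since $\Xi(D) = D'$, uniqueness of the Cartan conditional expectation on the fixed-point algebra forces $\Xi \circ E = E' \circ \Xi$ on $A_0$; composing, $\Xi \circ \Delta = \Delta' \circ \Xi$. Equivariance also yields $\Xi(A_t) = A'_t$, so $\Xi$ carries $N_h(A,D)$ onto $N_h(A',D')$. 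Writing $\Xi_* : \hat{D'} \to \hat D$ for the dual of $\Xi|_D$, a direct computation using $\Xi \circ \Delta = \Delta' \circ \Xi$ yields
\[ \psi_{(n,x)} \circ \Xi^{-1} = \psi_{(\Xi(n),\, \Xi_*^{-1}(x))}. \]
Hence the map $\psi \mapsto \psi \circ \Xi^{-1}$ sends $\sE$ bijectively onto $\sE'$ and descends to a bijection $|\sE| \to |\sE'|$. These are $\tilde\upsilon_\Sigma$ and $\tilde\upsilon_G$.

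That $\tilde\upsilon_\Sigma$ and $\tilde\upsilon_G$ are isomorphisms of graded twists is essentially formal. Precomposition with $\Xi^{-1}$ is weak-$*$ continuous, so both maps are homeomorphisms. The groupoid operations on $\Sigma_1, \Sigma'_1$ are determined by the equivalence relation $\sim_\Sigma$ of Definition~\ref{equivrelR}, which is preserved because $\Xi$ respects $D$ and sends $N_h$ to $N'_h$. The $\bbT$-action is scalar multiplication of linear functionals (Remark~\ref{2cdot}), manifestly preserved by precomposition. The cocycles $c_{\Sigma_1}, c_{G_1}$ were defined in~\eqref{grdef} via the grading $\gr(n) = t$ iff $n \in A_t$, and this is preserved because $\Xi(A_t) = A'_t$.

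The remaining task is to unwind the definitions and confirm that the composite $\upsilon_\Sigma$ (and analogously $\upsilon_G$) satisfies the diagram of the corollary. Commutativity with the cocycles is automatic, since each of the three maps in the composition intertwines its respective grading cocycles (Theorem~\ref{mainthm2} for $\Upsilon_\Sigma$ and $\Upsilon'_\Sigma$; the observation just made for $\tilde\upsilon_\Sigma$). The square involving $\iota, \iota'$ reduces, via the canonical identifications $\hat D \cong \unit G$ and $\hat{D'} \cong \unit{G'}$ built into $\Upsilon_\Sigma, \Upsilon'_\Sigma$, to the statement that $\upsilon_G|_{\unit G}$ is the Gelfand dual of $(\Xi|_D)^{-1}$. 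Tracking these spectrum identifications carefully is the principal (though still routine) bookkeeping of the proof and the main place where one must be deliberate.
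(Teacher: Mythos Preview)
Your proof is correct and follows the same strategy as the paper: apply Theorem~\ref{mainthm2} on each side and compose with an isomorphism $\Sigma_1\cong\Sigma_1'$ induced by $\Xi$. The paper's proof is terser---it simply asserts that since $\Xi$ is equivariant and carries $C_0(\unit G)$ to $C_0(\unit{G'})$, ``by construction'' $\Sigma_1\cong\Sigma_1'$ and $G_1\cong G_1'$---whereas you actually spell out that construction via $\psi\mapsto\psi\circ\Xi^{-1}$ on $\sE$ and verify the needed compatibilities (with $\Delta$, the grading, the $\bbT$-action, and the unit-space identifications), which is exactly the content hidden behind the paper's phrase.
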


\begin{proof} By Theorem~\ref{mainthm2}, there are isomorphisms
$\Upsilon_\Sigma: \Sigma\to \Sigma_1$, $\Upsilon_G: G\to
G_1$, $\Upsilon_{\Sigma'}: \Sigma'\to \Sigma'_1$, $\Upsilon_{G'}: G'\to
G'_1$.  Since $\Xi$ is an equivariant isomorphism it takes
$\cpair^{\hat\Gamma}$ isomorphically onto $C^*_r(\Sigma';
G')^{\hat\Gamma}$.  Thus by construction, $\Sigma_1\cong \Sigma'_1$,
$G_1\cong G'_1$.  The result then follows from composition of
isomorphisms.
\end{proof}

\section{Examples}\label{ex}

\begin{example}Let $G$ be a finite discrete abelian group.  Take
$A=C^*(G)$.  Then 
\[C^*(G)=\spn\{\delta_g:g\in G\}.\] As pointed out in \cite{CRST}, if
$|G|=|H|$, then $C^*(G)\cong C^*(H)$.  So it is surprising that we would be
able to recover  $G$ using Theorem~\ref{mainthm2}.  However, as
this example illustrates, the induced action of $\hat\Gamma$ required
in Theorem~\ref{mainthm2} plays a crucial role.

Suppose $c: G\to \Gamma$ is a homomorphism of $G$ into a discrete
abelian group, with $c\inv(0)$ topologically principal.  Then
$c\inv(0)=\{0\}$, so $c$ is injective and $c(G)$ is isomorphic to
$G$ as a subgroup of $\Gamma$.  So $A_0=\C \delta_0$ and we consider
the inclusion $D=A_0\subseteq A$.

Notice that for $\omega\in \hat{G}$, $\omega\cdot \delta_g(h)=\innerprod{\omega,
c(h)}\delta_g(h)$ so that $\omega\cdot\delta_g=\<\omega,c(g)\>\delta_g$.
Thus, by Lemma~\ref{At prop}, $\delta_g\in A_{c(g)}$ and furthermore
$\delta_g \notin A_t$ for $t\neq c(g)$.  Since
$C^*(G)=\spn\{\delta_g:g\in G\}$ we have
\[ A_t=\begin{cases} \C \delta_g & t=c(g)\\ 0 &
\text{otherwise}.\end{cases}
\] Thus the homogeneous normalizers of $A_0$ are all of the form
$\lambda \delta_g$ for some $\lambda \in \C$.  Take $X=\{*\}=\hat
A_0$.  Now
\begin{align*} [\lambda
\delta_g,*]_\Sigma=[\lambda'\delta_{g'},*]_\Sigma &\Leftrightarrow
g=g' \text{ and } \overline{\lambda}\lambda'>0,\\ [\lambda
\delta_g,*]_G=[\lambda'\delta_{g'},*]_G &\Leftrightarrow g=g'.
\end{align*} So here $\Upsilon_G:g\mapsto [\delta_g,*]_G$ and
$\Upsilon_\Sigma (z,g)\mapsto [z\delta_g,*]_\Sigma$ where
$\Upsilon_\Sigma:\T\times G\to \Sigma_W$ are the desired isomorphisms
from Theorem~\ref{mainthm2}.
\end{example}

\begin{example}\label{THRG} Let $\Lambda$ be a $k$-graph. That is,
  $\Lambda$ is a small
category endowed with a functor, the \textit{degree map}, $d:\Lambda\to
\N^k$, that satisfies the following unique factorization property:  if $\lambda\in
\Lambda$ and $d(\lambda)=m+n$ there exists unique $\mu,\nu\in \Lambda$
such that $d(\mu)=m, d(\nu)=n$ and $\lambda=\mu\nu$.  We assume
$\Lambda$ has no sources in that for all objects $v$ and all $m\in
\N^k$ there exists $\mu$ with $r(\mu)=v$ and $d(\mu)=m$.

In \cite{KPS15}, Kumjian, Pask, and Sims introduce categorical
cohomology on a $k$-graph $\Lambda$.  In particular, they define a
$2$-cocycle with coefficients in $\T$ to be a function $\phi: \Lambda*
\Lambda\to \T$ such that
\[\phi(\lambda_1,\lambda_2)+\phi(\lambda_1\lambda_2,\lambda_3)=\phi(\lambda_2,\lambda_3)+\phi(\lambda_1,\lambda_2\lambda_3)\]
where $\Lambda*\Lambda:=\{(\mu,\nu):s(\mu)=r(\nu)\}$ and $\lambda_i$
are defined so that all of the compositions above make sense.  Denote
the set of these 2-cocycles by $Z_2(\Lambda,\T)$.  They prove in
\cite[Theorem~4.15]{KPS15}, that there is an isomorphism from the
second cubical cohomology group they defined in \cite{KPS12} to this
categorical cohomology group.

They define in \cite[Definition~5.2]{KPS15} the twisted $k$-graph
$C^*$-algebra by $\phi\in Z_2(\Lambda,\T)$ to be the universal
$C^*$-algebra $C^*(\Lambda,\phi)$ generated by elements $t_\mu$, $\mu \in \Lambda$ of a $C^*$-algebra satisfying the following.
\begin{enumerate}
\item The $t_v$ for $v\in d\inv(0)$ are mutually orthogonal projections,
\item $t_\mu t_\nu=\phi(\mu,\nu)t_{\mu\nu}$ whenever $s(\mu)=r(\nu)$,
\item $t^*_\lambda t^{}_\lambda=t_{s(\lambda)}$, and
\item\label{cuntz} for all $v \in d\inv(0)$ and $n \in \N^k$,  $\displaystyle t_v=\! \!\sum_{\substack{r(\lambda)=v  \\ d(\lambda) = n}} t^{}_\lambda t^*_\lambda$.
\end{enumerate} 

By the universal property of $C^*(\Lambda,\phi)$, $d:\Lambda\to \N^k$
induces an action of $\T^k$ on $C^*(\Lambda,\phi)$ characterized by
$z\cdot t_\mu^{} t_\nu^*=z^{d(\mu)-d(\nu)}t_\mu^{} t_\nu^*$.

Let
$$ C = \ol{\spn}\{t_\mu t_\nu^* \colon d(\mu) = d(\nu)\}, $$
and let
$$ D = \ol{\spn}\{t_\mu t_\mu^*\}. $$
By \cite[Lemma~7.4]{KPS15} $C$ is the fixed point algebra
$C^*(\Lambda,\phi)^{\T^k}$, for this action.  Moreover, as elements of
the generating set $\{t_\mu t_\nu^* \colon d(\mu) = d(\nu)\}$ are all
normalizers for $D$, to show $D$ is Cartan in $C$ it suffices to show
$D$ is maximal abelian and that there is a conditional expectation
from $C$ onto $D$.

The conditional expectation $P$ from $C$ onto $D$ is given by, for
$\mu,\nu \in \Lambda$ with $d(\mu) = d(\nu)$
$$ P(t_\mu t_\nu^*) = \delta_{\mu, \nu} t_\mu t_\nu^*. $$

The $\cs$-algebra $C$ is an AF-algebra.  This is shown in
\cite[Proposition 7.6]{KPS15}.  We recap and reframe some of those
details to show $D$ is Cartan in $C$.

For each $n\in \N^k$ let
$$ C_n = \ol{\spn} \{ t_\mu t_\nu^* \colon \mu, \nu \in \Lambda^n\}, $$
and let
$$ D_n = \ol{\spn} \{ t_\mu t_\mu^* \colon \mu \in \Lambda^n \}. $$
When $m\leq n$ we embed $C_m$ in $C_n$ using condition \eqref{cuntz}
in the definition of $C^*(\Lambda,\phi)$ above: if $\mu, \nu \in
\Lambda^m$ then
$$ t_\mu t_\nu^* = \sum_{\lambda \in \Lambda^{n-m}s(\mu)} t_\mu t_\lambda t_\lambda^* t_\nu^* \in C_n. $$
Note that this embedding also gives $D_m \subseteq D_n$.  We have then
that
$$ C = \ol{ \bigcup_{n\in \N^k} C_n}, $$
and
$$ D = \ol{ \bigcup_{n\in \N^k} D_n}. $$

For each $v \in \Lambda^0$ and $n\in \N^k$ denote by $K(\Lambda^nv)$
the compact operators on the Hilbert space $\ell^2(\Lambda^nv)$.
Using a matrix unit argument, it is observed in \cite[Proposition
7.6(1)]{KPS15} that
$$ C_n = \bigoplus_{v\in \Lambda^0} K(\Lambda^nv). $$ 
As $D_n$ is formed by the self-adjoint matrix units, $D_n$ is a
maximal abelian subalgebra of $C_n$.  Further there is a faithful
conditional expectation $P_n$ on $C_n$.  We can describe this
conditional expectation by
$$ P_n(a) = \sum_{\mu \in \Lambda^n} (t_\mu^{} t_\mu^*) a (t_\mu^{} t_\mu^*), $$
where the series convergences in the strong operator topology.  Using
this formula we can extend $P_n$ to all of $B(H)$.  A simple
calculation shows that $P_n(B(H)) = D_n'$, i.e., $P_n$ is a conditional
expectation onto $D_n'$.

We further note that the embeddings of $C_m$ into $C_n$ for $m\leq n$,
give $P_n|_{C_m} = P_m$.  The conditional expectation $P \colon C
\rightarrow D$ can then be described as the direct limit of the maps
$\{P_n\}$ (see e.g. \cite[Proposition A.8]{Rae}).

To show that $D$ is maximal abelian in $C$ we use an argument similar
to that found in \cite[Chapter~1]{StrVoic}.  Suppose $a\in D' \cap C$.
Since $a\in C$ there is a net $(a_n)$ with $a_n\in C_n$ such that
$$ \lim_n \| a_n - a\| = 0. $$
Further, since $a\in D'$, we have that $a \in D_n'$ for each $n\in
\N^k$, and thus $P_n(a) = a$ for each $n\in \N^k$.  Hence
$$ \|P_n(a_n) - a \| = \|P_n(a_n - a)\| \leq \|a_n - a\|. $$
And thus the net $(P_n(a_n))_n$ converges to $a$.  Since $P_n(a_n) \in
D_n$ it follows that $a\in D$, and therefore $D$ is maximal abelian in
$C$.

Thus $(C^*(\Lambda,\phi), D)$ is a $\mathbb{Z}^k$-Cartan pair.
Hence by Theorem~\ref{mainthm1} there exists a twist $\Sigma_W\to G_W$
such that $C^*(\Lambda,\phi)\cong C^*_r(\Sigma_W; G_W)$.  Notice that
here $\Sigma_W$ and $G_W$ consist of elements of the form $[s_\mu
s_\nu^*,x]$ with $\mu,\nu\in \Lambda$ and $x\in \Lambda^\infty$.

In \cite{KPS15}, the authors construct a groupoid $G_\Lambda$ and a
continuous cocycle $\varsigma$ such that $C^*(\Lambda,\phi)\cong
C^*_r(G_\Lambda,\varsigma)$ \cite[Theorem~6.7]{KPS15}.  By
Theorem~\ref{mainthm2}, $\Sigma_W\cong \T\times_\varsigma G_{\Lambda}$
and $G_W\cong G_\Lambda$, that is Theorem~\ref{mainthm1} recovers the
construction in \cite{KPS15}.  We provide some details of the
isomorphisms of twists given above, but to proceed we need to provide
a few details of the construction in \cite{KPS15}.

The groupoid construction in \cite{KPS15} is standard and goes back to
\cite{KP00, ren80, KPRR97}.  We say $\Lambda^\infty:=\{x:\N\times\N\to
\Lambda, x \text{ is a degree preserving functor}\}$ and
$\sigma^p:\Lambda^\infty\to \Lambda^\infty$ by
$\sigma^p(x)(m,n)=x(m+p,n+p)$.

\[G_\Lambda:=\{(x,\ell-m,y)\in\Lambda^\infty\times \Z^k\times
\Lambda^\infty: \ell,m\in \N^k, \sigma^\ell x=\sigma^m y\}\]

with the topology on $G_\Lambda$ given by basic open sets
\[ Z(\mu,\nu):=\{(\mu x, d(\mu)-d(\nu), \nu x): x\in \Lambda^\infty,
r(x)=s(\mu)=s(\nu)\}.
\]

It turns out that under our hypotheses each $Z(\mu,\nu)$ is compact
and open and so there exists a subset $\mathfrak{P}\subseteq
\Lambda\times \Lambda$ such that $\{Z(\mu,\nu):(\mu,\nu)\in
\mathfrak{P}\}$ is a partition of $G_\Lambda$.  Thus for each
$\gamma\in G_\Lambda$ there exists $(\mu_\gamma,\nu_\gamma)\in
\mathfrak{P}$ such that $\gamma\in Z(\mu_\gamma,\nu_\gamma)$.

Now by \cite[Lemma~6.3]{KPS15} for $\gamma,\eta\in G_\Lambda$
composable we can find $y\in \Lambda^\infty$, $\alpha,\beta,\zeta\in
\Lambda$ such that
\begin{align*}\gamma=(\mu_\gamma \alpha y,
&d(\mu_\gamma)-d(\nu_\gamma), \nu_\gamma\alpha y),
\quad\eta=(\mu_\eta\beta y, d(\mu_\eta)-d(\nu_\eta), \nu_\eta\beta y),
\\\gamma\eta&=(\mu_{\gamma\eta} \zeta
y,d(\mu_{\gamma\eta})-d(\nu_{\gamma\eta}), \nu_{\gamma\eta}\zeta y)
\end{align*} and
\[\varsigma_\phi(\gamma,\eta):=(\phi(\mu_\gamma,\alpha)-\phi(\nu_\gamma,\alpha)+(\phi(\mu_\eta,\beta)-\phi(\nu_\eta,\beta))-(\phi(\mu_{\gamma\eta},\zeta)-\phi(\nu_{\gamma\eta},\zeta))\]
is a well-defined continuous groupoid cocycle (see \cite{ren80} for
the definition).  We can then define
\[ \Sigma_{\Lambda,\phi}=\T\times_{\varsigma} G_\Lambda.
\] and then the twist is
\[ \T\times G_\Lambda^{(0)}\to \Sigma_{\Lambda,\phi}\to G_\Lambda.
\] Now the isomorphisms given in Theorem~\ref{mainthm2} are given by
\begin{align*} \Upsilon_\Sigma:& (z,(\mu_\gamma
x,d(\mu_\gamma)-d(\nu_\gamma),\nu_\gamma x))\mapsto
[s^{}_{\mu_\gamma}s_{\nu_\gamma}^*,\nu_\gamma x]_\Sigma\\ \Upsilon_G:&
(\mu_\gamma x,d(\mu_\gamma)-d(\nu_\gamma),\nu_\gamma x)\mapsto
[s^{}_{\mu_\gamma}s_{\nu_\gamma}^*,\nu_\gamma x]_G.
\end{align*}
\end{example}

\begin{example}\label{ex: BL}
A main result in \cite{BarlakLi} is
that any separable, unital, nuclear $\cs$-algebra which contains a
Cartan subalgebra satisfies the UCT. 
In fact, more is shown. 
It is shown in \cite[Theorem~3.1]{BarlakLi} that if $\Sigma \to G$ is a
twist, where $G$ is an \'etale Hausdorff locally compact second
countable groupoid where the reduced $\cs$-algebra $\cs_r(\Sigma;G)$ is
nuclear, then $\cs_r(\Sigma;G)$ satisfies the UCT. 
Hence we have the following corollary to Theorem~\ref{mainthm1} and
\cite[Theorem~3.1]{BarlakLi}.

\begin{cor} Let $A$ be a separable and nuclear $\cs$-algebra.
If $A$ contains an abelian subalgebra $D$ such that $(A,D)$ is a \gc\ pair for a some discrete abelian group $\Gamma$, then $A$ satisfies the UCT.
\end{cor}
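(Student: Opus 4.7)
The plan is to realize $A$ as a reduced twisted groupoid $\cs$-algebra and then invoke \cite[Theorem~3.1]{BarlakLi}. By Theorem~\ref{mainthm1}, there exists a $\Gamma$-graded twist $\T\times\go \hookrightarrow \Sigma \twoheadrightarrow G$ and a $\hat\Gamma$-equivariant $\ast$-isomorphism $A \cong C^*_r(\Sigma;G)$ with $D$ mapped onto $C_0(\go)$. By Proposition~\ref{G top1}, $G$ is an \'etale, Hausdorff, locally compact groupoid, and $C^*_r(\Sigma;G)$ is nuclear since $A$ is. To invoke \cite[Theorem~3.1]{BarlakLi}, the only remaining item to check is that $G$ is second countable.

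For this step, the key point is to use separability of $A$ to transfer second countability through the construction of $\Sigma$ and $G$. Because $A$ is separable, the closed unit ball of the Banach space dual $\dual{A}$ is weak-$\ast$ metrizable and compact (Banach--Alaoglu plus a standard argument), hence second countable. By Lemma~\ref{UpsDef}, $\Sigma$ is homeomorphic to the set $\sE$ of norm-one eigenfunctionals in $\dual{A}$ equipped with the relative weak-$\ast$ topology, so $\Sigma$ is second countable as a subspace of a second countable space.

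To pass second countability from $\Sigma$ to $G$: by Lemma~\ref{znb}, the bundle $q:\Sigma \to G$ is locally trivial, and in particular the quotient map $q$ is open. An open continuous surjection from a second countable space has second countable image (a countable base downstairs is produced by taking images of a countable base upstairs), so $G$ is second countable. With all hypotheses verified, \cite[Theorem~3.1]{BarlakLi} applies to $C^*_r(\Sigma;G)\cong A$ and yields the UCT.

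The only conceptual point requiring care is the second countability of $G$; this is not explicitly stated in Section~\ref{RI} but follows cleanly from the construction of $\Sigma$ inside $\dual{A}$. Every other hypothesis (nuclearity, \'etale, Hausdorff, locally compact) is either assumed or already established in the text.
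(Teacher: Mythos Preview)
Your argument is correct and follows the same route the paper indicates: apply Theorem~\ref{mainthm1} to realize $A$ as $C^*_r(\Sigma;G)$ and then invoke \cite[Theorem~3.1]{BarlakLi}. The paper states this as a bare corollary without proof, so you have in fact supplied the one nontrivial detail the paper leaves implicit, namely second countability of $G$. Your deduction of that from separability of $A$ via the weak-$*$ topology on the unit ball of $\dual{A}$ is clean and correct.

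One minor citation slip: you invoke Lemma~\ref{UpsDef} for the identification of $\Sigma$ with $\sE\subseteq\dual{A}$, but that lemma lives in Section~\ref{RD} and concerns the reverse construction (starting from a given twist). In the setting of Theorem~\ref{mainthm1} the groupoid $\Sigma$ is \emph{by construction} the set $\sE$ of norm-one eigenfunctionals with the relative weak-$*$ topology; see Lemma~\ref{eigenprop} and Notation~\ref{gpoidftnals}. So the correct reference is to Section~\ref{RI}, not Lemma~\ref{UpsDef}. The mathematics is unchanged.
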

\end{example}


\appendix
\section{Nonabelian groups} In the previous sections we assumed we had
gradings by discrete abelian groups and actions by the dual group, as
this case is familiar and doesn't involve the introduction of
coactions.  However all of the results of the paper can be extended to
gradings by nonabelian groups, by replacing actions with coactions: in
this short appendix we outline how to extend our results to the
nonabelian case for those readers already familiar with coactions.
For those readers interested in more information on coactions we
recommend \cite[Appendix~A]{EKQR05}.

Throughout this appendix, all tensor products of $C^*$-algebras are
spatial tensor products.

\stepcounter{equation}\textit{(\theequation) Uses of Commutativity.}
The alert reader will no doubt have noticed that we used the
commutativity of $\Gamma$
in a few key places:
\begin{enumerate}
\item to define a $\Gamma$ grading on a $C^*$-algebra $A$ when an
action of $\hat\Gamma$ on $A$ is given;
\item to define maps $\Phi_t: A\to A_t$, including the faithful
conditional expectation $\Phi_0$ onto $A_0$ the fixed point algebra
(Lemma~\ref{At prop});
\item to define an action of $\hat\Gamma$ on $\cpair$ where
$c:G;\Sigma\mapsto \Gamma$ (Lemma~\ref{hatgammaact}); and
\item to show the fixed point algebra of the action above contains
$C_0(\go)$ as a Cartan subalgebra (Proposition~\ref{fixedcartan}).
\end{enumerate}

Now suppose that $\Gamma$ is a not necessarily abelian discrete
group whose identity we denote by $e$.
For $s, t\in \Gamma$, we will use $\delta_s$ to denote the
indicator function of the set $\{s\}$ and $\delta_{s,t}$ for the
Kronecker $\delta$ (so $\delta_{s,t}=1$ if $s=t$ and $0$ if $s\neq
t$). Let $\Lambda:C^*_r(\Gamma)\to
B(\ell^2(\Gamma))$ be the left regular representation of $\Gamma$.
Also, the map $\delta_s\mapsto \delta_s\otimes \delta_s\in
C^*_r(\Gamma)\otimes C^*_r(\Gamma)$ extends to a $*$-homomorphism
$\coact_\Gamma: C^*_r(\Gamma)\rightarrow C^*_r(\Gamma)\otimes C^*_r(\Gamma)$.

Let $\coact: A\to M(A\otimes C^*_r(\Gamma))$ be a (reduced) coaction.  This means that $\coact$ is a
non-degenerate $*$-homomorphism of $A$ into $M(A\otimes
C^*_r(\Gamma))$ such that
\begin{enumerate}
  \item[(i)] $\coact(A)(I\otimes C^*_r(\Gamma))\subseteq A\otimes
    C^*_r(\Gamma)$; and 
    \item[(ii)]  $(\coact
      \otimes\text{id}_\Gamma)\circ\coact =(\text{id}_A\otimes
      \coact_\Gamma)\circ\coact$ (these maps belong to 
      $\mathcal{B}(A,  M(A\otimes
      C^*_r(\Gamma)\otimes C^*_r(\Gamma)))$.
    \end{enumerate}
   Notice that since $\Gamma$ is discrete, $I\otimes \delta_e$ is the
   identity of $M(A\otimes C^*_r(\Gamma))$.  Thus
   condition (i) implies that actually, 
\[\coact: A\rightarrow A\otimes C^*_r(\Gamma).\]
Furthermore, the fact that $\Gamma$ is discrete implies that $\coact$  is non-degenerate in the sense that $\coact(A)(I\otimes C^*_r(\Gamma))$ is dense in $A\otimes C^*_r(\Gamma)$, see~\cite{BS89} or \cite[Remark~A.22(3)]{EKQR05}. \par 
For $t\in \Gamma$, there is a slice map  $S_t:
A\otimes C^*_r(\Gamma)\to A$ characterized by
\[ a\otimes b\mapsto a\langle \Lambda(b)\delta_e, \delta_t\rangle,
\] (see \cite{Tom67} or \cite[\S A.4]{EKQR05}).  Further, it follows
from the second statement of~\cite[Lemma~A.30]{EKQR05} that for $x\in
A\otimes C^*_r(\Gamma)$, \[x=0 \Leftrightarrow \text{ for all $t\in
    \Gamma$, } S_t(x)=0.\]  Thus, every element $x\in A\otimes
C^*_r(\Gamma)$ has a uniquely determined ``Fourier series'',
\[x\sim \sum_{t\in \Gamma} S_t(x)\otimes \delta_t.\]  
Define continuous maps $\Phi_t: A \rightarrow A$ by
\[\Phi_t:=S_t\circ \coact.\] Then for every $a\in A$,
\begin{equation}\label{f.s.r.}
  \coact(a)\sim \sum_{t\in \Gamma} \Phi_t(a)\otimes \delta_t.
\end{equation}
While we will not need this fact here, the $*$-homomorphism
property of  $\coact$ and the  series
representation~\eqref{f.s.r.} implies that the ``coefficient maps''
$\{\Phi_t\}_{t\in\Gamma}$ behave much as Fourier
coefficients do under convolution 
multiplication and adjoints:  for every $t\in \Gamma$
and $a, b\in A$,
\[\Phi_t(ab)=\sum_{s\in
    \Gamma}\Phi_{ts^{-1}}(a)\Phi_s(b)\dstext{and}\Phi_t(a^*)=
  \Phi_{t^{-1}}(a)^*.\] 

What we do require is that condition (ii) in the definition of
coaction given above implies that for
every $s,t\in \Gamma$ and $a\in A$,
\[\Phi_s(\Phi_t(a))=\begin{cases} 0& \text{if $s\neq t$,}\\ \Phi_t(a)
    & \text{when $s=t$.}
  \end{cases} 
\]

Define
\[ A_t:=\Phi_t(A). \] Note that as $S_e$ arises from the (faithful) trace on
$C^*_r(\Gamma)$, $\Phi_e: A\to A_e$ is a faithful
conditional expectation.

We will call an element $a\in A$ \textit{homogeneous} if $a\in A_t$
for some $t\in \Gamma$.
This gives us the $\Gamma$-grading and an analog of Lemma~\ref{At
  prop}, which addresses the first two points of Paragraph~A.1.

We now address the third and fourth items of Paragraph~A.1.  Assume we
have a twist with a cocycle as in Section~\ref{CoT} but with $\Gamma$
not necessarily abelian:
\begin{equation}\label{commNC}\xymatrix{ \T\times \go\ar[r]& \Sigma
\ar[dr]_{c_{_\Sigma}} \ar[r] & G \ar[d]^{c_{_G}}\\ & &
\Gamma. }
\end{equation}
The proof of
\cite[Lemma~6.1]{CRST} goes through without change to show there
exists a coaction
\[\coact: \cpair \to \cpair \otimes C^*_r(\Gamma)\] characterized by
\[ \coact(f)=f\otimes \delta_t \quad\text{where $f\in C_c(G)$ and
$\supp(f)\subseteq c_G\inv(t)$}.
\] Note that for $f\in C_c(G)$ and $\supp(f)\subseteq c_G\inv(t)$,
\[ \Phi_s(f)=S_s(f\otimes \delta_t)=f \langle
\Lambda(\delta_t)\delta_e, \delta_s\rangle=f\langle
\delta_t,\delta_s\rangle =f \delta_{s,t}.
\]

Now for $f\in C_c(G)$ then $f=\sum_{t\in \Gamma} f|_{c_G\inv(t)}$ so
that the above computation show that
\[\Phi_s(f)=f|_{c_G\inv(s)}.\] By continuity of $\Phi_s$ and the map
$j: \cpair \to C_0(\Sigma; G)$ we get that
$j(\Phi_s(a))=j(a)|_{c_G\inv(s)}$.
As in Section~\ref{CoT}, let $R=c\inv_G(e)$ and $\Rho=c\inv_\Sigma(e)$.
Then Lemma~\ref{smalltwist} goes through without change and if $a\in
\cpair_e$ and a net $f_i\in \cpair$ with $f_i\to a$, by the above we have
$\Phi_e(f_i)\to a$ and $\Phi_e(f_i)=f_i|_R\in C_c(R;\Rho)$, so that
$a\in C^*(R;\Rho)$, giving Proposition~\ref{fixedcartan}.

\begin{thm}\label{mainthm1NA} Let $A$ be a $\cs$-algebra and let $D$
be an abelian $\cs$-algebra of $A$ such that: 
\begin{itemize}
\item there is a coaction $\coact$ of a discrete group $\Gamma$ on $A$;
\item $D$ is Cartan in the algebra $A_e$; and
\item $\ol{\spn} \ N_h(A,D) =A$.
\end{itemize}  
Let $A_c$ be the algebraic span of $N_h$.  Then
there exists a $\Gamma$-graded twist $\T\times X\hookrightarrow
\Sigma\to G$ and a $*$-isomorphism $\Psi: A_c\to C^*_r(\Sigma;G)$ which induces an isomorphism
$A\to C^*(\Sigma;G)$ taking $D$ to $C_0(\go)$.\end{thm}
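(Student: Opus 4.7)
The plan is to run the argument of Section~\ref{RI} essentially verbatim, substituting the coaction-based machinery of the appendix for the dual-group action used there. Since the appendix has already established the $\Gamma$-grading $A_t = \Phi_t(A)$, that $\Phi_e \colon A \to A_e$ is a faithful conditional expectation, that $A_tA_s \subseteq A_{ts}$ and $A_t^* = A_{t^{-1}}$, and that each $A_t$ is a closed $D$-bimodule, the first step is simply to set $\Delta := E \circ \Phi_e \colon A \to D$. Because $E$ and $\Phi_e$ are both faithful, so is $\Delta$, and this is essentially the only analytic property of $\Delta$ used in the abelian argument.

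With $\Delta$ in hand, I form $\sG := \{(n,x) \in N_h \times X : (n^*n)(x) \neq 0\}$ and the equivalence relations $\sim_\Sigma$, $\sim_G$ of Definition~\ref{equivrelR}, producing $\Sigma := \sG/\!\sim_\Sigma$ and $G := \sG/\!\sim_G$.  I then verify that Lemma~\ref{exists h}, Lemma~\ref{A0}, Proposition~\ref{ndel}, and Proposition~\ref{R=D} all carry over without change, as their proofs use only (i) faithfulness of $\Delta$, (ii) the $D$-bimodule structure of each $A_t$, (iii) the multiplicative rule $A_tA_s \subseteq A_{ts}$, and (iv) density of $\spn N_h$ in $A$ (now a hypothesis of the theorem rather than a consequence of Proposition~\ref{NhDense}, which relied on an integral over $\hat\Gamma$).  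As in Section~\ref{RI}, I identify $\Sigma$ with the space $\sE$ of eigenfunctionals $\psi_{(n,x)}(a) = \Delta(n^*a)(x)/|n|(x)$ and $G$ with $|\sE|$; both inherit Hausdorff topologies from the weak-$*$ topology on $\dual A$.  The groupoid operations $\psi_{(m, n.x)}\psi_{(n,x)} = \psi_{(mn,x)}$ and $\psi_{(n,x)}^{-1} = \psi_{(n^*, n.x)}$ endow them with the structure of a topological twist $\bbT \times X \hookrightarrow \Sigma \twoheadrightarrow G$ with $G$ \'etale, by the proof of Proposition~\ref{G top1}.  Setting $c_\Sigma([n,x]_\Sigma) = c_G([n,x]_G) = \gr(n)$, where $\gr(n) = t$ for $n \in A_t$ (well-defined by linear independence of the $A_t$, which is implicit in Lemma~\ref{A0}), yields the required continuous grading homomorphisms.

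For the isomorphism, I define $\Psi(a)([n,x]_\Sigma) = \psi_{(n,x)}(a)$ and argue as in Lemma~\ref{dense subalgebra} and Lemma~\ref{isomPsi} that $\Psi$ carries $A_c$ bijectively onto $C_c(\Sigma; G)$, is a $*$-algebra map, and is isometric for the reduced norm, and hence extends to a $*$-isomorphism $A \to C^*_r(\Sigma; G)$ carrying $D$ onto $C_0(\unit G)$.  The main obstacle, and the point where the nonabelian character intervenes in a nontrivial way, is the final step: showing that $\Psi$ intertwines the given coaction $\coact$ on $A$ with the canonical coaction $\coact'$ on $C^*_r(\Sigma; G)$ induced by the cocycle $c_G$ (whose existence is the content of~\cite[Lemma~6.1]{CRST} as noted in the appendix).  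For a homogeneous normalizer $n \in A_t$, the image $\Psi(n)$ is supported on the bisection $Z(n) \subseteq c_G^{-1}(t)$, so $\coact'(\Psi(n)) = \Psi(n) \otimes \delta_t$, while $(\Psi \otimes \id)(\coact(n)) = \Psi(\Phi_t(n)) \otimes \delta_t = \Psi(n) \otimes \delta_t$.  Linearity in $a$, density of $\spn N_h$ in $A$, and continuity of both coactions then extend this intertwining to all of $A$, completing the proof.
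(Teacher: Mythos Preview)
Your proposal is correct and follows exactly the paper's approach: the paper's own proof is the single sentence ``The arguments of Section~\ref{RI} go through without change,'' and you have simply written out what that entails, correctly noting that density of $\spn N_h$ is now a hypothesis rather than a consequence of Proposition~\ref{NhDense}. Your final paragraph on coaction equivariance is correct but actually proves more than the theorem statement asks for (compare with Theorem~\ref{mainthm1}, where $\hat\Gamma$-covariance is part of the conclusion, versus the present statement, where it is not).
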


\begin{proof}
The arguments of Section~\ref{RI} go through without change.
\end{proof}

\begin{remark} Compare the conditions of Theorem~\ref{mainthm1NA} to the definition of \gc\ (\ref{defGammaCartan}), noting that by Lemma~\ref{nondeg}~(1) the conditions on the normalizers coincide when $\Gamma$ is abelian.   While we have not checked details, we expect that if $\Gamma$ is a (not necessarily abelian) discrete group, and the hypotheses of Theorem~\ref{mainthm1NA} are weakened so that the condition $\overline{\spn N_h(A,D)}=A$ is replaced with $\overline{\spn N(A,D)}=A$, then it is still true that  $\overline{N_h(A,D)}=A$.  If this is the case, a $\Gamma$-Cartan pair could then be defined to be an inclusion of $C^*$-algebras $D\subseteq A$ satisfying the weakened hypotheses of Theorem~\ref{mainthm1NA}.  All the results of this paper would be valid for this notion of $\Gamma$-Cartan pairs.  \end{remark}

Likewise, the arguments of Sections~\ref{CoT} and~\ref{RD} yield the
following result for (possibly non-commutative) discrete groups $\Gamma$. 
\begin{thm}\label{mainthm2NA} Let $\Sigma\to G$ be a locally trivial
twist, with a cocycle $c$ into a discrete group $\Gamma$.  Then
$(C_r^*(\Sigma;G), C_0(\unit{G}))$ is a $\Gamma$-Cartan pair.  Let
$\Sigma_1\to G_1$ be the $\Gamma$-graded
twist constructed from $(C_r^*(\Sigma;G), C_0(\unit{G}))$.
For each $\gamma\in \Sigma$, choose a homogeneous normalizer $n\in C^*_r(\Sigma,G)$ such
that $n(\gamma)>0$.  Then the map
\[ \Upsilon_\Sigma: \Sigma\to \Sigma_1\dstext{given by} \gamma\mapsto
[n ,s(\gamma)]_{\Sigma_1}
\]
descends to a well-defined isomorphism of twists such that the
following diagram commutes.

\begin{equation*}\xymatrix{ & \Sigma
\ar[rr]^q\ar[dr]^{c_{_\Sigma}}\ar[dd]_{\Upsilon_\Sigma} & &
G\ar[dl]_{c_{_G}}\ar[dd]^{\Upsilon_G}\\ \T\times X
\ar[ur]^\iota \ar[dr]_{\iota_1}& & \Gamma&\\ &
\Sigma_1\ar[ur]_{c_{\Sigma_1}}\ar[rr]_{q_1} & &
G_1\ar[ul]^{c_{G_1}} }
\end{equation*}

\end{thm}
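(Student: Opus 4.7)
The plan is to parallel the proof of Theorem~\ref{mainthm2}, substituting the coaction machinery introduced earlier in this appendix for the $\hat\Gamma$-action used in the abelian case. There are three phases: verify the hypotheses of Theorem~\ref{mainthm1NA} for $(C^*_r(\Sigma;G), C_0(\unit{G}))$ so that a twist $\Sigma_1\to G_1$ is produced; define $\Upsilon_\Sigma$ and show it is a topological groupoid isomorphism; and check the commutativity of the diagram with the cocycles.

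For the first phase, the coaction $\coact$ of $\Gamma$ on $C^*_r(\Sigma;G)$ was constructed so that for $f\in C_c(\Sigma;G)$ with $\supp(f)\subseteq c_G^{-1}(t)$, we have $\coact(f)=f\otimes \delta_t$. Applying the slice map $S_t$ yields $\Phi_t(f)=f|_{c_G^{-1}(t)}$, and by continuity $j(\Phi_t(a))=j(a)|_{c_G^{-1}(t)}$ for every $a\in C^*_r(\Sigma;G)$. This computation gives the non-abelian analog of Proposition~\ref{fixedcartan}: $A_e = C^*_r(\Rho;R)$ where $R=c_G^{-1}(e)$ and $\Rho=q^{-1}(R)$. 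Under the standing assumption that $R$ is \'etale and effective, Renault's theorem gives that $C_0(\unit{G})$ is Cartan in $A_e$. For density of $N_h$, the proof of Lemma~\ref{opensupp a} goes through with only cosmetic changes to show that $a\in N_h$ is homogeneous of degree $t$ precisely when $\supp(a)$ is an open bisection contained in $c_G^{-1}(t)$; since $C_c(\Sigma;G)$ is densely spanned by sections supported on such bisections (by a partition-of-unity argument together with the \'etale property of $G$), $\overline{\spn}\,N_h=C^*_r(\Sigma;G)$.

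For the second phase, Theorem~\ref{mainthm1NA} applies and produces $\Sigma_1\to G_1$ together with an isomorphism $A\cong C^*_r(\Sigma_1;G_1)$. Identifying $\Sigma_1$ with the set $\mathcal{E}$ of eigenfunctionals as in Section~\ref{RI}, I would define $\Upsilon_\Sigma(\gamma)=\epsilon_\gamma$, where $\epsilon_\gamma$ is the evaluation-at-$\gamma$ functional supplied by Proposition~\ref{r4.2}. The key identification is the analog of Lemma~\ref{UpsDef}: for any homogeneous normalizer $n$ with $n(\gamma)>0$ (which exists by choosing $n$ supported on a small homogeneous bisection containing $\dot\gamma$), the support $\supp(n)$ is a bisection, so $\dot\gamma$ is the unique element of $\supp(n)$ with source $s(\gamma)$. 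Then for any homogeneous $m$,
\[
  \psi_{(n,s(\gamma))}(m)=\frac{\Delta(n^*m)(s(\gamma))}{|n|(s(\gamma))}
  =\frac{\overline{n(\gamma)}m(\gamma)}{|n(\gamma)|}=\epsilon_\gamma(m),
\]
so $\epsilon_\gamma=\psi_{(n,s(\gamma))}$ and the map is well defined. Surjectivity, injectivity, and the homeomorphism property follow verbatim from the proof of Lemma~\ref{UpsDef}, as do the groupoid-homomorphism and $\bbT$-equivariance properties. Because $\Upsilon_\Sigma$ intertwines the $\bbT$-actions, it descends to a homeomorphism $\Upsilon_G:G\to G_1$ via $\Upsilon_G(\dot\gamma):=q_1(\Upsilon_\Sigma(\gamma))$.

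For the third phase, the cocycle compatibility $c_{\Sigma_1}\circ\Upsilon_\Sigma=c_\Sigma$ (and similarly for $c_{G_1}$) is immediate from the definitions: a homogeneous normalizer with $n(\gamma)>0$ is supported in $c_G^{-1}(c_\Sigma(\gamma))$ by the analog of Lemma~\ref{opensupp a}, so by definition of $c_{\Sigma_1}$ on $\Sigma_1$, we have $c_{\Sigma_1}(\psi_{(n,s(\gamma))})=\gr(n)=c_\Sigma(\gamma)$. The hard part of the whole argument is the first phase: one has to verify that each step in Sections~\ref{CoT} and~\ref{RI} that appealed to the $\hat\Gamma$-action and the Fourier-analytic Lemma~\ref{AtDense} can instead be driven by the slice maps $\Phi_t=S_t\circ\coact$ and the faithfulness of the conditional expectation $\Phi_e$, together with the fact that elements of $A\otimes C^*_r(\Gamma)$ are determined by their slices. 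Once this translation is in place, the remainder of the proof of Theorem~\ref{mainthm2} transcribes essentially unchanged to the non-abelian setting.
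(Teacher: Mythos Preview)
Your proposal is correct and follows exactly the approach the paper takes: the paper's proof of Theorem~\ref{mainthm2NA} consists solely of the sentence ``Likewise, the arguments of Sections~\ref{CoT} and~\ref{RD} yield the following result,'' and you have simply unpacked what that transcription looks like in detail. Your three-phase outline (verifying the $\Gamma$-Cartan hypotheses via the coaction and slice maps $\Phi_t$, defining $\Upsilon_\Sigma$ via $\eps_\gamma$ as in Lemma~\ref{UpsDef}, and checking cocycle compatibility) is precisely the content of those two sections specialized to the non-abelian setting, so there is nothing to add.
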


\bibliographystyle{amsplain}


\end{document}